\documentclass[12pt,twoside,reqno]{amsart}
\linespread{1.05}
\usepackage[colorlinks=true,citecolor=blue]{hyperref}
\usepackage{mathptmx, amsmath, amssymb, amsfonts, amsthm, mathptmx, enumerate, color}
\setlength{\textheight}{23cm}
\setlength{\textwidth}{16cm}
\setlength{\oddsidemargin}{0cm}
\setlength{\evensidemargin}{0cm}
\setlength{\topmargin}{0cm}
\usepackage{graphicx}
\usepackage{epstopdf}

\newtheorem{theorem}{Theorem}[section]
\newtheorem{corollary}{Corollary}[section]
\newtheorem{lemma}{Lemma}[section]
\newtheorem{proposition}{Proposition}[section]
\theoremstyle{definition}
\newtheorem{definition}{Definition}[section]

\newtheorem{remark}{Remark}[section]

\numberwithin{equation}{section}
\begin{document}
\setcounter{page}{1}

\vspace*{1.0cm}
\title[Directional Differentiability of Metric Projection ]
{Directional Differentiability of the Metric Projection Operator in Hilbert Spaces and Hilbertian Bochner Spaces}
\author[J. Li, L. Cheng, L. Liu, L. Xie]{Jinlu Li$^{1}$, Li Cheng$^{2}$, Lishan Liu$^{3}$, Linsen Xie$^{2,*}$}
\maketitle
\vspace*{-0.6cm}

\begin{center}
{\footnotesize {\it

$^1$Department of Mathematics, Shawnee State University, Portsmouth, Ohio 45662 USA\\
$^2$Department of Mathematics, Lishui University, Lishui 323000, Zhejiang, People's Republic of China\\
$^3$School of Mathematical Sciences, Qufu Normal University, Qufu 273165, Shandong,\\ People's Republic of China

}}\end{center}

\vskip 4mm {\small \noindent {\bf Abstract.}
Let $H$ be a real Hilbert space and $C$ a nonempty closed and convex subset of $H$. Let $P_C: H\rightarrow C$ denote the (standard) metric projection operator. In this paper, we study the G$\hat{a}$teaux directional differentiability of $P_C$ and investigate some of its properties. The G$\hat{a}$teaux directionally derivatives of $P_C$ are precisely given for the following cases of the considered subset $C$: 1. closed and convex subsets; 2. closed balls; 3. closed and convex cones (including proper closed subspaces). For special Hilbert spaces, we consider directional differentiability of $P_C$ for some Hilbert spaces with orthonormal bases and the real Hilbert space $L^2([-\pi,\pi])$ with the trigonometric orthonormal basis. \\

\vskip 1mm \noindent {\bf Keywords.}
Hilbert space; Hilbertian Bochner space; Metric projection operator; Directional differentiability of the metric projection operator. }

\renewcommand{\thefootnote}{}
\footnotetext{ $^*$Corresponding author.
\par
E-mail addresses:jli@shawnee.edu(J.Li), chenglilily@126.com(L.Cheng),
mathlls@163.com(L.Liu),\\
 linsenxie@163.com(L.Xie).
\par
Received August 10, 2023; Accepted September 12, 2023. }

\section{Introduction and preliminaries}
\noindent{{{1.1. \bf The convexity and smoothness of Hilbert spaces.}}} Let $H$ be a real Hilbert space with norm $\|\cdot\|$ and inner product $\langle\cdot,\cdot\rangle$. We list some notations below used in this paper. For any $u,v \in H$ with $u \neq v$, we write

(a) $\overline{v,u}=\{tv+(1-t)u: 0\leq t\leq1\}$, a closed line segment in $H$ with ending points at $u$ and  $v;$

(b) $\overrightarrow{v,u} = \{v+t(u-v): 0\leq t<\infty\},$ a closed ray in $H$ with ending point at $v$ and with direction $u-v;$

(c)  $ \overleftrightarrow{v, u} = \{v+t(u-v): -\infty < t<\infty\},$ a line in $H$ passing through points $v$ and $u$;

(d) $B = \{x\in H:\ \|x\|\leq1\},$ the closed unit ball in $H$;

(e) $S = \{x\in H:\ \|x\|=1\},$ the (closed) unit sphere in $H. $

Hilbert space $H$ is a special case of uniformly convex and uniformly smooth Banach space. Let $\delta$ denote the modules of convexity of $H$, which is the function $\delta: [0, 2] \rightarrow [0, 1]$ defined by
$$\delta(\epsilon) =\inf\left\{1-\|\frac{x+y}{2}\|:x,y\in S,\|x-y\|\geq\epsilon\right\},\ \text{for any} \ \epsilon \in [0, 2].$$
The modules of convexity $\delta$ of a Hilbert space $H$ has the following analytic representation (see Lemma 4.2 in [3] by Baumeister).
\begin{equation}\label{1.1} \delta(\epsilon) = 1- \sqrt{1-\frac{1}{4}\epsilon^2},\ \text{for any} \ \epsilon \in [0, 2]. \end{equation}
Let $\rho$ denote the modules of smoothness of the Hilbert space $H$, defined by
$$\rho(t) = \sup \left\{\frac{\|x+y\|+\|x-y\|}{2}-1: x\in S,\ y\in H, \|y\|=t\right\},\ \text{for} \ t>0.$$
The modules of smoothness $\rho$ of Hilbert space $H$ has the following analytic representation (see Corollary 4.17 in [3] by Baumeister):
\begin{equation}\label{1.2}\rho(t) = \sqrt{1+t^2} -1,\ \text{for} \ t > 0.\end{equation}
The norm $\|\cdot\|$ of $H$ is G$\hat{a}$teaux directionally differentiable. More precisely speaking, that is
\begin{equation}\label{1.3}\lim_{t\downarrow0}\frac{\|x+tv\| -\| x\|}{t } = \langle x,v\rangle,\ \text{for any} \ (x,v)\in S\times S.\end{equation}

\noindent{{{1.2. \bf The metric projection onto closed and convex subsets in Hilbert spaces.}}}
Let $C$ be a nonempty closed and convex subset of $H$ and let $P_C: X\rightarrow C$ denote the (standard) metric projection operator. That is, for any $x\in H$, $P_Cx\in C$ such that
\begin{equation}\label{1.4}  \|x-P_Cx\|\leq\| x-z\|,\ \text{for all}\ z\in C.\end{equation}
$P_Cx$ is called the metric projection of point $x$ onto $C$. $P_Cx$ is considered as the best approximation of $x$ by elements of $C$, which is the nearest point from $x$ to $C$. The distance function on $H$ with respect to $C$ is defined by
$$d(x, C) =\| x-P_Cx\|, \ \text{for any}\ x\in H.$$
In uniformly convex and uniformly smooth Banach spaces, the metric projection $P_C$ has many useful and well-known properties, which are particular hold for Hilbert spaces. We list some of them below as a proposition without proof. See [1, 11] for more details and proofs.

\begin{proposition}
	Let $C$ be a nonempty closed and convex subset of a Hilbert space $H$ and let $P_C: X\rightarrow C$ be the metric projection. Then, we have
	
	(i)\ 	Every nonempty closed and convex subset $C$ in $H$ is a Chebyshev set in $H$. That is, for  any $x\in H$, there is a unique point $P_Cx\in C$ satisfying \eqref{1.4};
	
	(ii)\ The basic variational principle: for any $x\in H$ and $u\in C$,
	\begin{equation}\label{1.5} u = P_Cx \Leftrightarrow \langle x-u, u-z\rangle\geq 0,\ \text{ for all}\ z\in C;       \end{equation}
	
	(iii)\ For any $x\in H$ and $u\in C$,
	\begin{equation}\label{1.6}    u = P_Cx  \Leftrightarrow \langle x-u, u-z\rangle \geq\|x-u\|^2, \ \text{ for all}\ z\in C;       \end{equation}
	
	(iv)\	$P_C $ is (strongly) monotone, that is
	\begin{equation}\label{1.7} \langle P_C x-P_C y, x-y \rangle \geq\|P_Cx-P_Cy\|^2, \ \text{ for all}\ x, y\in H;        \end{equation}
	
	(v)\	Nonexpansiveness:
	\begin{equation}\label{1.8} \|P_Cx-P_Cy\| \leq\|x-y\|,\ \text{ for any} \ x, y\in H;                               \end{equation}
	
	More precisely speaking (see Problem 7 on page 114 in [3]), for any $x, y \in H$,
	$$\| P_Cx-P_Cy \|<\| x-y \| \ \text{ or}  \  P_Cx-P_Cy = x-y;$$
	
	(vi)\	The distance function $d(\cdot, C)$ is continuous on $H$.
\end{proposition}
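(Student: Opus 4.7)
The statement collects the standard Hilbert-space properties of the metric projection, so the plan is to organize the six items into three tiers. Tier one is the Chebyshev property (i), which is where the Hilbert-space structure does its real work. Tier two is the variational inequality (ii), a first-order optimality condition derived from (i) using convexity of $C$. Tier three comprises (iii)--(vi), which all follow from (ii) by essentially algebraic or elementary manipulations.

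For (i), I would prove existence by taking a minimizing sequence $\{z_n\}\subset C$ with $\|x-z_n\|\to d := \inf_{z\in C}\|x-z\|$. Applying the parallelogram identity to $x-z_n$ and $x-z_m$ and using that the midpoint $(z_n+z_m)/2\in C$ by convexity (so $\|x-(z_n+z_m)/2\|\geq d$), I obtain a Cauchy estimate on $\{z_n\}$. Completeness of $H$ together with closedness of $C$ then deliver a limit $u=P_Cx\in C$ with $\|x-u\|=d$. Uniqueness is immediate from the same parallelogram identity: two distinct minimizers would force their midpoint to have strictly smaller distance, contradicting the definition of $d$.

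For (ii), fix $z\in C$ and, since $C$ is convex, consider $\varphi(t)=\|x-((1-t)u+tz)\|^2$ on $[0,1]$. When $u=P_Cx$, $\varphi$ attains its minimum at $t=0$, so $\varphi'(0)\geq 0$, which unpacks to $\langle x-u,\,u-z\rangle\geq 0$. Conversely, expanding $\|x-z\|^2 = \|x-u\|^2 + 2\langle x-u,\,u-z\rangle + \|u-z\|^2$ shows that the variational inequality forces $\|x-z\|\geq \|x-u\|$ for every $z\in C$. Statement (iii) is then an algebraic rearrangement of (ii), since $\langle x-u,\,x-z\rangle = \|x-u\|^2 + \langle x-u,\,u-z\rangle$. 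For (iv), I apply (ii) with data $(x,\,u=P_Cx,\,z=P_Cy)$ and with $(y,\,P_Cy,\,P_Cx)$ and add the two inequalities; the cross terms combine to give the monotonicity bound. Nonexpansiveness (v) is a Cauchy--Schwarz application to (iv), and the strict dichotomy comes from the equality case in Cauchy--Schwarz, which forces $x-y$ to be a positive multiple of $P_Cx-P_Cy$ of the right length. Finally (vi) follows from the $1$-Lipschitz estimate $|d(x,C)-d(y,C)|\leq\|x-y\|$, obtained directly from the triangle inequality and the definition of the distance function.

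The main obstacle is the existence step of (i): in an infinite-dimensional setting a minimizing sequence in a closed convex set need not have any convergent subsequence a priori, and it is precisely the parallelogram identity --- a Hilbert-space feature not available in general Banach spaces --- that upgrades ``minimizing'' to ``Cauchy.'' Once the Chebyshev property and the variational inequality are in hand, the remaining items are routine bookkeeping.
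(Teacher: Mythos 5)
Your proof is correct, and it is the standard argument: the paper itself states this proposition explicitly ``without proof,'' deferring to the references [1, 3, 11], so there is no in-paper proof to compare against. Your decomposition --- parallelogram identity for the Chebyshev property, the one-sided derivative of $t\mapsto\|x-((1-t)u+tz)\|^2$ for the variational inequality, then (iii)--(vi) by algebra, summation, Cauchy--Schwarz, and the triangle inequality --- is exactly what those sources contain, and the edge cases (the equality case in Cauchy--Schwarz for the dichotomy in (v), the $z=u$ degenerate case) are handled adequately for a sketch.

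One point worth flagging: your identity $\langle x-u,\,x-z\rangle=\|x-u\|^2+\langle x-u,\,u-z\rangle$ proves the equivalence $u=P_Cx\Leftrightarrow\langle x-u,\,x-z\rangle\geq\|x-u\|^2$ for all $z\in C$, which is the intended content of (iii); as literally printed, (1.6) has $\langle x-u,\,u-z\rangle\geq\|x-u\|^2$, which fails at $z=u$ unless $x=u$ and is therefore a typo in the statement rather than a gap in your argument.
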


\noindent{{{1.3. \bf The G$\hat{\mathbf{a}}$teaux directional differentiability of the metric projection operator in Hilbert spaces.}}}
In Hilbert spaces, the G$\hat{a}$teaux directional differentiability of the metric projection operator has been studied by many authors (see [7-9, 23]. It has been applied to programming problems, optimal control problems, and so forth (see [14-15, 19]). We recall the definition, which is adopted by most of authors.

Let $C$ be a nonempty closed and convex subset of a Hilbert space $H$. For $x\in H$ and $v\in H$ with $v \neq \theta$, if the following limit exists, which is a point in $H$,
\begin{equation}\label{1.9}     \lim_{t\downarrow0}{\frac{P_C(x+tv)-P_C(x)}{t}}\end{equation}
then, $P_C$ is said to be (G$\hat{a}$teaux) directionally differentiable at point $x$ along direction $v$.

The concept of the G$\hat{a}$teaux directional differentiability of the metric projection operator has been extended from Hilbert spaces to Banach spaces and more general normed spaces (see [2, 4, 6, 14-20, 22]). In the case of Hilbert spaces, the basic variational principle and the nonexpansive property of $P_C$ play very important roles in the study of the G$\hat{a}$teaux directional differentiability of the metric projection operator. Since in the case of Banach spaces, the metric projection does not enjoy such basic variational principle (part (ii) in Proposition 1.1) and the nonexpansive property (part (iii) in Proposition 1.1), it will be more difficult to study the G$\hat{a}$teaux directional differentiability of the metric projection operator in Banach spaces.

In [12], the G$\hat{a}$teaux directional differentiability of the metric projection operator was studied in uniformly convex and uniformly smooth Banach spaces. The analytic solutions of the G$\hat{a}$teaux directionally derivatives of the metric projection operator are proved for some special closed convex subsets, such as closed balls, cones and subspaces. In this paper, we will investigate similar analytic solutions of the G$\hat{a}$teaux directionally derivatives of the metric projection operator in Hilbert spaces. Since Hilbert spaces are special cases of uniformly convex and uniformly smooth Banach spaces, the G$\hat{a}$teaux directionally derivatives of the metric projection operator are precisely presented.

\section{Properties of inverse images of the metric projection operator in Hilbert spaces}
Throughout this section, let $H$ be a Hilbert space and $C$ a nonempty closed and convex subset of $H$. For any $y\in C$, the inverse image of $y$ by the metric projection operator $P_C$ in $H$ is defined by
$$P_C^{-1}(y)=\{x\in H: P_C(x)=y\}.$$
The inverse mapping $P_C^{-1}$ has many useful properties and it plays a crucial role in the study of the G$\hat{a}$teaux directional differentiability of the metric projection operator in Hilbert spaces. In [11], many properties of the inverse images of the metric projection $P_C$ in uniformly convex and uniformly smooth Banach spaces have been proved, which were used in the studying of the G$\hat{a}$teaux directional differentiability of $P_C$ in uniformly convex and uniformly smooth Banach spaces in [12]. For Hilbert spaces, as special cases of uniformly convex and uniformly smooth Banach spaces, $P_C^{-1}$ has some special properties.

\begin{lemma}
	Let $C$ be a nonempty closed and convex subset of $H$. Then,
	
	(i) \ $H =\underset{y\in C}\bigcup{P_C^{-1}(y)},$
	
	(ii)\ For any $y\in C$, $P_C^{-1}(y)$ is either a singleton $\{y\}$, or a closed and convex cone in $H$ with  vertex at $y$.
\end{lemma}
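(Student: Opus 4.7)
The plan is to treat both parts as essentially direct consequences of the characterization of $P_C$ given by the basic variational principle in part (ii) of Proposition 1.1.

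For part (i), I would argue that since $C$ is a nonempty closed and convex subset of the Hilbert space $H$, Proposition 1.1(i) (the Chebyshev property) guarantees that every $x\in H$ admits a unique projection $P_C(x)\in C$. Therefore $x\in P_C^{-1}(P_C(x))\subseteq \bigcup_{y\in C}P_C^{-1}(y)$, giving $H\subseteq \bigcup_{y\in C}P_C^{-1}(y)$; the reverse inclusion is immediate from the definition of the preimage.

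For part (ii), I would fix $y\in C$ and invoke \eqref{1.5} with $u=y$ to rewrite the preimage analytically as
\begin{equation*}
P_C^{-1}(y)=\bigl\{x\in H:\langle x-y,\,y-z\rangle\geq 0\text{ for all } z\in C\bigr\}.
\end{equation*}
Since $y\in C$ implies $P_C(y)=y$, this set always contains $y$, so it is nonempty. The main observation is that each condition $\langle x-y,\,y-z\rangle\geq 0$ defines a closed half-space in the variable $x$ whose boundary hyperplane contains $y$; hence $P_C^{-1}(y)$ is the intersection of a family of closed half-spaces through $y$, which is automatically closed, convex, and closed under the operation $x\mapsto y+t(x-y)$ for every $t\geq 0$. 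Concretely, if $x_1,x_2\in P_C^{-1}(y)$ and $t_1,t_2\geq 0$, setting $w=y+t_1(x_1-y)+t_2(x_2-y)$ and using bilinearity of $\langle\cdot,\cdot\rangle$ yields
\begin{equation*}
\langle w-y,\,y-z\rangle = t_1\langle x_1-y,\,y-z\rangle + t_2\langle x_2-y,\,y-z\rangle \geq 0
\end{equation*}
for every $z\in C$, so $w\in P_C^{-1}(y)$. This shows $P_C^{-1}(y)$ is a closed convex cone with vertex at $y$; the dichotomy in the statement is just the observation that this cone may degenerate to the single point $\{y\}$ (which happens precisely when $y$ is an interior point of $C$ in the relevant sense).

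I do not expect any real obstacle: the whole argument is a straightforward repackaging of the variational principle \eqref{1.5}, and the only care needed is to record the half-space description carefully so that both the closedness and the cone/convexity properties drop out in one stroke rather than being verified separately.
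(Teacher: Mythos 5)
Your proof is correct, and for part (ii) it takes a somewhat different route from the paper's. Part (i) is identical: both arguments are just the Chebyshev property from Proposition 1.1(i). For part (ii), the paper splits the work into three pieces: it \emph{cites} Theorem 3.1 of [11] to get that $P_C^{-1}(y)$ is a closed cone with vertex at $y$, then verifies convexity by the same bilinearity computation you use, and finally verifies closedness separately by taking a sequence $x_n\to x$ in $P_C^{-1}(y)$ and passing to the limit in $\langle x_n-y,\,y-z\rangle\geq 0$. You instead write
$P_C^{-1}(y)=\bigcap_{z\in C}\{x\in H:\langle x-y,\,y-z\rangle\geq 0\}$
directly from \eqref{1.5} and read off closedness, convexity, and the cone property in one stroke from this half-space description. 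What your version buys is self-containedness (no appeal to the external cone result in [11]) and a single verification replacing three; what the paper's version buys is consistency with its Banach-space predecessor, where the preimage is still a closed cone by [11] but convexity genuinely fails, so isolating convexity as the one new Hilbert-space ingredient is the point being emphasized. Two trivial remarks: when $z=y$ your ``half-space'' is all of $H$, which is harmless; and your parenthetical identifying the degenerate case with ``interior point in the relevant sense'' is exactly the paper's Definition 2.1 of an internal point, so it is consistent but not needed for the proof.
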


\begin{proof}  By part (i) in Proposition 1.1, part (i) of this lemma is proved. For any $y\in C$, if $P_C^{-1}(y)$ is either a singleton $\{y\}$, then, by Theorem 3.1 in [11], $P_C^{-1}(y)$ is a closed cone with vertex at $y$ in $H$. We only need to prove that $P_C^{-1}(y)$ is convex. To this end, for any $x_1, x_2\in P_C^{-1}(y)$, for any $\alpha\in[0,1]$ and for any $z\in C$, we have
$$ \langle(\alpha x_1 + (1-\alpha)x_2)-y,y-z\rangle = \alpha\langle x_1-y,y-z\rangle+(1-\alpha)\langle x_2-y,y-z\rangle.$$
Then, the convexity of $P_C^{-1}(y)$ follows from the basic variational principle of $P_C$ in Hilbert space $H$ immediately.

Now we prove the closeness of $P_C^{-1}(y)$. Suppose $\{x_n\}\subseteq P_C^{-1}(y)$ and $x_n\rightarrow x,$ as  $n\rightarrow\infty.$ By the basic variational principle \eqref{1.5} in Proposition 1.1, for every $n$ and for every $z\in C$, we have
$$ \langle x_n-y, y-z\rangle \geq 0.$$
Since $x_n\rightarrow x,$ as $n\rightarrow\infty,$ this implies
$$ \langle x-y, y-z\rangle\geq0, \ \text{for all} \ z\in C.$$
By the basic variational principle \eqref{1.5} in Proposition 1.1 again, this implies $x\in P_C^{-1}(y)$.\end{proof}

However, in Theorem 3.1 in [11], it was proved that, in uniformly convex and uniformly smooth Banach spaces, $P_C^{-1}(y)$ is not convex, in general. Based on the two cases in part (ii) of Lemma 2.1, we classify the points in $C$ to two groups, which is defined in [12] for uniformly convex and uniformly smooth Banach spaces.

\begin{definition}
	 Let $C$ be a nonempty closed and convex subset in $H$. Let $y \in C$.
	
	(i)\ \ If $P_C^{-1}(y) ={y}$, then $y$ is called an internal point of $C$;
	
	(ii)\ If $P_C^{-1}(y) \supsetneq {y}$, then $y$ is called a cuticle point of $C$.
\end{definition}

The collection of all internal points of $C$ is denoted by $\mathbb{I}(C)$ and the collection of all cuticle points of $C$ is denoted by $\mathbb{C}(C)$. Then $\{\mathbb{I}(C), \mathbb{C}(C)\}$ forms a partition of $C$. More precisely, we have
$$\mathbb{I}(C)\cap\mathbb{C}(C)=\emptyset \ \text{and} \   \mathbb{I}(C)\cup\mathbb{C}(C) = C.$$

By the definitions of $\mathbb{I}(C)$ and $\mathbb{C}(C)$, they have the following characteristics.

(a)	$y\in\mathbb{I}(C)$ if and only if, for $u \in H$,
$$y=P_C(u+y)   \Rightarrow   u = \Theta;$$

(b)	$y\in\mathbb{C}(C)$ if and only if, there is $u\in H$ with $u \neq \Theta$ such that
$$y=P_C(u+y).$$

For an arbitrary fixed $r>0$, we respectively write the closed, open balls and sphere in $H$ with radius $r$ at center $c\in H$ by $B(c, r)$, $B^o(c, r)$ and $S(c, r)$. Then $B(c, r)$ is a closed and convex subset of $H$. In particular, as defined in section 1, when $c = \theta$ and $r = 1$, the unit closed ball and unit sphere are written by $B(\theta,1) = B$ and $S(\theta,1) = S.$ Since every Hilbert space is a uniformly convex and uniformly smooth Banach space, then, by the results in section 3 in [12], we respectively have

\begin{lemma}
(A new version of Lemma 3.5 in [12]).  Let $c\in H$ and $r>0.$ Then we have
\begin{equation}\label{2.1}
	P_{B(c,r)}(x) = c+\frac{r}{\|x-c\|}(x-c),  \ \text{for any} \ x\in H\backslash B(c, r).
\end{equation}
In particular, we have
$$P_B(x) = \frac{x}{\|x\|},  \ \text{for any} \ x\in H \ \text{with} \ \|x\|>1.$$
\end{lemma}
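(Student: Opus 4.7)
The plan is to verify the explicit formula directly via the basic variational principle of Proposition 1.1(ii), combined with the uniqueness of metric projection from Proposition 1.1(i). Specifically, set
$$u := c + \frac{r}{\|x-c\|}(x-c),$$
and show (a) $u \in B(c,r)$, and (b) $\langle x-u,\, u-z\rangle \geq 0$ for every $z \in B(c,r)$. By the variational characterization \eqref{1.5}, this forces $u = P_{B(c,r)}(x)$.

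For feasibility, a one-line calculation gives $\|u - c\| = \frac{r}{\|x-c\|}\|x-c\| = r$, so actually $u \in S(c,r) \subseteq B(c,r)$. For the variational inequality, I would exploit the fact that both $x - u$ and $u - c$ are positive scalar multiples of $x - c$. Indeed,
$$x - u = \left(1 - \frac{r}{\|x-c\|}\right)(x - c) = \lambda\,(x-c), \qquad \lambda := 1 - \frac{r}{\|x-c\|},$$
and since $x \notin B(c,r)$ we have $\|x-c\| > r$, so $\lambda > 0$. Writing $u - z = (u - c) - (z - c)$ for arbitrary $z \in B(c,r)$ then yields
$$\langle x - u,\, u - z \rangle = \lambda\bigl[\langle x-c,\, u-c\rangle - \langle x-c,\, z-c\rangle\bigr] = \lambda\bigl[r\|x-c\| - \langle x-c,\, z-c\rangle\bigr].$$

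The remaining step is to bound $\langle x-c,\,z-c\rangle$ from above. This is exactly where Cauchy--Schwarz enters: since $\|z-c\| \leq r$,
$$\langle x-c,\, z-c\rangle \leq \|x-c\|\cdot\|z-c\| \leq r\|x-c\|,$$
so the bracket above is nonnegative, and the factor $\lambda$ is positive. This establishes the variational inequality, and the conclusion follows. The special case $P_B(x) = x/\|x\|$ for $\|x\|>1$ is then a direct specialization with $c = \theta$ and $r=1$.

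There is no real obstacle here: every step is a short algebraic manipulation, and the only nontrivial estimate is a single application of Cauchy--Schwarz, which saturates precisely when $z$ lies on the ray from $c$ through $x$ at distance $r$, i.e., at $z = u$. This geometric intuition — that the nearest point on the ball is obtained by radial retraction to the sphere — is exactly what the proof formalizes.
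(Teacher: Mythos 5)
Your proof is correct in every step: the point $u = c + \frac{r}{\|x-c\|}(x-c)$ lies on $S(c,r)$, the difference $x-u$ is the positive multiple $\bigl(1-\frac{r}{\|x-c\|}\bigr)(x-c)$ of $x-c$ because $\|x-c\|>r$, the inner product $\langle x-c,u-c\rangle$ equals $r\|x-c\|$, and Cauchy--Schwarz bounds $\langle x-c,z-c\rangle$ by $r\|x-c\|$ for every $z\in B(c,r)$, so the variational inequality \eqref{1.5} holds and uniqueness of the projection (Proposition 1.1(i)) closes the argument. Your route is, however, genuinely different from the paper's: the paper offers no proof at all here, deriving the formula as an immediate specialization of Lemma 3.5 in reference [12], which establishes the analogous radial-retraction formula in general uniformly convex and uniformly smooth Banach spaces (where the variational characterization involves the duality mapping rather than the inner product, and Cauchy--Schwarz is unavailable in this simple form). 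What the paper's citation buys is economy and consistency with its overall program of specializing Banach-space results to the Hilbert setting; what your argument buys is a short, self-contained, and entirely elementary verification that makes the Hilbert-space case independent of the unpublished reference [12], and it also makes transparent the geometric content --- equality in Cauchy--Schwarz occurs exactly at $z=u$, identifying the radial retraction as the unique nearest point. Either is acceptable; yours is arguably the more informative for a reader of this paper.
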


\begin{proposition}
	 (A new version of Proposition 3.6 in [12]). Let $c\in H$ and $r>0$, we have
	
	 (i) \ $\mathbb{I}(B(c, r)) = B^o(c, r);$
	
	 (ii)\ $\mathbb{C}(B(c, r)) = S(c, r);$
	
	 (iii)\ For any $y\in S(c, r)$,
	 \begin{equation}\label{2.2} P_{B(c, r)}^{-1}(y) = \{y+t(y-c): 0 \leq t<\infty\}. \end{equation}
\end{proposition}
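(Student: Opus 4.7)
The plan is to prove all three parts simultaneously by leveraging the explicit projection formula in Lemma 2.2 together with the basic variational principle from Proposition 1.1(ii). Since $\{\mathbb{I}(B(c,r)), \mathbb{C}(B(c,r))\}$ is already known to partition $B(c,r) = B^o(c,r) \cup S(c,r)$, it suffices to establish the inclusions $B^o(c,r) \subseteq \mathbb{I}(B(c,r))$ and $S(c,r) \subseteq \mathbb{C}(B(c,r))$ to get (i) and (ii) at once. Part (iii) will then drop out by solving explicitly the preimage equation coming from Lemma 2.2.

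For the inclusion $B^o(c,r) \subseteq \mathbb{I}(B(c,r))$, I would take $y \in B^o(c,r)$ and suppose $y = P_{B(c,r)}(x)$ for some $x \in H$. By Proposition 1.1(ii), $\langle x-y, y-z\rangle \geq 0$ for every $z \in B(c,r)$. The key point is that since $y$ is interior, for sufficiently small $\epsilon > 0$ the test point $z = y + \epsilon(x-y)$ still lies in $B(c,r)$ (assuming $x \neq y$). Plugging this $z$ into the variational inequality yields $-\epsilon \|x-y\|^2 \geq 0$, forcing $x = y$. Hence $P_{B(c,r)}^{-1}(y) = \{y\}$, which by Definition 2.1 means $y \in \mathbb{I}(B(c,r))$.

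For the inclusion $S(c,r) \subseteq \mathbb{C}(B(c,r))$ and for part (iii) together, I would fix $y \in S(c,r)$ and determine every $x \in H$ with $P_{B(c,r)}(x) = y$. Split on whether $x \in B(c,r)$ or not. If $x \in B(c,r)$, then $P_{B(c,r)}(x) = x$, so $x = y$, giving the point $y = y + 0 \cdot (y-c)$ in the claimed set. If $x \in H \setminus B(c,r)$, Lemma 2.2 gives $P_{B(c,r)}(x) = c + \tfrac{r}{\|x-c\|}(x-c)$, so the equation $P_{B(c,r)}(x) = y$ becomes $x - c = \tfrac{\|x-c\|}{r}(y-c)$. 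Setting $t = \tfrac{\|x-c\|}{r} - 1 \geq 0$ (the inequality $\geq 0$ comes from $\|x-c\| \geq r$, with equality only when $x = y$), a direct rearrangement gives $x = y + t(y-c)$. Conversely, for every $t \geq 0$ the point $x_t := y + t(y-c)$ satisfies $\|x_t - c\| = (1+t)r$, and Lemma 2.2 confirms $P_{B(c,r)}(x_t) = y$. This establishes the equality in \eqref{2.2}, and taking $t > 0$ produces a point $x \neq y$ with $P_{B(c,r)}(x) = y$, which by Definition 2.1 puts $y$ in $\mathbb{C}(B(c,r))$.

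There is no real obstacle here; the argument is essentially a bookkeeping exercise once Lemma 2.2 is in hand. The only subtlety worth being careful about is the parametrization in part (iii): one must verify that the range $t \in [0, \infty)$ corresponds bijectively to $\|x-c\| \in [r, \infty)$ and that the case $x \in B(c,r)$ contributes only the vertex $y$, so that the two cases together exhaust $P_{B(c,r)}^{-1}(y)$ without overlap (beyond the shared endpoint $y$).
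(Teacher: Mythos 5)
Your argument is correct. Each step checks out: the interior test point $z=y+\epsilon(x-y)$ is legitimate because $\|y-c\|<r$ guarantees $z\in B(c,r)$ for small $\epsilon$, and it forces $-\epsilon\|x-y\|^2\geq 0$, hence $x=y$; the computation of the preimage of $y\in S(c,r)$ from the formula in Lemma 2.2 is a correct bijective reparametrization of $\|x-c\|\in[r,\infty)$ by $t\in[0,\infty)$; and the partition $\mathbb{I}(C)\sqcup\mathbb{C}(C)=C$ correctly upgrades the two inclusions to the equalities in (i) and (ii). The route, however, differs from the paper's: the paper offers no proof of this proposition at all, instead importing it wholesale from Proposition 3.6 of reference [12] on the grounds that a Hilbert space is a uniformly convex and uniformly smooth Banach space, where the corresponding statement is phrased via duality mappings and the more delicate machinery of that setting. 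Your proof is self-contained within the tools the paper actually states (Proposition 1.1(ii), Lemma 2.2, Definition 2.1), which buys transparency and independence from the unpublished reference [12]; what it gives up is only the generality of the Banach-space argument, which is irrelevant here since the proposition is stated for Hilbert spaces. One cosmetic point: in the case $x\in H\setminus B(c,r)$ you have $\|x-c\|>r$ strictly, so your $t$ is in fact positive there; the value $t=0$ arises only from the case $x\in B(c,r)$, exactly as your final remark about the shared endpoint indicates.
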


\begin{proposition}
 (A new version of Proposition 3.7 in [12]). Let $D$ be a proper closed subspace of a Hilbert space $H$. Then, we have

 (i)\ \	$\mathbb{I}(D) = \emptyset;$

 (ii)\	$\mathbb{C}(D) = D.$
\end{proposition}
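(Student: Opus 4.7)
The plan is to prove (i) and (ii) simultaneously by showing that every point $y\in D$ is a cuticle point. Since $\{\mathbb{I}(D), \mathbb{C}(D)\}$ forms a partition of $D$, the equality $\mathbb{C}(D) = D$ immediately forces $\mathbb{I}(D) = \emptyset$.

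The key ingredient is the orthogonal complement in the Hilbert space setting. Because $D$ is a \emph{proper} closed subspace of $H$, the orthogonal complement $D^\perp$ contains some nonzero vector; fix such a $u\in D^\perp$ with $u\neq\theta$. For an arbitrary $y\in D$, I would propose that $y + u$ witnesses $y$ being cuticle, i.e., I claim $P_D(y+u) = y$. By the basic variational principle \eqref{1.5}, this is equivalent to showing
\begin{equation*}
\langle (y+u) - y,\, y-z\rangle \geq 0,\quad \text{for all } z\in D.
\end{equation*}
The left-hand side simplifies to $\langle u, y-z\rangle$. Since $D$ is a subspace and $y,z\in D$, we have $y-z\in D$, and thus $\langle u, y-z\rangle = 0$ because $u\in D^\perp$. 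The inequality therefore holds trivially (with equality), so $y = P_D(y+u)$.

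Invoking characterization (b) immediately preceding Lemma 2.3 (the definition of cuticle points via the existence of a nonzero $u$ with $y=P_C(u+y)$), we conclude $y\in\mathbb{C}(D)$. As $y\in D$ was arbitrary, $D\subseteq\mathbb{C}(D)$, and combined with $\mathbb{C}(D)\subseteq D$ this gives (ii). Part (i) then follows from the partition identity $\mathbb{I}(D)\cup\mathbb{C}(D)=D$ with $\mathbb{I}(D)\cap\mathbb{C}(D)=\emptyset$.

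There is no serious obstacle in this argument: the whole proof is powered by the observation that properness of $D$ yields a nonzero element of $D^\perp$, and that subspace structure makes the variational inner product vanish identically. The only subtlety worth flagging is that "proper" is essential — without it, $D^\perp = \{\theta\}$ and the construction collapses, reflecting that when $D=H$ every point is trivially internal.
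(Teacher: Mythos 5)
Your argument is correct. Note, however, that the paper does not actually prove this proposition: it simply imports it as a specialization of Proposition 3.7 in [12], whose proof lives in the general setting of uniformly convex and uniformly smooth Banach spaces (where the variational characterization involves the normalized duality mapping rather than the inner product). Your route is genuinely different in that it is self-contained and purely Hilbertian: the orthogonal decomposition $H = D \oplus D^{\perp}$ hands you a nonzero $u \in D^{\perp}$ precisely because $D$ is proper and closed, and then $\langle (y+u)-y,\, y-z\rangle = \langle u, y-z\rangle = 0$ for all $z \in D$ verifies the variational principle \eqref{1.5} with equality, so $P_D(y+u)=y$ and every $y \in D$ is cuticle. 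What your approach buys is a two-line elementary proof that never leaves Hilbert space and makes transparent exactly where properness is used (it guarantees $D^{\perp}\neq\{\theta\}$, and in fact shows $P_D^{-1}(y) \supseteq y + D^{\perp}$, anticipating Proposition 2.4); what the paper's citation buys is consistency with the broader Banach-space framework of [11, 12] that the rest of Section 2 is built on. Your closing remark about the partition $\mathbb{I}(D)\cup\mathbb{C}(D)=D$ correctly reduces (i) to (ii), and the whole argument is sound.
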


\begin{definition}
	Let $D$ be a proper closed subspace of a Hilbert space $H$. We denote
	$$D^\bot = \{x\in H:\langle x,z\rangle = 0,\ \text{for all} \ z\in D\}.$$
	Then, $D^\bot$ is a closed and convex cone in $H$ with vertex at $\theta$. $D^\bot$ is called the orthogonal cone of $D$ in this Hilbert space $H$.
\end{definition}

However, notice that, in uniformly convex and uniformly smooth Banach spaces, by Corollary 2.8 in [11], $D^\bot$ is a closed cone, which is non-convex, in general.

\begin{proposition}
(A new version of Proposition 3.9 in [12]). Let $D$ be a proper closed subspace of a Hilbert space $H$. Then, we have

(i)\ $P_D^{-1}(\theta) = D^\bot;$

(ii)\ $y \in D  \Rightarrow P_D^{-1}(y) = y + P_D^{-1}(\theta) = y + D^\bot.$
\end{proposition}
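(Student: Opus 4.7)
The plan is to reduce both statements to direct applications of the basic variational principle (equation~\eqref{1.5} in Proposition~1.1(ii)), exploiting the fact that a subspace is closed under negation, so that the one-sided inequality from the variational characterization upgrades to a two-sided equality.

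For part (i), I would first unwind the definition: $x \in P_D^{-1}(\theta)$ means $P_D(x) = \theta$. Applying \eqref{1.5} with $u = \theta$, this is equivalent to $\langle x - \theta, \theta - z\rangle \geq 0$ for every $z \in D$, i.e. $\langle x, z\rangle \leq 0$ for every $z \in D$. Because $D$ is a subspace, $z \in D$ implies $-z \in D$; substituting $-z$ for $z$ in the inequality gives $\langle x, z\rangle \geq 0$ as well. Combining the two one-sided inequalities yields $\langle x, z\rangle = 0$ for all $z \in D$, which is precisely $x \in D^\bot$. The reverse inclusion is immediate: if $x \in D^\bot$, then $\langle x, \theta - z\rangle = 0 \geq 0$ for every $z \in D$, so \eqref{1.5} forces $P_D(x) = \theta$.

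For part (ii), I would fix $y \in D$ and apply \eqref{1.5} at $u = y$: the relation $x \in P_D^{-1}(y)$ is equivalent to $\langle x - y, y - z\rangle \geq 0$ for all $z \in D$. The key observation is that since $y \in D$ and $D$ is closed under subtraction, the map $z \mapsto y - z$ is a bijection of $D$ onto itself. Writing $w = y - z$ therefore recasts the condition as $\langle x - y, w\rangle \geq 0$ for all $w \in D$, and the same subspace-symmetry argument as in part (i) upgrades this to $\langle x - y, w\rangle = 0$ for all $w \in D$. Hence $x - y \in D^\bot$, i.e.\ $x \in y + D^\bot$. The chain of equalities $P_D^{-1}(y) = y + P_D^{-1}(\theta) = y + D^\bot$ then follows by combining with part (i).

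The whole content of the proposition is the observation that the one-sided inequality from the variational principle promotes to a two-sided equality exactly when the test set is closed under negation; I do not expect any genuine obstacle beyond careful bookkeeping of signs and the substitution $w = y - z$. Part (i) will be used as a small lemma inside part (ii) to supply the identification $P_D^{-1}(\theta) = D^\bot$.
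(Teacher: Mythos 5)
Your argument is correct. Both parts are sound: in (i) the variational principle \eqref{1.5} with $u=\theta$ gives $\langle x,z\rangle\le 0$ for all $z\in D$, and closure of $D$ under negation upgrades this to equality, which is exactly $x\in D^\bot$; in (ii) the substitution $w=y-z$ is legitimate because $z\mapsto y-z$ maps $D$ onto $D$ when $y\in D$, and the same symmetry argument then gives $x-y\in D^\bot$. One small point worth making explicit is that $\theta\in D$ (and $y\in D$), so that $\theta$ (respectively $y$) is an admissible candidate for the projection in the ``if'' direction of \eqref{1.5}; this is immediate but is the hypothesis that makes the equivalence usable. The paper itself does not prove this proposition: it is stated as a specialization of Proposition 3.9 in reference [12], which concerns the metric projection in general uniformly convex and uniformly smooth Banach spaces, where the analogue of $D^\bot$ is defined through the duality mapping and need not be convex. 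Your route is therefore genuinely different in character --- a short, self-contained Hilbert-space argument resting only on Proposition 1.1(ii) --- and it buys transparency and independence from the unpublished reference, at the cost of not illuminating how the statement degenerates from the Banach-space setting. Either justification is acceptable; yours is arguably preferable in a paper whose stated aim is to make the Hilbert-space case precise and explicit.
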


\begin{definition}
Let $K$ be a closed and convex cone in a Hilbert space $H$ with vertex at $v \in X$. We denote
$$K^\wedge=P_K^{-1}(v)=\{x\in X: \langle x-v,v-z\rangle\geq 0, \ \text{for all} \ z\in K\}.$$
Then by part (ii) in Lemma 2.1, $K^\wedge=P_K^{-1}(v)$ is a closed and convex cone in $X$ with vertex at $v$. $K^\wedge$ is called the dual cone of $K$ in $H$.
\end{definition}

However, notice that, in uniformly convex and uniformly smooth Banach spaces, by Theorem 3.1 in [11], $P_K^{-1}(v)$ is a closed cone in $X$ with vertex at $v$ and it is non-convex, in general.

\begin{lemma}
	 (A new version of Lemma 3.11 in [12]).  Let $K$ be a closed and convex cone in $H$ with vertex at $v\in H$. Then, for every $y\in K$ with $y\neq v$ and for any $u=v+t(y-v)$ with $t>0$, we have
	 $$P_K^{-1}(u) = u-y+P_K^{-1}(y).$$
\end{lemma}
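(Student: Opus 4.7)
The plan is to convert both membership statements into variational inequalities via the basic variational principle \eqref{1.5}, and then exploit the positive homogeneity of $K$ about its vertex $v$ to pass between the two systems by simply rescaling the test point.

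First I would rewrite the two sides. By \eqref{1.5}, $x\in P_K^{-1}(u)$ is equivalent to
\begin{equation*}
\langle x-u,\,u-z\rangle \geq 0 \quad \text{for all } z\in K.
\end{equation*}
Setting $x'=x-(u-y)$, one checks that $x'-y=x-u$, so $x'\in P_K^{-1}(y)$ is equivalent to
\begin{equation*}
\langle x-u,\,y-z\rangle \geq 0 \quad \text{for all } z\in K.
\end{equation*}
Thus proving the lemma reduces to establishing the equivalence of these two systems of inequalities.

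Next I would use the cone structure. Since $K$ is convex with vertex $v$, for any $z\in K$ and any $s\geq 0$ the point $v+s(z-v)$ again lies in $K$; geometrically it is often cleanest to translate by $-v$ so that $K-v$ is a convex cone at the origin and $u-v=t(y-v)$. For the forward implication, given $z\in K$ I would substitute $\tilde z=v+t(z-v)\in K$ into the first inequality; using $u-\tilde z=t(y-z)$ and $t>0$, I can factor out $t$ and obtain the second inequality at $z$. For the reverse, given $z\in K$ I would substitute $\hat z=v+t^{-1}(z-v)\in K$ into the second inequality and use $y-\hat z=t^{-1}(u-z)$ together with $t^{-1}>0$ to recover the first inequality at $z$.

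There is no real obstacle; the only care needed is the bookkeeping in verifying that the rescaled points $v+t(z-v)$ and $v+t^{-1}(z-v)$ are in $K$ (which is exactly the defining property of a cone with vertex $v$) and that the scalar factors $t,\,t^{-1}$ can be dropped because they are strictly positive. No compactness or limiting argument is required; the algebraic content of \eqref{1.5} combined with positive homogeneity of $K$ about $v$ suffices.
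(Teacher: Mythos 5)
Your proof is correct, and it is worth noting that the paper itself gives no argument for this lemma at all: it is simply imported as ``a new version of Lemma 3.11 in [12]'', where it is established in the more general setting of uniformly convex and uniformly smooth Banach spaces (presumably via the generalized variational characterization involving the duality map). What you supply instead is a self-contained Hilbert-space proof: the reduction of both memberships to the single condition $\langle x-u,\,\cdot\,\rangle\geq 0$ via \eqref{1.5} is right (indeed $x'=x-(u-y)$ gives $x'-y=x-u$), and the two substitutions $\tilde z=v+t(z-v)$ and $\hat z=v+t^{-1}(z-v)$ do the job, since $u-\tilde z=t(y-z)$ and $y-\hat z=t^{-1}(u-z)$, with the positive scalars harmless. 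Two small points you should make explicit: first, $u\in K$ (immediate from the cone property with $t>0$), which is needed for the variational characterization of $P_K^{-1}(u)$ to apply in the form you use; second, convexity of $K$ enters implicitly because \eqref{1.5} is an equivalence only for closed convex sets, so the argument is not purely about the cone's scaling structure. With those remarks your proof is complete and arguably preferable to the paper's bare citation, since it isolates exactly which properties of $K$ (Chebyshev plus positive homogeneity about $v$) drive the translation identity.
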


\begin{corollary}
	(A new version of Corollary 3.12 in [12]).  Let $K$ be a closed and convex cone in $H$ with vertex at $\theta$. Then, for every $y\in K$ with $y\neq \theta$, we have
	$$y+P_K^{-1}(ty)=ty+P_K^{-1}(y),\ \text{for any} \ t>0.$$
\end{corollary}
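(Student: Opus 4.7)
The plan is to derive this corollary as a direct specialization of Lemma 2.2 to the case $v = \theta$. Lemma 2.2 is stated for an arbitrary closed and convex cone $K$ in $H$ with vertex at some point $v$, whereas the corollary assumes the vertex is the origin, so nothing new about the geometry of $K$ needs to be established; only a small algebraic rearrangement is required.

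Concretely, I would set $v = \theta$ in Lemma 2.2. Then for any fixed $y \in K$ with $y \neq \theta$ and any $t > 0$, the point $u$ appearing in the lemma becomes
\[
u = v + t(y - v) = \theta + t(y - \theta) = ty,
\]
which still satisfies $u \neq v$ since $y \neq \theta$ and $t > 0$. Applying Lemma 2.2 then yields
\[
P_K^{-1}(ty) = P_K^{-1}(u) = u - y + P_K^{-1}(y) = ty - y + P_K^{-1}(y).
\]

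Adding $y$ to every element of the set on both sides (equivalently, using the identity $A = B$ $\Rightarrow$ $y + A = y + B$ for subsets of $H$) gives
\[
y + P_K^{-1}(ty) = y + \bigl(ty - y + P_K^{-1}(y)\bigr) = ty + P_K^{-1}(y),
\]
which is exactly the conclusion of the corollary. No obstacle is expected here: Lemma 2.2 already does the substantive work of describing $P_K^{-1}$ along a ray from the vertex, and the corollary is essentially a notational reformulation in the special case $v=\theta$. The only thing to verify is that the hypothesis $y \neq v$ of the lemma is met, which is guaranteed by the assumption $y \neq \theta$ in the corollary.
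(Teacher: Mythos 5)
Your derivation is correct and is exactly the intended route: the paper states this as an immediate corollary of Lemma 2.2 (without writing out the proof), obtained by taking $v=\theta$ so that $u=ty$ and then adding $y$ to both sides of $P_K^{-1}(ty)=ty-y+P_K^{-1}(y)$. Your check that the hypothesis $y\neq v$ is met is the only detail needed, and you handled it.
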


\section{Directional differentiability of the metric projection operator onto closed and convex subsets}
In this section, let $H$ be a Hilbert space and $C$ a nonempty closed and convex subset of $H$. By part (i) in Proposition 1.1, the metric projection $P_C$ is a well-defined single-valued mapping from $H$ onto $C$. By applying this property of $P_C$, it is possible to define the G$\hat{a}$teaux directionally differentiability of $P_C$.
\begin{definition}
Let $C$ be a nonempty closed and convex subset of $H$. For $x\in H$ and $v\in H$ with $v \neq \theta$, if the following limit exists, which is a point in $H$,
\begin{equation}\label{3.1}
	\lim_{t\downarrow0}{\frac{P_C(x+tv)-P_C(x)}{t}}
\end{equation}
then, $P_C$ is said to be (G$\hat{a}$teaux) directionally differentiable at point $x$ along direction $v$, which is denoted by
\begin{equation}\label{3.2}
	P'_C(x;v) = \lim_{t\downarrow0}{\frac{P_C(x+tv)-P_C(x)}{t}}.                                              \end{equation}
$P'_C(x;v)$ is called the (G$\hat{a}$teaux) directional derivative of $P_C$ at point $x$ along direction $v$; and $v$ is called a (G$\hat{a}$teaux) differentiable direction of $P_C$ at $x$. If $P_C$ is (G$\hat{a}$teaux) directionally differentiable at point $x\in H$ along every direction $v\in H$ with $v \neq \theta$, then $P_C$ is said to be (G$\hat{a}$teaux) directionally differentiable at point $x\in H$. It is denoted by
\begin{equation}\label{3.3}
	P'_C(x)(v) = \lim_{t\downarrow0}{\frac{P_C(x+tv)-P_C(x)}{t}}, \ \text{for}\ v\in H \ \text{with} \ v \neq \theta.
\end{equation}
$P'_C(x)(v)$ is called the (G$\hat{a}$teaux) directional derivative of $P_C$ at point $x$ along direction $v$. Let $A$ be an open subset in $H$. If $P_C$ is (G$\hat{a}$teaux) directionally differentiable at every point $x\in A$, then $P_C$ is said to be (G$\hat{a}$teaux) directionally differentiable on $A \subseteq H$.
\end{definition}
Notice that the (G$\hat{a}$teaux) directional differentiability of $P_C$ at point $x$ along direction $v$ can be equivalently defined by: there is a point, denoted by $P'_C(x;v) \in H$, such that
$$P_C(x+tv) = P_C(x) + tP'_C(x;v)+o(t),\ \text{for} \ t > 0.$$

Following the results in section 4 in [12], we give the solutions of the directionally derivatives of the metric projection $P_C$ onto nonempty closed and convex subsets of $H$.
\begin{proposition}
	 Let $C$ be a nonempty closed and convex subset of $H$. Then, $P_C$ is directionally differentiable at every point $x\in H\backslash C$ along both directions $P_C(x)-x$ and $x-P_C(x)$, respectively, such that
	$$P'_C(x; x-P_C(x)) = P'_C(x; P_C(x)-x) = \theta, \ \text{for every}\ x\in H\backslash C.$$
\end{proposition}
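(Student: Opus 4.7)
Set $y := P_C(x)$, so $y \neq x$ because $x \in H\setminus C$. The plan is to prove something strictly stronger than directional differentiability: that along each of the two rays in question, the projection is \emph{constantly equal} to $y$ for all sufficiently small $t \geq 0$. Once this is established, the difference quotient in \eqref{3.1} is identically the zero vector on some interval $(0, T)$, the one-sided limit trivially exists, and its value is $\theta$. In geometric terms, both rays are subsets of the inverse image $P_C^{-1}(y)$, which by Lemma 2.1(ii) is a closed convex cone with vertex at $y$.

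To carry this out I would compute, for the direction $v = x - y$, the identity $(x + tv) - y = (1+t)(x - y)$, valid for every $t \in \mathbb{R}$; and for the direction $v = y - x$, the identity $(x + tv) - y = (1-t)(x - y)$. In both cases, for small enough $t > 0$ the scalar in front of $(x-y)$ is strictly positive: this is automatic for all $t \geq 0$ in the first case and valid for all $t \in [0,1)$ in the second.

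The key step is then a one-line application of the variational principle \eqref{1.5} from Proposition~1.1. The forward implication of \eqref{1.5} applied to $y = P_C(x)$ yields $\langle x - y,\, y - z\rangle \geq 0$ for every $z \in C$. Multiplying this inequality by the nonnegative scalar $1+t$ or $1-t$ preserves it, so $\langle (x+tv) - y,\, y - z\rangle \geq 0$ for every $z \in C$. Reading \eqref{1.5} now in the reverse direction identifies $P_C(x + tv) = y = P_C(x)$, and the desired derivative is $\theta$ in both cases.

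There is essentially no obstacle: the whole argument rests on the fact that multiplying a nonnegative inner-product inequality by a positive constant preserves it, combined with the two-sided character of \eqref{1.5}. The only bookkeeping point worth noting is that in the second case one should restrict the parameter to $t \in [0,1)$ so that $1-t$ remains positive; at $t = 1$ one would land exactly on $y \in C$, where $P_C$ is still $y$, so no pathology occurs, but the strict inequality $t < 1$ makes the scaling argument cleanest.
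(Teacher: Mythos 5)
Your argument is correct and complete. A point worth noting for the comparison: the paper does not actually prove Proposition 3.2 in the text at all — it imports it wholesale from reference [12], where the analogous statement is established in the much more general setting of uniformly convex and uniformly smooth Banach spaces, without the benefit of the inner-product variational principle \eqref{1.5}. Your proof is therefore genuinely different in route: it is a short, self-contained Hilbert-space argument showing the stronger statement that $P_C$ is \emph{locally constant} along both rays, because $(x+tv)-y = (1+t)(x-y)$ or $(1-t)(x-y)$ and multiplying the inequality $\langle x-y, y-z\rangle \geq 0$ by a nonnegative scalar preserves it, so the two-sided character of \eqref{1.5} forces $P_C(x+tv)=y$. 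This is exactly the content of Lemma 2.1(ii) (that $P_C^{-1}(y)$ is a cone with vertex $y$, hence contains the entire ray $\{y+s(x-y): s\geq 0\}$), so you could shorten the computation by invoking that lemma directly; conversely, your explicit scalar computation re-proves the relevant piece of it. What your approach buys is transparency and independence from the unpublished reference [12]; what the paper's approach buys is uniformity with the Banach-space framework used throughout. Your bookkeeping on $t\in[0,1)$ versus $t=1$ is fine and, as you observe, immaterial since only small $t>0$ matters for the one-sided limit.
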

\begin{lemma}
	Let $C$ be a nonempty closed and convex subset of $H$. Let $u, w \in C$ with $u \neq w$. For every point $x\in\overline{u,w}$, we have
	
	(a)	If $x \neq u$ and $x \neq w$, then $P_C$ is directionally differentiable at point $x$ along both directions  $w-u$ and $u-w$ such that
	$$P'_C(x; u-w) = u-w    \quad \text{and} \quad     P'_C(x; w-u) = w-u;$$
	
	(b)	$P_C$ is directionally differentiable at $u$ along both directions $w-u$ and $u-w$ such that
	$$P'_C(u; w-u) = w-u     \quad \text{and} \quad      P'_C(w; u-w) = u-w.$$
\end{lemma}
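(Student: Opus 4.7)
The plan is to exploit the fact that, by convexity of $C$, the entire segment $\overline{u,w}$ lies in $C$, so $P_C$ acts as the identity on it. Once we verify that small perturbations of an interior point $x$ along the direction $w-u$ (or $u-w$) remain on that segment, the difference quotient in Definition 3.1 collapses to a constant and the limit is immediate.

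First I would parametrize: for $x\in\overline{u,w}$, write $x = u + s(w-u)$ with $s\in[0,1]$. Then for any real $t$,
\[
 x + t(w-u) = u + (s+t)(w-u), \qquad x + t(u-w) = u + (s-t)(w-u).
\]
For part (a), $s\in(0,1)$ since $x\neq u,w$. Taking $t>0$ small enough that $s+t<1$ and $s-t>0$, both perturbed points lie in $\overline{u,w}\subseteq C$ (convexity of $C$), so $P_C$ fixes them. Hence
\[
 \frac{P_C(x+t(w-u))-P_C(x)}{t} = \frac{(x+t(w-u))-x}{t}=w-u,
\]
and similarly $\tfrac{1}{t}\bigl(P_C(x+t(u-w))-P_C(x)\bigr)=u-w$, which gives the two claimed derivatives.

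For part (b), at the endpoint $u$ I take direction $w-u$: for $t\in(0,1)$, $u+t(w-u)=(1-t)u+tw\in\overline{u,w}\subseteq C$, so again $P_C$ is the identity on this ray near $u$ and the difference quotient equals $w-u$ for all small $t>0$. The symmetric argument at $w$ with direction $u-w$ yields $P'_C(w;u-w)=u-w$. No appeal to the variational inequality or to nonexpansiveness is needed; the only input beyond the definition is convexity of $C$, which places the relevant segments inside $C$.

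The ``hard part'' is essentially cosmetic: making sure the parameter $s+t$ (respectively $s-t$) stays in $[0,1]$ for sufficiently small $t>0$, which is automatic from $s\in(0,1)$ in (a) and from $s\in\{0,1\}$ with the correct choice of direction in (b). Once this is checked, the directional derivative exists trivially because the difference quotient is constant in $t$ on a right neighborhood of $0$, not merely convergent.
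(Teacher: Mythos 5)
Your proof is correct. The paper itself supplies no argument for this lemma --- it simply imports it (together with the surrounding Proposition 3.1 through Proposition 3.3) from Section 4 of reference [12], where the analogous statement is established in uniformly convex and uniformly smooth Banach spaces. Your argument is the natural self-contained one in the Hilbert (indeed, in any normed) setting: convexity puts the whole segment $\overline{u,w}$ inside $C$, Proposition 1.1(i) guarantees $P_C$ is single-valued with $P_C(z)=z$ for $z\in C$, and then the parametrization $x=u+s(w-u)$ shows the difference quotient is \emph{identically} $w-u$ (resp. $u-w$) for all sufficiently small $t>0$, so the limit in \eqref{3.1} exists trivially. This is strictly more elementary than what the general Banach-space machinery of [12] requires: you never invoke the variational principle \eqref{1.5}, nonexpansiveness \eqref{1.8}, or any smoothness/convexity modulus of the space, and the same proof works verbatim in any normed space in which $C$ is Chebyshev. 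One small point of bookkeeping: the wording of part (b) ("directionally differentiable at $u$ along both directions $w-u$ and $u-w$") does not match its displayed conclusions, which concern only the inward directions $P'_C(u;w-u)$ and $P'_C(w;u-w)$; your proof establishes exactly the displayed formulas, which is the only reading under which the claim can hold in general, since at an endpoint the outward direction may leave $C$ and the derivative there depends on the local geometry of $C$.
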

\begin{lemma}
	Let $C$ be a nonempty closed and convex subset of $H$. Then, the following statements are equivalent
	
	(i) $P_C$ is directionally differentiable on $H$ such that, for every point $x\in H$,
	$$P'_C(x)(v)=\theta, \ \text{for any} \ v\in H \ \text{with} \ v \neq \theta;$$
	
	(ii) $P_C$ is a constant operator; that is, $C$ is a singleton.
\end{lemma}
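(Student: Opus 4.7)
The proof reduces to two implications, and the real content all sits in the direction (i) $\Rightarrow$ (ii); the other direction is a one-line observation.

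For the implication (ii) $\Rightarrow$ (i), the plan is simply to note that if $C=\{c\}$ is a singleton, then $P_C(x)=c$ for every $x\in H$, so $P_C(x+tv)-P_C(x)=\theta$ identically in $x$, $v$ and $t$. Dividing by $t>0$ and passing to the limit as $t\downarrow 0$ gives $P_C'(x)(v)=\theta$ for every $x\in H$ and every $v\neq \theta$, which is precisely (i).

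For the implication (i) $\Rightarrow$ (ii), I would argue by contraposition: assume $C$ is not a singleton and produce a point $x\in H$ and a direction $v\neq\theta$ for which $P_C'(x;v)\neq\theta$. Since $C$ has at least two distinct points, pick $u,w\in C$ with $u\neq w$. Now apply part (b) of Lemma~3.2 at the point $x:=u\in C\subset H$, with direction $v:=w-u\neq\theta$. That lemma yields
\[
P_C'(u;\,w-u)\;=\;w-u\;\neq\;\theta,
\]
which directly violates the conclusion in (i). Hence (i) forces $C$ to be a singleton.

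I do not foresee any real obstacle: the statement is essentially a packaging of Lemma~3.2(b), and once (ii) is negated the contradiction is immediate. The only point worth being careful about is the existence of the limit in the definition of $P_C'(u;w-u)$, but Lemma~3.2(b) both asserts existence of this directional derivative at the boundary point $u\in C$ and identifies its value, so no additional work is needed.
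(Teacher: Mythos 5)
Your proof is correct. The paper states this lemma without giving a proof (it is imported from the results of Section 4 of reference [12]), so there is nothing to compare line by line, but your argument --- the trivial computation for (ii) $\Rightarrow$ (i), and contraposition via the line-segment lemma giving $P_C'(u;w-u)=w-u\neq\theta$ for two distinct points $u,w\in C$ --- is exactly the natural route and matches the machinery the paper sets up for this purpose. One small correction: the lemma you invoke with parts (a) and (b) is Lemma~3.1 in this paper's numbering, not Lemma~3.2 (which is the statement being proved).
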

\begin{corollary}
 Let $C$ be a nonempty closed and convex subset of $H$. Then, for every point $x\in H$, there is least one differentiable direction of $P_C$ at $x$.
\end{corollary}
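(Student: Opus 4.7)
The plan is to dispose of Corollary 3.1 by a short case analysis on the position of $x$ relative to $C$, invoking the two existence results (Proposition 3.1 and Lemma 3.2) that the section has already established. Since those results actually exhibit explicit directional derivatives, the corollary reduces to checking that in each case the direction we produce is nonzero.

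First, suppose $x\in H\setminus C$. Then by Proposition~3.1, $P_C$ is directionally differentiable at $x$ along the direction $x-P_C(x)$, with derivative $\theta$. Because $x\notin C$ while $P_C(x)\in C$, the vector $x-P_C(x)$ is not the zero vector, so it qualifies as a differentiable direction of $P_C$ at $x$.

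Next, suppose $x\in C$. Here I split according to whether $C$ is a singleton. If $C=\{x\}$, then $P_C$ is the constant operator sending every point of $H$ to $x$, and so for any $v\in H$ with $v\neq\theta$ the difference quotient in \eqref{3.1} is identically $\theta$; thus every nonzero $v$ is a differentiable direction (this is also the content of Lemma~3.3, (ii)$\Rightarrow$(i)). If instead $C$ contains a point $w$ distinct from $x$, then I set $u=x$ and apply part (b) of Lemma~3.2 to the pair $u,w\in C$: this tells me that $P_C$ is directionally differentiable at $x$ along $w-x$, with $P_C'(x;w-x)=w-x$. Since $w\neq x$, the direction $w-x$ is nonzero.

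The three cases exhaust the possibilities for $x\in H$, so in every case we have produced a nonzero direction along which $P_C$ is G\^ateaux directionally differentiable. There is no genuine obstacle here; the only thing to be careful about is the singleton-$C$ subcase, which must be separated out because Lemma~3.2 requires two distinct points in $C$ and Proposition~3.1 requires $x\notin C$, so without $C=\{x\}$ being handled on its own the point $x\in C$ with $C=\{x\}$ would fall through both hypotheses. Once this bookkeeping is done, the corollary follows immediately with no further estimate.
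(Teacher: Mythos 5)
Your proof is correct and takes exactly the route the paper intends: the paper states Corollary 3.1 without an explicit proof, as an immediate consequence of Proposition 3.1 (handling $x\in H\setminus C$, where $x-P_C(x)\neq\theta$) and Lemma 3.2(b) (handling $x\in C$ when $C$ contains a second point). Your explicit separation of the singleton case $C=\{x\}$ is the only extra bookkeeping required, and you handle it correctly via Lemma 3.3.
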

Next lemma proves that the directionally derivative of $P_C$ is positive homogenous.
\begin{lemma}
	Let $C$ be a nonempty closed and convex subset of $H$. For $x\in H$ and $v\in H$ with $v \neq \theta$, if $P_C$ is directionally differentiable at $x$ along direction $v$, then, for any $\lambda > 0$, $P_C$ is directionally differentiable at $x$ along direction $\lambda v$ such that
	$$P'_C(x; \lambda v) = \lambda P'_C(x;v), \ \text{for any} \ \lambda > 0.$$
\end{lemma}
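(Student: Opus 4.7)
The plan is to reduce the claim to the original difference quotient by a simple change of variable in the limit parameter, exploiting the fact that $\lambda>0$ so that positive scaling of the step size is still a one-sided limit to zero from above.

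More concretely, I would start from the definition in \eqref{3.2} applied to the direction $\lambda v$, that is
\[
\frac{P_C(x+t(\lambda v))-P_C(x)}{t}
\qquad\text{for } t>0,
\]
and then rewrite this quotient as
\[
\lambda\cdot\frac{P_C(x+(\lambda t)v)-P_C(x)}{\lambda t}.
\]
Setting $s=\lambda t$, the hypothesis $\lambda>0$ guarantees the equivalence $t\downarrow 0\Leftrightarrow s\downarrow 0$, so this quantity converges as $t\downarrow 0$ precisely when $(P_C(x+sv)-P_C(x))/s$ converges as $s\downarrow 0$. By the assumed directional differentiability of $P_C$ at $x$ along $v$, the latter limit exists and equals $P'_C(x;v)$, whence
\[
\lim_{t\downarrow 0}\frac{P_C(x+t(\lambda v))-P_C(x)}{t}
= \lambda\,P'_C(x;v),
\]
which is exactly the required identity $P'_C(x;\lambda v)=\lambda P'_C(x;v)$.

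There is no real obstacle here; the only point to be careful about is that the argument uses $\lambda>0$ in two essential places, namely to legitimize dividing and multiplying by $\lambda$ without changing the direction of the one-sided limit, and to ensure that $s=\lambda t$ still approaches $0$ from the right. This is also why the statement only claims positive homogeneity and not full homogeneity: for $\lambda<0$ the substitution $s=\lambda t$ would turn a right-limit into a left-limit, and the corresponding identity would require directional differentiability of $P_C$ at $x$ along $-v$, which is an independent assumption and generally a different condition.
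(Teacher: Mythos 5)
Your proof is correct: the substitution $s=\lambda t$ together with factoring out $\lambda$ from the difference quotient is exactly the standard argument for positive homogeneity, and it is the argument the paper relies on (the paper itself defers the proof of this lemma to the results of section 4 in reference [12] rather than writing it out). Your closing remark about why $\lambda<0$ fails is also accurate and correctly identifies the only place where positivity is essential.
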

\begin{proposition}
	Let $C$ be a nonempty closed and convex subset of $H$. Let $y\in C$. Suppose $(P_C^{-1}(y))^o \neq\emptyset.$ Then, $P_C$ is directionally differentiable on $(P_C^{-1}(y))^o$ such that, for any $x\in(P_C^{-1}(y))^o$, we have
	$$P'_C(x)(v) = \theta, \ \text{for every} \ v\in H \ \text{with}\  v \neq \theta.$$
\end{proposition}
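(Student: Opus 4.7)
The plan is to exploit the openness hypothesis directly: if $x$ lies in the interior of the fibre $P_C^{-1}(y)$, then a whole neighbourhood of $x$ gets collapsed to the single point $y$ under $P_C$, so the difference quotient in \eqref{3.2} is identically zero for all sufficiently small $t>0$, regardless of the direction $v$.

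More precisely, I would argue as follows. Fix $x \in (P_C^{-1}(y))^o$ and $v \in H$ with $v \neq \theta$. By definition of the interior, there exists $r > 0$ such that the open ball $B^o(x,r) \subseteq P_C^{-1}(y)$. Choose $t_0 = r/\|v\| > 0$; then for every $t \in (0, t_0)$ we have $\|(x+tv) - x\| = t\|v\| < r$, so $x + tv \in B^o(x,r) \subseteq P_C^{-1}(y)$. By the definition of the inverse image this gives $P_C(x+tv) = y$. On the other hand, $x \in P_C^{-1}(y)$ forces $P_C(x) = y$ as well.

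Consequently, for every $t \in (0, t_0)$,
\begin{equation*}
\frac{P_C(x+tv) - P_C(x)}{t} = \frac{y - y}{t} = \theta.
\end{equation*}
Passing to the limit $t \downarrow 0$ shows that the limit in \eqref{3.1} exists and equals $\theta$, so $P_C$ is directionally differentiable at $x$ along $v$ with $P'_C(x;v) = \theta$. Since $v \in H \setminus \{\theta\}$ was arbitrary, $P_C$ is directionally differentiable at $x$ in the sense of \eqref{3.3}, and $P'_C(x)(v) = \theta$ for every such $v$. As $x \in (P_C^{-1}(y))^o$ was arbitrary, this proves directional differentiability on the whole set $(P_C^{-1}(y))^o$.

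There is essentially no obstacle in this argument; the entire content is the observation that interior points of a fibre admit a full neighbourhood on which $P_C$ is constant. The only thing to verify carefully is that the restriction to $v \neq \theta$ in Definition~3.1 is compatible with the stated conclusion, which is immediate since the constancy of $P_C$ on $B^o(x,r)$ does not depend on the particular direction chosen.
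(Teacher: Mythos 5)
Your proof is correct: the paper itself gives no proof of this proposition (it defers to the results of [12]), and your argument — that an interior point of the fibre $P_C^{-1}(y)$ has a whole ball on which $P_C$ is constantly $y$, so the difference quotient vanishes identically for small $t$ — is exactly the natural local-constancy argument any proof would use. No gaps.
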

\begin{proposition}
Let $C$ be a nonempty closed and convex subset of $H$. Suppose $C^o \neq\emptyset$. Then $P_C$ is directionally differentiable on $C^o$ such that, for any $x\in C^o$, we have
$$P'_C(x)(v) = v, \ \text{for every} \ v\in H \ \text{with}\ v \neq \theta.$$
\end{proposition}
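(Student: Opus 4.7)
The plan is essentially immediate from the fact that $P_C$ restricts to the identity on $C$. Fixing $x \in C^o$, I would first invoke the definition of interior to obtain $r > 0$ with $B^o(x, r) \subseteq C$. Then for any prescribed direction $v \in H$ with $v \neq \theta$, restricting to step sizes $0 < t < r/\|v\|$ gives $\|(x + tv) - x\| = t\|v\| < r$, so the perturbed point $x + tv$ still lies in $B^o(x, r) \subseteq C$.

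Next I would apply part (i) of Proposition 1.1 together with the defining inequality \eqref{1.4}: any $z \in C$ trivially attains zero distance to itself, so the unique minimizer in $C$ must be $z$ itself, giving $P_C(z) = z$ for every $z \in C$. Applying this at $z = x$ and at $z = x + tv$, the difference quotient in \eqref{3.1} collapses to
\begin{equation*}
\frac{P_C(x + tv) - P_C(x)}{t} \;=\; \frac{(x + tv) - x}{t} \;=\; v
\end{equation*}
for every sufficiently small $t > 0$. Letting $t \downarrow 0$ therefore yields $P'_C(x)(v) = v$. Since $v$ was an arbitrary nonzero direction, $P_C$ is directionally differentiable at every $x \in C^o$ with the claimed formula.

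There is no genuine obstacle here: the full content of the proposition is that interior points of $C$ are locally fixed by $P_C$, so the difference quotient is identically $v$ for all small enough $t$ and the limit is trivial.
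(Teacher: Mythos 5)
Your proof is correct and complete: interior points of $C$ have a neighborhood contained in $C$, the metric projection fixes every point of $C$ (by uniqueness of the minimizer in part (i) of Proposition 1.1 and the trivial inequality $\|z-z\|=0\leq\|z-w\|$), so the difference quotient is identically $v$ for all sufficiently small $t>0$. The paper itself states this proposition without proof, deferring to the results of section 4 in [12] for uniformly convex and uniformly smooth Banach spaces; your argument is exactly the elementary one that any such proof reduces to, and nothing Hilbert-specific is needed.
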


\section{Directional differentiability of the metric projection operator onto closed balls}
Recall that, for $c\in H$ and $r > 0$, the closed, open balls and sphere in $H$ with center at $c$ and with radius $r$ are respectively written as $B(c, r)$, $(B(c, r))^o$ and $S(c, r)$.

For any $x\in S(c, r)$, we define two subsets $x_{(c,\ r)}^\uparrow$ and $x_{(c,\ r)}^\downarrow$ of $H\backslash\{\theta\}$ as follows: for $v\in H$ with $v \neq \theta$, we say

(a)	$v\in x_{(c,\ r)}^\uparrow  \Leftrightarrow  \ \text{there is} \ \delta > 0 \ \text{such that} \ \|(x+tv)-c\| \geq r, \ \text{for all} \ t\in (0, \delta);$

(b)	$v\in x_{(c,\ r)}^\downarrow  \Leftrightarrow  \ \text{there is} \ \delta > 0 \ \text{such that} \ \|(x+tv)-c\| < r, \ \text{for all}\ t\in(0, \delta).$

In particular, $B$ is the closed unit ball and $S$ is the closed unit sphere in $H$. For any given $x\in S$ and for $v\in H$ with $v \neq \theta$, we write

(c) $x^\uparrow = x_{(\theta, 1)}^\uparrow:  v\in x^\uparrow \Leftrightarrow \ \text{there is} \ \delta > 0 \ \text{such that} \ \|x+tv\| \geq 1, \ \text{for all} \ t\in(0, \delta);$

(d) $x^\downarrow = x_{(\theta, 1)}^\downarrow:  v\in x^\downarrow\Leftrightarrow  \ \text{there is} \ \delta > 0 \ \text{such that} \  \|x+tv\| < 1, \ \text{for all} \ t\in(0, \delta).$

The Lemma 5.1 in [12] shows that, for any given $x\in S(c, r)$, the two subsets $x_{(c,\ r)}^\uparrow$ and $x_{(c,\ r)}^\downarrow$ form a partition of $H\backslash\{\theta\}$.

\begin{lemma}
	 Let $c\in H$ and $r>0$. Then, for any $x\in S(c, r)$, we have
	
	$$x_{(c,\ r)}^\uparrow\cap x_{(c,\ r)}^\downarrow = \emptyset  \ \text{and} \   x_{(c,\ r)}^\uparrow\cup x_{(c,\ r)}^\downarrow = H\backslash\{\theta\}.$$
\end{lemma}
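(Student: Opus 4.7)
The plan is to reduce the question to analyzing the sign of a simple real-valued function of $t$. For fixed $x \in S(c,r)$ and $v \in H$ with $v \neq \theta$, I would introduce
\[
f(t) = \|(x+tv) - c\|^{2} - r^{2},
\]
so that the conditions defining $x_{(c,r)}^{\uparrow}$ and $x_{(c,r)}^{\downarrow}$ translate directly into $f(t) \geq 0$ and $f(t) < 0$ on some right-neighborhood of $0$, respectively. Expanding the inner product and using $\|x-c\| = r$ gives
\[
f(t) = 2t\,\langle x-c,\, v\rangle + t^{2}\|v\|^{2} = t\bigl(2\langle x-c,\, v\rangle + t\|v\|^{2}\bigr).
\]

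For the disjointness $x_{(c,r)}^{\uparrow}\cap x_{(c,r)}^{\downarrow} = \emptyset$, suppose for contradiction that some $v \neq \theta$ lies in both sets. Then there exist $\delta_{1},\delta_{2} > 0$ with $f(t) \geq 0$ on $(0,\delta_{1})$ and $f(t) < 0$ on $(0,\delta_{2})$; picking any $t \in (0, \min\{\delta_{1},\delta_{2}\})$ yields the immediate contradiction, so the two sets are disjoint.

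For the covering $x_{(c,r)}^{\uparrow}\cup x_{(c,r)}^{\downarrow} = H\setminus\{\theta\}$, I would split on the sign of the linear coefficient $\alpha := \langle x-c, v\rangle$. If $\alpha > 0$, then for $t$ small the bracket $2\alpha + t\|v\|^{2}$ stays positive, hence $f(t) > 0$, giving $v \in x_{(c,r)}^{\uparrow}$. If $\alpha < 0$, then for $t$ small the bracket is negative while $t > 0$, hence $f(t) < 0$, giving $v \in x_{(c,r)}^{\downarrow}$. If $\alpha = 0$, then $f(t) = t^{2}\|v\|^{2} > 0$ for all $t > 0$ (here $v \neq \theta$ is essential), so $v \in x_{(c,r)}^{\uparrow}$.

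There is no serious obstacle here: the whole argument reduces to a quadratic-in-$t$ computation enabled by the Hilbert-space inner-product expansion of the norm, and the only subtlety is remembering to handle the tangential case $\langle x-c,v\rangle = 0$ separately, where the quadratic term (rather than the linear one) decides the sign. This is exactly why the definition of $x_{(c,r)}^{\uparrow}$ uses the non-strict inequality $\geq r$ while $x_{(c,r)}^{\downarrow}$ uses the strict inequality $< r$, making the partition consistent in the tangential case.
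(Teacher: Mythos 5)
Your proof is correct, and it is worth noting that the paper itself gives no proof at all of this lemma: it simply cites Lemma 5.1 of reference [12], where the statement is established in the general setting of uniformly convex and uniformly smooth Banach spaces (necessarily by a different mechanism, since there one cannot expand the norm via an inner product and must instead work with the directional derivative of the norm, i.e.\ the function of smoothness $\psi$). Your argument is a clean Hilbert-space shortcut: the identity $\|(x+tv)-c\|^{2}-r^{2}=t\bigl(2\langle x-c,v\rangle+t\|v\|^{2}\bigr)$ reduces everything to the sign of a linear factor in $t$, the trichotomy on $\alpha=\langle x-c,v\rangle$ is exhaustive, and you correctly isolate the tangential case $\alpha=0$, where the strict positivity $t^{2}\|v\|^{2}>0$ (using $v\neq\theta$) places $v$ in $x_{(c,r)}^{\uparrow}$ and explains why the asymmetry between $\geq r$ and $<r$ in the definitions is exactly what makes the two sets a partition. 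What your approach buys is a short, fully self-contained and quantitative proof (it even shows the $\delta$ in the definition can be taken to be $-2\alpha/\|v\|^{2}$ when $\alpha<0$ and $+\infty$ otherwise); what the paper's citation buys is generality, since the Banach-space version of the lemma covers spaces where no such quadratic expansion exists. One very minor point of wording: in the case $\alpha>0$ you conclude $f(t)>0$, which gives $\|(x+tv)-c\|>r$ and hence certainly the non-strict inequality required for membership in $x_{(c,r)}^{\uparrow}$; stating that last implication explicitly would make the step airtight.
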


Next, we apply Theorem 5.2 in [12] to give the analytic representations (solutions) of the derivatives of the metric projection operator on closed balls in Hilbert spaces.

In [12], the concept of the function of smoothness of smooth Banach spaces is introduced. Let $X$ be a smooth Banach space. Define the function of smoothness of smooth of $X, \psi: S\times S
\rightarrow \mathbb{R}_+$ by
$$\psi(x,v) = \lim_{t\downarrow0}\frac{\|x+tv\|-\|x\|}{t},  \ \text{for any} \ (x,v)\in S\times S.$$
In particular, when the considered smooth Banach space is a Hilbert space, by \eqref{1.3}, we have
$$\psi(x,v) = \langle x,v\rangle,\ \text{for any} \ (x,v)\in S\times S.$$

In Theorem 5.2 in [12], $\psi$ is used to represent the solution of the directionally derivatives of the metric projection operator onto closed balls in uniformly convex and uniformly smooth Banach spaces. We recall Theorem 5.2 in [12] below.

\bigskip

\noindent{\bf Theorem 5.2 in [12].} {\it Let $C = B(c, r)$ be a closed ball in a uniformly convex and uniformly smooth Banach space $X$. Then, $P_C$ is directionally differentiable on $X$ such that, for every $v\in X$ with $v \neq \theta$, we have
	
	(i) For any $x\in (B(c, r))^o$, $P'_C(x)(v) = v;$
	
	(ii) For any $x\in X\backslash B(c, r)$,
	$$P'_C(x)(v) = \frac{r}{\|x-c\|^2}\left(\|x-c\|v - \psi\left(\frac{x-c}{\|x-c\|},\frac{v}{\|v\|}\right)\|v\|(x-c)\right);$$
	
	(iii) For any $x\in S(c, r)$, we have
	
	\ \ \ \ \ \ (a)        $P'_C(x)(v) = v-\frac{\|v\|}{r}\psi\left(\frac{x-c}{\|x-c\|},\frac{v}{\|v\|}\right)(x-c),  \ \text{if} \ v\in x_{(c, r)}^\uparrow,$
	
	\ \ \ \ \ \ (b)         $P'_C(x)(v) = v, \ \text{if} \ v\in x_{(c, r)}^\downarrow.$}

Applying Theorem 5.2 in [12] to Hilbert spaces, we have

\begin{theorem}
	Let $C = B(c, r)$ be a closed ball in Hilbert space $H$. Then, $P_C$ is directionally differentiable on $H$ such that, for every $v\in H$ with $v \neq \theta$, we have
	
	(i) For any $x\in (B(c, r))^o$, we have
	
	\ \ \ \ \ \  (a) $P'_C(x)(v) = v,$
	
	\ \ \ \ \ \  (b) $P'_C(x)(x) = x, \ \text{for} \ x \neq \theta;$
	
	(ii) For any $x\in H\backslash B(c, r)$, we have
	
	\ \ \ \ \ \ (a) $P'_C(x)(v) = \frac{r}{\|x-c\|^3}(\|x-c\|^2v-\langle x-c,v\rangle(x-c)),$
	
	\ \ \ \ \ \ (b) $P'_C(x)(x-c) = \theta;$
	
	(iii) For any $x\in S(c, r)$, we have
	
	\ \ \ \ \ \ (a)  $P'_C(x)(v) = v-\frac{1}{r^2}\langle x-c,v\rangle(x-c),  \ \text{if} \ v\in x_{(c, r)}^\uparrow;$
	
	\ \ \ \ \ \ (b)   $P'_C(x)(x-c) = \theta,$
	
	\ \ \ \ \ \ (c)   $P'_C(x)(v) = v,  \ \text{if} \ v\in x_{(c,r)}^\downarrow.$
\end{theorem}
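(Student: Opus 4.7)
The plan is to specialize Theorem 5.2 in [12] to the Hilbert-space setting by invoking the explicit form of the smoothness function. Since every Hilbert space is uniformly convex and uniformly smooth, Theorem 5.2 in [12] applies verbatim; what remains is to translate the $\psi$-expressions appearing in it into inner products via the identity $\psi(x,v) = \langle x, v\rangle$ on $S \times S$, which the authors record just before the theorem as a consequence of \eqref{1.3}.

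I would then handle the three cases in parallel. For (i), the stated formula $P'_C(x)(v) = v$ is exactly Theorem 5.2(i) in [12], and (i)(b) is the specialization $v = x$ (with $x \neq \theta$). For (ii) and (iii)(a), I would normalize $x - c$ and $v$, substitute $\psi(\hat{x}, \hat{v}) = \langle x-c, v\rangle/(\|x-c\|\|v\|)$ into the corresponding formulas of Theorem 5.2 in [12], cancel the factor of $\|v\|$, and clear one power of $\|x-c\|$; in case (iii), the further simplification $\|x-c\| = r$ produces (iii)(a). The identities (ii)(b) and (iii)(b) are then immediate specializations at $v = x-c$, since $\langle x-c, x-c\rangle = \|x-c\|^2$ makes the orthogonalization term cancel $v$ exactly. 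Finally, (iii)(c) is Theorem 5.2(iii)(b) in [12] unchanged.

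The computations involved are elementary inner-product manipulations, so no step presents a real obstacle. The one point that will require a moment's attention is (iii)(b): I must check that $v = x-c$ belongs to the outward set $x_{(c,r)}^{\uparrow}$, so that formula (iii)(a), not (iii)(c), is the one to apply. This is immediate from $\|x + t(x-c) - c\| = (1+t)\,r \geq r$ for every $t \geq 0$, after which (iii)(b) drops out of (iii)(a) by a single substitution.
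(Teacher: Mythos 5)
Your proposal matches the paper's own proof: both specialize Theorem 5.2 in [12] by computing $\psi\bigl(\frac{x-c}{\|x-c\|},\frac{v}{\|v\|}\bigr)=\frac{\langle x-c,v\rangle}{\|x-c\|\,\|v\|}$ from \eqref{1.3} and substituting into the three cases. Your extra check that $x-c\in x_{(c,r)}^{\uparrow}$ for (iii)(b) is a sensible detail the paper leaves implicit.
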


\begin{proof}
	Notice that the norm $\|\cdot\|$ of Hilbert space $H$ is G$\hat{a}$teaux directionally differentiable. By \eqref{1.3}, for any $x, v\in H$ with $x\neq c$ and $v \neq \theta$, we have
	\begin{equation*}
		\begin{aligned}
			\psi\left(\frac{x-c}{\|x-c\|},\frac{v}{\|v\|}\right)
			=&\lim_{t\downarrow0}\frac{\left\| \frac{x-c}{\|x-c\|} + t\frac{\|v\|}{\|x-c\|} \frac{v}{\|v\|}\right\|-\left\|\frac{x-c}{\|x-c\|}\right\|}{t\frac{\|v\|}{\|x-c\|}}\\
			=& \left\langle \frac{x-c}{\|x-c\|},\frac{v}{\|v\|}\right\rangle\\
			=& \frac{1}{\|x-c\|\|v\|} \langle x-c,v\rangle.
		\end{aligned}
	\end{equation*}
	Then, this theorem follows from Theorem 5.2 in [12] immediately.
\end{proof}

\begin{corollary}
 Let $H$ be a Hilbert space. Then, $P_B$ is directionally differentiable on $H$ such that, for every $v\in H$ with $v \neq \theta$, we have

(i) For any $x\in B^o$,

\ \ \ \ \  \  (a)     $P'_B(x)(v) = v,$

\ \ \ \ \  \  (b)     $P'_B(x)(x) = x, \ \text{for} \ x \neq \theta;$

(ii) For any $x\in H\backslash B$,

\ \ \ \ \ \ 	(a) $P'_B(x)(v) = \frac{1}{\|x\|^3}(\|x\|^2v-\langle x,v\rangle x),$

\ \ \ \ \ \ 	(b) $P'_B(x)(v) = \frac{v}{\|x\|}, \ \text{if} \  x\bot v,$

\ \ \ \ \ \ 	(c) $P'_C(x)(x) = \theta;$

(iii) For any $x\in S$, we have

\ \ \ \ \ \      (a) $P'_B(x)(v) = v-\langle x,v\rangle x,  \ \text{if} \ v\in x^\uparrow,$

\ \ \ \ \ \      (b) $P'_B(x)(v) = v, \ \text{if} \ v\in x^\uparrow \ \text{and}\ x\bot v,$

\ \ \ \ \ \      (c) $P'_B(x)(v) = v,  \ \text{if} \ v\in x^\downarrow$.
\end{corollary}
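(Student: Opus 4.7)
The corollary is a direct specialization of Theorem 4.1 to the case $c=\theta$ and $r=1$, together with a short inspection of the degenerate direction choices. My plan is first to substitute $c=\theta$ and $r=1$ throughout Theorem 4.1: this turns $B(c,r)$ into $B$, $S(c,r)$ into $S$, and the sets $x^{\uparrow}_{(c,r)}$, $x^{\downarrow}_{(c,r)}$ into $x^{\uparrow}$, $x^{\downarrow}$ as defined in items (c) and (d) preceding Lemma 4.1. The expressions $\|x-c\|$ and $\langle x-c,v\rangle$ simplify to $\|x\|$ and $\langle x,v\rangle$. Items (i)(a), (ii)(a), (iii)(a), and (iii)(c) of the corollary then read off from the corresponding statements of Theorem 4.1 with no additional work.

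Next I would verify the four ``refinement'' statements by plugging in the indicated relation between $x$ and $v$. For (i)(b), taking $v=x$ in (i)(a) immediately gives $P'_B(x)(x)=x$ for $x\in B^o\setminus\{\theta\}$. For (ii)(b), the hypothesis $x\perp v$ forces $\langle x,v\rangle=0$ in the formula of (ii)(a), so the second term vanishes and the remaining expression collapses to $v/\|x\|$. For (ii)(c), setting $v=x-c=x$ in Theorem 4.1 (ii)(b) yields $P'_B(x)(x)=\theta$; one should note that the ``$P'_C$'' written in the corollary is a typographical slip for $P'_B$. Finally, for (iii)(b), the standing hypothesis $v\in x^{\uparrow}$ lets me apply (iii)(a), after which $x\perp v$ kills the correction term and leaves $P'_B(x)(v)=v$.

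The only point worth watching carefully is that the partition of $H\setminus\{\theta\}$ into $x^{\uparrow}$ and $x^{\downarrow}$ supplied by Lemma 4.1 is invoked only when $x\in S$; for $x\in B^o$ and $x\in H\setminus B$ the formulas of parts (i) and (ii) hold for every $v\neq\theta$ with no directional restriction, so the three subcases of (iii) exhaust the unit sphere case while (i) and (ii) exhaust the interior and exterior cases. There is no genuine obstacle here: the argument is entirely a substitution, followed by the observation that the stated refinements are immediate consequences of the general formulas combined with $\langle x,v\rangle=0$ or $v=x$.
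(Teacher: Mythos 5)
Your proposal is correct and matches the paper's intent exactly: the paper states Corollary 4.1 without proof as the immediate specialization of Theorem 4.1 to $c=\theta$, $r=1$, and your substitution argument together with the checks for the refined cases (including the observation that $P'_C$ in (ii)(c) is a typographical slip for $P'_B$) is precisely what is needed.
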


\section{Directional differentiability of the metric projection operator in Hilbert spaces with orthonormal bases}
\noindent{{{5.1. \bf Closed balls in Hilbert spaces with orthonormal bases.}}}
In this section, we consider Hilbert spaces that have orthonormal bases. See [6] for more details about Hilbert spaces with orthonormal bases. Let $H$ be a Hilbert space with norm $\|\cdot\|$ and with an orthonormal basis $\{e_n\}$, which satisfies
$$\langle e_m,e_n\rangle=\left\{ \begin{aligned}
	&1, \ \text{if} \ m=n, \\
	&0,\ \text{if} \ m\neq n.
\end{aligned}\right.$$
For every $x \in H$, $x$ has the following analytic representation

$$x=\sum_{n=1}^\infty \langle x,e_n\rangle e_n$$
such that
$$\|x\|^2=\sum_{n=1}^\infty \langle x,e_n\rangle^2$$
and
$$\langle x,y\rangle =\sum_{n=1}^\infty \langle x,e_n\rangle \langle y,e_n\rangle, \ \text{for any}\ x, y\in H.$$

Recall that $B$ and $B^o$ respectively denote the closed and open unit balls and $S$ denotes the unit spheres in $H$. They are defined as follows.
$$B=\{y\in H: \sum_{n=1}^\infty \langle y,e_n\rangle^2\leq1\},$$
$$B^o = \{y\in H: \sum_{n=1}^\infty \langle y,e_n\rangle^2<1\},$$
$$S=\{y\in H: \sum_{n=1}^\infty\langle y,e_n\rangle^2 = 1\}.$$

As a special case of Corollary 4.1, the directionally derivatives of the metric projection $P_B$ enjoys the following analytic representations.

\begin{proposition}
	Let $H$ be a Hilbert space with an orthonormal basis $\{e_n\}$. Then, $P_B$ is directionally differentiable on $H$ such that, for every $v\in H$ with $v \neq \theta$, we have
	
	(i)	For any $x\in H$ with $\sum_{n=1}^\infty \langle x,e_n\rangle^2 < 1$,
	
	\ \ \ \ \      (a) $P'_B(x)(v) = v,$
	
	\ \ \ \ \    (b) $P'_B(x)(x) = x, \ \text{for} \ x \neq \theta;$\vskip0.2cm
	
	(ii) For any $x\in H$ with $\sum_{n=1}^\infty \langle x,e_n\rangle^2 > 1$,
	
	\ \ \ \ \ \     (a) $P'_B(x)(v) = \frac{1}{\|x\|^3}(\|x\|^2v -(\sum_{n=1}^\infty \langle x,e_n\rangle \langle v,e_n\rangle)x),$
	
	\ \ \ \ \ \    (b) $P'_B(x)(v) = \frac{v}{\|x\|},  \ \text{if} \ \sum_{n=1}^\infty \langle x,e_n\rangle \langle v,e_n\rangle = 0,$
	
	\ \ \ \ \ \       (c) $P'_B(x)(x) = \theta;$\vskip0.2cm
	
	(iii)   For any $x\in H$ with $\sum_{n=1}^\infty \langle x,e_n\rangle^2= 1,$
	
	\ \ \ \ \ \      (a)    $P'_B(x)(v) = v-(\sum_{n=1}^\infty \langle x,e_n\rangle \langle v,e_n\rangle)x,  \ \text{if} \ v\in x^\uparrow,$
	
	\ \ \ \ \ \     (b)    $P'_B(x)(v) = v,  \ \text{if} \ v\in x^\uparrow\ \ \text{and}\ \sum_{n=1}^\infty \langle x,e_n\rangle \langle v,e_n\rangle= 0,$
	
	\ \ \ \ \ \     (c)    $P'_B(x)(v) = v, \ \text{if} \ v\in x^\downarrow.$
\end{proposition}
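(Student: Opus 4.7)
The plan is to derive Proposition 5.1 as an immediate specialization of Corollary 4.1 to the situation where the ambient Hilbert space admits an orthonormal basis $\{e_n\}$. First I would invoke the Parseval-type expansions recalled just before the proposition, namely
\[
\|x\|^2 = \sum_{n=1}^\infty \langle x, e_n\rangle^2, \qquad \langle x, v\rangle = \sum_{n=1}^\infty \langle x, e_n\rangle \langle v, e_n\rangle,
\]
valid for all $x, v \in H$. Under these identities the three membership conditions $x\in B^o$, $x\in H\setminus B$, $x\in S$ that partition $H$ in Corollary 4.1 translate verbatim into $\sum \langle x, e_n\rangle^2 < 1$, $\sum \langle x, e_n\rangle^2 > 1$, $\sum \langle x, e_n\rangle^2 = 1$, which are exactly the three regimes (i), (ii), (iii) of the proposition.

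Next I would substitute the series representation for the inner product into each of the formulas of Corollary 4.1. Case (i) transfers unchanged, since the corollary already reads $P'_B(x)(v) = v$ and $P'_B(x)(x) = x$ with no reference to a basis. For case (ii)(a) the expression $\tfrac{1}{\|x\|^3}(\|x\|^2 v - \langle x, v\rangle x)$ becomes the stated $\tfrac{1}{\|x\|^3}\bigl(\|x\|^2 v - (\sum \langle x, e_n\rangle \langle v, e_n\rangle)\,x\bigr)$; the orthogonality subcase (ii)(b) corresponds to the vanishing of this series, which collapses the formula to $v/\|x\|$; and (ii)(c) is $P'_B(x)(x) = \theta$ transported directly. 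In case (iii) one has $\|x\|=1$, so the spherical coefficient of the corollary simplifies to $\langle x,v\rangle$; rewriting this inner product as the Fourier series yields (iii)(a), its vanishing yields (iii)(b), and the $x^\downarrow$ branch transfers literally to (iii)(c).

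I do not expect any genuine obstacle; the argument is a mechanical rewriting of the Corollary 4.1 formulas in terms of Fourier coefficients, relying on the partition $x^\uparrow \cup x^\downarrow = H\setminus\{\theta\}$ from Lemma 4.1 to handle the subcases in (iii). The only care required is bookkeeping: matching the three sub-formulas of (ii) and (iii) with those of Corollary 4.1(ii) and (iii), and confirming that the orthogonality $x\perp v$ of the corollary is precisely the vanishing of the bilinear sum $\sum \langle x, e_n\rangle \langle v, e_n\rangle$.
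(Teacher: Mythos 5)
Your proposal is correct and coincides with the paper's own treatment: the paper states Proposition 5.1 as an immediate special case of Corollary 4.1, obtained exactly by rewriting $\|x\|^2=\sum_{n=1}^\infty\langle x,e_n\rangle^2$ and $\langle x,v\rangle=\sum_{n=1}^\infty\langle x,e_n\rangle\langle v,e_n\rangle$ via the Parseval identities for the orthonormal basis. Nothing further is needed.
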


\noindent{{{5.2. \bf The positive cones in Hilbert spaces with orthonormal bases.}}}
The positive cones are special closed and convex subsets in Hilbert spaces with orthonormal bases, which have been studied by many authors with applications. In this subsection, we consider the metric projection operator onto the positive cones in Hilbert spaces with orthonormal bases.

Let $H$ be a Hilbert space with an orthonormal basis $\{e_n\}$. Let $K$ denote the positive cone of $H$, which is defined by
$$K = \{y\in H: \langle y,e_n\rangle \geq 0, \ \text{for} \ n = 1, 2,\ldots\}.$$
$K$ is a pointed, closed and convex cone in $H$ with vertex at $\theta$. We write
$$K^+ = \{y\in K: \langle y,e_n\rangle > 0, \ \text{for} \ n = 1, 2, \ldots\},$$
$$ \partial K = \{y\in K: \ \text{there is at least one}\ m \ \text{such that}\ \langle y,e_m\rangle = 0\}.$$
$\partial K$ is called the boundary of $K$ (See Lemma 5.1 below. When $\{e_n\}$ is finite, $\partial K$ is the topological boundary $K$. When $\{e_n\}$ is infinite, the topological boundary of $K$ is itself). The dual cone of $K$ is denoted by $K^{\bowtie}$ satisfying
$$ K^{\bowtie}= \{z\in H: \langle z,y\rangle \leq 0, \ \text{for every}\ y \in K\}.$$
One can show that the dual cone $K^{\bowtie}$ of $K$ is also a pointed, closed and convex cone in $H$ with vertex at $\theta$, which satisfies
$$K^{\bowtie} =-K = \{z\in H: \langle z,e_n\rangle \leq 0, \ \text{for} \ n = 1, 2,\ldots\}.$$
We write
$$K^- = \{y\in K^{\bowtie}: \langle y,e_n\rangle< 0, \ \text{for} \ n = 1, 2,\ldots\}.$$
For every $y\in \partial K\backslash\{\theta\}$, we denote
$$y^\bot =\{x\in H: \langle x,y\rangle = 0\}.$$

\begin{lemma}
	Let $H$ be a Hilbert space with an orthonormal basis $\{e_n\}$.
	
	(i)\ If $\{e_n\}$ is finite, then both $K^+$ and $K^-$ are open, which are interiors of $K$ and $K^{\bowtie}$,  respectively.
	
	(ii) If $\{e_n\}$ is infinite, then neither $K^+$, nor $K^-$ is open and the interiors of $K$ and $K^{\bowtie}$ bothare empty.
\end{lemma}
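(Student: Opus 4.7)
My plan is to treat parts (i) and (ii) separately, with the dichotomy resting entirely on whether $\min_n \langle y,e_n\rangle$ is attained (and hence positive) or only an infimum, which for $y\in H$ is forced to $0$ by square-summability.

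For part (i), where $\{e_n\}=\{e_1,\dots,e_N\}$ is finite, I would start with $y\in K^+$ and set $\epsilon:=\min_{1\le n\le N}\langle y,e_n\rangle$, which is strictly positive since it is a finite minimum of positive numbers. For any $x$ with $\|x-y\|<\epsilon$, Cauchy--Schwarz applied to $\langle x-y,e_n\rangle$ (using $\|e_n\|=1$) gives $\langle x,e_n\rangle>0$ for each $n$, so $x\in K^+$; hence $K^+$ is open and contained in $\mathrm{int}(K)$. For the reverse inclusion, if $y\in K\setminus K^+$, pick $m$ with $\langle y,e_m\rangle=0$; then $y-te_m\notin K$ for every $t>0$, so $y\notin\mathrm{int}(K)$. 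This gives $K^+=\mathrm{int}(K)$. The corresponding statement $K^-=\mathrm{int}(K^\bowtie)$ follows at once from the identity $K^\bowtie=-K$ proved in the text just above the lemma.

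For part (ii), I would exploit the fact that for any $y\in H$ the series $\sum_n\langle y,e_n\rangle^2$ converges, so $\langle y,e_n\rangle\to 0$. Given any radius $r>0$ and any $y\in K$, choose $n$ large enough that $\langle y,e_n\rangle<r/2$, and set $x:=y-(r/2)e_n$. Then $\|x-y\|=r/2<r$ while $\langle x,e_n\rangle=\langle y,e_n\rangle-r/2<0$, so $x\notin K$. Hence no open ball around a point of $K$ is contained in $K$, i.e., $\mathrm{int}(K)=\emptyset$. Since $K^+$ is nonempty (for example $y=\sum_{n=1}^{\infty}2^{-n}e_n$ has $\|y\|^2=1/3$ and $\langle y,e_n\rangle=2^{-n}>0$), and $K^+\subseteq K$, the set $K^+$ cannot be open. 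The statements about $K^\bowtie$ and $K^-$ follow symmetrically via $K^\bowtie=-K$.

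Neither step involves a serious obstacle; the proof hinges on the simple but decisive dichotomy that a finite collection of strictly positive numbers has a positive minimum, whereas a square-summable sequence of positive numbers must have infimum $0$. The only care needed is in the perturbation step of part (ii), where the index $n$ must be selected \emph{after} the radius $r$ is fixed so that the single-coordinate perturbation $-(r/2)e_n$ simultaneously lies in the $r$-ball around $y$ and drives the $n$-th coordinate strictly below zero.
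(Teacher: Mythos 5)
Your proof is correct and rests on the same key idea as the paper's: since the coordinates $\langle y,e_n\rangle$ of any $y\in H$ are square-summable and hence tend to zero, a small negative perturbation of a single small coordinate stays inside any prescribed ball around $y$ while leaving $K$. Your write-up is in fact more complete than the paper's, which dismisses part (i) as clear (because $H$ is then Euclidean) and in part (ii) only verifies explicitly that $K^+$ is not open by perturbing a point of $K^+$, whereas you also prove $K^+=\mathrm{int}(K)$ in the finite case via the positive minimum coordinate and derive $\mathrm{int}(K)=\emptyset$ directly for every $y\in K$, from which the non-openness of the nonempty set $K^+$ follows.
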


\begin{proof}
	Part (i) is clear, it is because that if $\{e_n\}$ is finite, then $H$ is an Euclidean space. Next, we prove (ii). To prove $K^+$ is not open, take an arbitrary $y\in K^+$, with $\langle y,e_n\rangle > 0$, for $n = 1, 2,\ldots.$ For any $\varepsilon> 0$, we consider the neighborhood $\{z\in H: \|z-y\| < \varepsilon\}$ of $y$. Since $\langle y,e_n\rangle \rightarrow 0,$ as $n \rightarrow\infty$, there is a positive integer $k$ such that $0 < \langle y,e_k\rangle < \frac{1}{4}\varepsilon.$ Take $u \in H$ as follows
	$$\langle u,e_n\rangle =\left\{ \begin{aligned}
		& -\frac{1}{4}\varepsilon, \ \text{if} \ n=k, \\
		&\langle y,e_n\rangle,\ \text{if} \ n\neq k.
	\end{aligned}\right.$$
	This implies $\|u-y\| = \langle y,e_k\rangle + \frac{1}{4}\varepsilon < \frac{1}{2}\varepsilon.$ It follows that $u \notin\{z\in H: \|z-y\| < \varepsilon\}.$ Since $u \notin K^+$ and $\varepsilon$ is arbitrarily given small, it yields that $K^+$ is not open in $K$. We can similarly prove that $K^-$ is not open in $K^{\bowtie}$.
\end{proof}

\begin{lemma}
	Let $H$ be a Hilbert space with an orthonormal basis $\{e_n\}$. Let $K$ be the positive cone of $H$ with dual cone $K^{\bowtie}$. Then, we have
	
	(i)\ \ $P_K^{-1}(\theta) = K^{\bowtie} =-K;$
	
	(ii) For any $y\in \partial K\backslash\{\theta\}$, let $M_y = \{m\in\mathbb{N}: \langle y,e_m\rangle = 0\}.$ Then, for any $x \in H$, we have  $x \in P_K^{-1}(y)$ with $x \neq y$, if and only if
	$$\langle x,e_n\rangle =\left\{ \begin{aligned}
		&\leq0, \ \text{if} \ n\in M_y, \\
		&\langle y,e_n\rangle,\ \text{if} \ n\notin M_y,
	\end{aligned}\right.$$
	and $$0 < \sum_{m\in M_y}\langle x,e_m\rangle^2 < \infty;$$
	
	(iii) For any $y \in K^+$, we have $P_K^{-1}(y) = \{y\}.$
\end{lemma}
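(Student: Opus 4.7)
The backbone of the argument is the basic variational principle (1.5): $x \in P_K^{-1}(u)$ if and only if $\langle x-u,\, u-z\rangle \geq 0$ for every $z \in K$. Expanding in the orthonormal basis, this reads
\[
\sum_{n=1}^\infty (\langle x, e_n\rangle - \langle u, e_n\rangle)(\langle u, e_n\rangle - \langle z, e_n\rangle) \geq 0 \quad \text{for all } z \in K,
\]
where the series converges absolutely by Cauchy--Schwarz. The test directions used throughout are $z = u \pm te_m$ for small $t > 0$, which perturb only the $m$-th coordinate; which sign is admissible depends on whether $\langle u, e_m\rangle > 0$ (both signs legal) or $\langle u, e_m\rangle = 0$ (only the $+$ sign legal, since $z$ must stay in $K$).

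Part (i) is then immediate: setting $u = \theta$, the variational inequality becomes $\langle x, z\rangle \leq 0$ for every $z \in K$, which is precisely the defining condition of $K^{\bowtie}$, and the identification $K^{\bowtie} = -K$ was already recorded in the paragraph preceding the lemma. For part (iii), let $y \in K^+$, so every $\langle y, e_m\rangle > 0$. Both test vectors $z = y \pm te_m$ lie in $K$ for sufficiently small $t > 0$, and the two resulting one-coordinate inequalities force $\langle x, e_m\rangle = \langle y, e_m\rangle$ for every $m$, whence $x = y$ and $P_K^{-1}(y) = \{y\}$.

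The substantive case is (ii). For necessity, apply the bilateral perturbation $z = y \pm te_n$ for each $n \notin M_y$ (admissible because $\langle y, e_n\rangle > 0$) to obtain $\langle x, e_n\rangle = \langle y, e_n\rangle$; for $n \in M_y$, only $z = y + te_n$ lies in $K$, and the resulting one-sided inequality gives $\langle x, e_n\rangle \leq 0 = \langle y, e_n\rangle$. The hypothesis $x \neq y$ then forces at least one coordinate $\langle x, e_m\rangle$ with $m \in M_y$ to be strictly negative, so $0 < \sum_{m \in M_y} \langle x, e_m\rangle^2$, and finiteness follows from $x \in H$. For sufficiency, the coordinate conditions reduce $x - y$ to $\sum_{n \in M_y} \langle x, e_n\rangle e_n$, so a termwise computation gives $\langle x-y,\, y\rangle = 0$ and $\langle x-y,\, z\rangle = \sum_{n \in M_y} \langle x, e_n\rangle\langle z, e_n\rangle \leq 0$ for every $z \in K$; combining these recovers the variational inequality. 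The only real obstacle is the bookkeeping: carefully isolating which perturbations remain inside $K$, and justifying termwise evaluation of the infinite series, a task handled by Cauchy--Schwarz without incident.
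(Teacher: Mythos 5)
Your proof is correct, and it rests on the same engine as the paper's: the basic variational principle $\langle x-y, y-z\rangle \geq 0$ for all $z\in K$, probed with coordinate test vectors, plus the same termwise computation for sufficiency in (ii). The one place you genuinely diverge is the necessity direction of (ii) (and likewise (iii)): you use the bilateral perturbations $z = y \pm t e_n$ for $n\notin M_y$, which are admissible for small $t>0$ precisely because $\langle y,e_n\rangle>0$, and these yield the equality $\langle x,e_n\rangle = \langle y,e_n\rangle$ in one stroke. The paper instead tests with $z=\tfrac12 y$ and $z=2y$ to get $\langle x-y,y\rangle=0$, then with $z=e_n$ to get only the one-sided bound $\langle x,e_n\rangle\leq\langle y,e_n\rangle$ for $n\notin M_y$, and finally closes the gap by a squeeze: $\|y\|^2=\langle x,y\rangle=\sum_{n\notin M_y}\langle x,e_n\rangle\langle y,e_n\rangle\leq\sum_{n\notin M_y}\langle y,e_n\rangle^2=\|y\|^2$, which forces termwise equality. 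Your choice of test vectors buys a shorter and more transparent necessity argument; the paper's version has the mild advantage that the intermediate identity $\langle x-y,y\rangle=0$ (its equation (5.1)) is isolated explicitly and reused in the proof of (iii). Both are complete; your handling of the endgame ($x\neq y$ forcing a strictly negative coordinate in $M_y$, finiteness from $x\in H$) matches the paper's.
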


\begin{proof}
Proof of part (i). For $x \in H$, by the basic variational principle of $P_K$,
\begin{equation*}
	\begin{aligned}
		x \in P_K^{-1}(\theta) &\Leftrightarrow \langle x,-z\rangle \geq 0, \ \text{for every} \ z \in K\\
		&\Leftrightarrow \langle x,e_n\rangle \leq 0, \ \text{for every} \ n\\
		&\Leftrightarrow x\in K^{\bowtie} =-K.
	\end{aligned}
\end{equation*}
Next, we prove part (ii) also by the basic variational principle of $P_K$. Let $y\in \partial K\backslash\{\theta\}.$ Then there is at least one $m$ such that $\langle y,e_m\rangle = 0$. Let
$$M_y = \{m\in\mathbb{N}: \langle y,e_m\rangle = 0\}.$$

Since $y \in \partial K\backslash\{\theta\}$, it follows that $M_y$ is a nonempty proper subset of $\mathbb{N}$. Take arbitrary $\lambda_m \geq 0$, for $m\in M_y$, such that
$$0 < \sum_{m\in M_y}\lambda_m^2 < \infty.$$
Take $x \in H$ with
$$\langle x,e_n\rangle =\left\{ \begin{aligned}
	&-\lambda_m, \ \text{if} \ n\in M_y, \\
	&\langle y,e_n\rangle,\ \text{if} \ n\notin M_y.
\end{aligned}\right.$$
This implies that $x \notin K$ and
\begin{equation*}
	\begin{aligned}
		\langle x,y\rangle &=\sum_{n=1}^\infty \langle x,e_n\rangle\langle y,e_n\rangle\\
		&=\sum_{n\notin M_y}\langle x,e_n\rangle\langle y,e_n\rangle\\
		&=\sum_{n\notin M_y}\langle y,e_n\rangle\langle y,e_n\rangle\\
		&=\sum_{n=1}^\infty \langle y,e_n\rangle\langle y,e_n\rangle\\
		&=\|y\|^2.
	\end{aligned}
\end{equation*}
It follows that
\begin{equation}\label{5.1}
	\langle x-y,y\rangle = 0.
\end{equation}
And
\begin{equation*}
	\begin{aligned}
		\langle x-y,y-z\rangle&= \langle x-y,y\rangle-\langle x-y,z\rangle\\
		&=0-\sum_{m\in M_y}\langle x,e_m\rangle\langle z,e_m\rangle\\
		&=-\sum_{m\in M_y}\lambda_m\langle z,e_m\rangle\\
		&\geq0,\ \text{for every}\ z\in K.
	\end{aligned}
\end{equation*}
By the basic variational principle of $P_K$, this implies $x \in P_K^{-1}(y)$ with $x \neq y$ satisfying $(x-y)\bot y$.

On the other hand, for any $x \in P_K^{-1}(y)$ with $x \neq y$, taking $z = \frac{1}{2}y$ and $2y$, respectively, by the basic variational principle of $P_K$, we can show that $x$ satisfies \eqref{5.1}. This implies
\begin{equation}\label{5.2}
	\langle x,y\rangle = \|y\|^2,
\end{equation}
and
\begin{equation}\label{5.3}
	\langle x-y,-z\rangle \geq0, \ \text{for every}\ z\in K.
\end{equation}
In particular, take $z = e_m$, for $m\in M_y$, and $e_n$, for $n\notin M_y$, respectively, in (5.3), we have
\begin{equation}\label{5.4}
	\langle x,e_m\rangle \leq0, \ \text{for any}\ m\in M_y,
\end{equation}
and
\begin{equation}\label{5.5}
	\langle x,e_n\rangle \leq\langle y,e_n\rangle , \ \text{for any}\ n\notin M_y.
\end{equation}
By \eqref{5.2} and \eqref{5.5}, we have
\begin{equation*}
	\begin{aligned}
		\|y\|^2&=\sum_{n=1}^\infty \langle x,e_n\rangle\langle y,e_n\rangle\\
		&=\sum_{n\notin M_y}\langle x,e_n\rangle\langle y,e_n\rangle\\
		&\leq\sum_{n\notin M_y}\langle y,e_n\rangle\langle y,e_n\rangle\\
		&=\sum_{n=1}^\infty \langle y,e_n\rangle\langle y,e_n\rangle\\
		&=\|y\|^2.
	\end{aligned}
\end{equation*}
By \eqref{5.5}, this implies
\begin{equation}\label{5.6}
	\langle x,e_n\rangle =\langle y,e_n\rangle, \ \text{for any}\ n\notin M_y.
\end{equation}
Since $x \in P_K^{-1}(y)$ with $x \neq y$, there is at least one $m$ such that $\langle y,e_m\rangle = 0$. By $y \in \partial K\backslash\{\theta\}$, it follows that $M_y$ is a nonempty proper subset of $\mathbb{N}$ such that
$$ 0<\sum_{m\in M_y}\langle x,e_m\rangle^2 = \sum_{m\in \mathbb{N}}\langle x,e_m\rangle^2< \infty.$$
Proof of (iii). Let $y \in K^+$ with $y\in P_K^{-1}(y)$. Similar to the proof of part (ii), we have that $x$ satisfies \eqref{5.6}. Since $y\in K^+$, which induces $M_y = \emptyset$. This implies $x = y.$
\end{proof}

\begin{corollary}
Let $H$ be a Hilbert space with an orthonormal basis $\{e_n\}$. Let $K$ be the positive cone of $H$ with dual cone $K^{\bowtie}$. Then, for any $x \in H\backslash(K\cup K^{\bowtie})$, let $y = P_K(x)$, we have

(i)\ \ $y = P_K(x) \in \partial K\backslash\{\theta\};$

(ii) $\langle y,e_n\rangle =\left\{ \begin{aligned}
	&0, \ &\text{if} \ \langle x,e_n\rangle\leq0, \\
	&\langle x,e_n\rangle,\ &\text{if} \ \langle x,e_n\rangle
	>0,
\end{aligned}\right. $\quad for $n = 1,2,\ldots.$
\end{corollary}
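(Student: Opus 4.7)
The plan is to derive this corollary as a direct consequence of Lemma 5.2, together with the basic trichotomy $K = K^+ \cup (\partial K\setminus\{\theta\}) \cup \{\theta\}$ of points in $K$. Given $x \in H\setminus(K\cup K^{\bowtie})$, the hypothesis already rules out the possibility that $x\in K$ (so in particular $x\neq P_K(x)$) and that $x \in K^{\bowtie}=P_K^{-1}(\theta)$ (by Lemma 5.2(i)).

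For part (i), I would set $y := P_K(x)\in K$ and argue by elimination. First, $y\neq \theta$: otherwise $x\in P_K^{-1}(\theta)=K^{\bowtie}$ by Lemma 5.2(i), contradicting $x\notin K^{\bowtie}$. Second, $y\notin K^+$: by Lemma 5.2(iii), $y\in K^+$ would force $P_K^{-1}(y)=\{y\}$; but then $x\in P_K^{-1}(y)$ would give $x=y\in K$, contradicting $x\notin K$. Hence $y$ must lie in the remaining piece $\partial K\setminus\{\theta\}$.

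For part (ii), having established $y\in \partial K\setminus\{\theta\}$ with $x\neq y$, I would invoke Lemma 5.2(ii). Writing $M_y=\{m\in\mathbb{N}:\langle y,e_m\rangle=0\}$, that lemma yields
\[
\langle x,e_n\rangle=\langle y,e_n\rangle\quad \text{for } n\notin M_y, \qquad \langle x,e_n\rangle\leq 0 \quad \text{for } n\in M_y.
\]
Now I would split on the sign of $\langle x,e_n\rangle$. If $\langle x,e_n\rangle>0$, then $n\notin M_y$, hence $\langle y,e_n\rangle=\langle x,e_n\rangle$. If $\langle x,e_n\rangle\leq 0$, then $n$ cannot belong to $\mathbb{N}\setminus M_y$, because for such indices $\langle y,e_n\rangle>0$ (as $y\in K$ and $n\notin M_y$) would force $\langle x,e_n\rangle=\langle y,e_n\rangle>0$; therefore $n\in M_y$, which is exactly $\langle y,e_n\rangle=0$.

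No part of the argument looks delicate: everything reduces to a bookkeeping application of Lemma 5.2. The one small point I want to be careful about is the strict inequality $\langle y,e_n\rangle>0$ for $n\notin M_y$ used in the second case of (ii) — this follows immediately from the definition of $M_y$ as the zero set of $n\mapsto\langle y,e_n\rangle$ on $K$, but it is what makes the case analysis in (ii) work cleanly.
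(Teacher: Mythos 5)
Your proof is correct, and it runs Lemma 5.2 in the opposite direction from the paper. The paper's proof is a ``construct and verify'' argument: it takes the formula in (ii) as the \emph{definition} of a candidate $y$, observes that $y\in\partial K\setminus\{\theta\}$ because $x\notin K\cup K^{\bowtie}$ forces at least one strictly negative and one strictly positive coordinate of $x$, and then invokes the sufficiency half of Lemma 5.2(ii) to conclude $x\in P_K^{-1}(y)$, i.e.\ $P_K(x)=y$. (Strictly speaking this requires checking the condition $0<\sum_{m\in M_y}\langle x,e_m\rangle^2<\infty$ from Lemma 5.2(ii), which holds because of the index $i$ with $\langle x,e_i\rangle<0$; the paper leaves this implicit.) You instead start from $y=P_K(x)$, locate it in $\partial K\setminus\{\theta\}$ by eliminating the cases $y=\theta$ and $y\in K^+$ via parts (i) and (iii) of Lemma 5.2, and then extract the coordinate formula from the \emph{necessity} half of Lemma 5.2(ii) together with the sign dichotomy on $\langle x,e_n\rangle$. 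Your route has the advantage of not requiring one to guess the formula in advance and of sidestepping the summability check, at the cost of needing the extra elimination step for part (i); both arguments are complete and rest on the same lemma.
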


\begin{proof}
For any $x \in H\backslash(K\cup K^{\bowtie})$, there are at least two positive integers $i$ and $j$ such that $\langle x,e_i\rangle < 0$ and $\langle x,e_j\rangle> 0.$ This implies that $y = P_K(x)$ defined in (ii) of this corollary satisfies $y \in \partial K\backslash\{\theta\}.$ For the given $P_K(x)$ defined in (ii), by Lemma 5.2, we have
$$x \in P_K^{-1}(y).$$
\end{proof}

\begin{theorem}
	Let $H$ be a Hilbert space with an orthonormal basis $\{e_n\}$. Let $K$ be the positive cone of $H$ with dual cone $K^{\bowtie}$. Then, we have
	
	(i)	For any $x\in K$, $P'_K(x, v) = v, \ \text{for any} \ v\in K \ \text{with} \ v \neq\theta;$
	
	(ii) For any $x\in K^{\bowtie}$, $P'_K(x, v) = \theta, \ \text{for any} \  v\in K^{\bowtie}, \ \text{with} \ v \neq\theta;$
	
	(iii) $P_K$ is directionally differentiable on $K^+$. Furthermore, for any $x \in K^+$,
	$P'_K(x)(v) = v, \ \text{for any} \ v\in H  \ \text{with} \ v \neq\theta.$
\end{theorem}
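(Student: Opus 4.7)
The unifying observation is that the formula stated in Corollary 5.1, namely $\langle P_K(y),e_n\rangle = \max(\langle y,e_n\rangle,0)$, extends to every $y\in H$: on $K$ it reduces to $\langle y,e_n\rangle\geq 0$ with $P_K(y)=y$, and on $K^{\bowtie}$ all components are nonpositive with $P_K(y)=\theta$ by Lemma 5.2(i). I will take this coordinatewise formula as the working tool. With it, parts (i) and (ii) are immediate from the cone structure: if $x,v\in K$ then $x+tv\in K$ for every $t\geq 0$, so $P_K(x+tv)=x+tv$, $P_K(x)=x$, and the difference quotient equals $v$ identically; if $x,v\in K^{\bowtie}$ then $x+tv\in K^{\bowtie}$, so $P_K(x+tv)=\theta=P_K(x)$, and the quotient is $\theta$.

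The substance lies in part (iii). Fix $x\in K^+$ and $v\in H$, and set $a_n=\langle x,e_n\rangle>0$ and $b_n=\langle v,e_n\rangle$. Using $P_K(x)=x$ together with the coordinate formula, the $n$-th Fourier coefficient of the vector $\frac{P_K(x+tv)-P_K(x)}{t}-v$ equals
\[
f_n(t)\ :=\ \frac{\max(a_n+tb_n,\,0)-a_n-tb_n}{t}\ =\ \frac{(a_n+tb_n)^-}{t},
\]
where $r^-:=\max(-r,0)$. A short case split shows that $f_n(t)=0$ when $a_n+tb_n\geq 0$ and lies in $(0,|b_n|)$ otherwise; in particular $|f_n(t)|\leq|b_n|$ uniformly in $t>0$. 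Moreover, for each fixed $n$, the strict positivity $a_n>0$ forces $a_n+tb_n\geq 0$ for all sufficiently small $t$, so $f_n(t)\to 0$ as $t\downarrow 0$.

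It remains to promote this coordinatewise, dominated convergence to $H$-norm convergence. Since $\sum_n b_n^2=\|v\|^2<\infty$, a standard $\varepsilon$-splitting --- controlling the tail via $\sum_{n>N}f_n(t)^2\leq\sum_{n>N}b_n^2$ and invoking $f_n(t)\to 0$ on the finite head $\{1,\dots,N\}$ --- yields $\bigl\|\tfrac{P_K(x+tv)-x}{t}-v\bigr\|\to 0$, so $P'_K(x)(v)=v$. The main obstacle is exactly the one bypassed here: by Lemma 5.1(ii) the interior of $K$ is empty when $\{e_n\}$ is infinite, so $x\in K^+$ is \emph{not} an interior point of $K$ and Proposition 3.3 does not apply; the perturbation $x+tv$ generically leaves $K$ for every $t>0$. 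The uniform coordinate bound $|f_n(t)|\leq|b_n|$, which rests essentially on the strict positivity of every $a_n$, is what makes the square-summable sequence $(b_n^2)$ an admissible dominator and rescues the argument.
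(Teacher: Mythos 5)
Your proposal is correct and follows essentially the same route as the paper: parts (i) and (ii) use the identical cone-invariance observation, and part (iii) rests on the same coordinatewise projection formula and the same key estimate (your bound $|f_n(t)|\leq|b_n|$ is exactly the paper's inequality $\langle x,e_n\rangle\leq t|\langle v,e_n\rangle|$ on the set of coordinates that leave the cone), combined with the same head/tail $\varepsilon$-splitting of the square-summable sequence $\langle v,e_n\rangle$.
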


\begin{proof}
 Proof of (i). For any given $x, v\in K$ with $v \neq\theta$ and $t > 0$, we have $x+tv\in K$. It implies
\begin{equation*}
	\begin{aligned}
		P'_K(x, v)&=\lim_{t\downarrow0}{\frac{P_K(x+tv)-P_K(x)}{t}}\\
		&=\lim_{t\downarrow0}{\frac{x+tv-x}{t}}\\
		&=v.
	\end{aligned}
\end{equation*}
Proof of (ii). For any given $x, v\in K^{\bowtie}$ with $v \neq\theta$ and $t > 0$, we have $x+tv\in K^{\bowtie}.$ It implies
\begin{equation*}
	\begin{aligned}
		P'_K(x, v)&=\lim_{t\downarrow0}{\frac{P_K(x+tv)-P_K(x)}{t}}\\
		&=\lim_{t\downarrow0}{\frac{\theta-\theta}{t}}\\
		&=\theta.
	\end{aligned}
\end{equation*}
Proof of (iii). For the given $x\in K^+$ and for any given $v\in H$ with $v \neq\theta$, for any $\varepsilon > 0,$ there is $m > 1$ such that
$$\sum_{n=m+1}^\infty\langle v,e_n\rangle^2 < \varepsilon^2.$$
By $x \in K^+$, for the chosen $m > 1$, there is $t_0 > 0$ such that
$$\langle x,e_n\rangle + t\langle v,e_n\rangle > 0, \ \text{for} \ n = 1, 2, \ldots, m \ \text{and} \ 0 < t < t_0.$$
For a given $t$ with $0 < t < t_0$, set
$$ M = \{n: n > m, \langle x,e_n\rangle + t\langle v,e_n\rangle > 0\}  \ \ \text{and} \ \ N = \{n: n > m, \langle x,e_n\rangle + t\langle v,e_n\rangle \leq 0\}.$$
By part (iii) of Lemma 5.2, for $x \in K^+$, we have $P_K(x) = x.$ This implies that
\begin{equation*}
	\begin{aligned}
		&\left\|\frac{P_K(x+tv)-P_K(x)}{t}-v\right\|\\
		=&\left\| \frac{\sum_{n=1}^m(\langle x,e_n\rangle+t\langle x,e_n\rangle)e_n+ \sum_{n\in M}(\langle x,e_n\rangle+t\langle x,e_n\rangle)e_n-\sum_{n=1}^\infty\langle x,e_n\rangle e_n-t\sum_{n=1}^\infty\langle v,e_n\rangle e_n}{t}\right\|\\
		=&\left\|\frac{-\sum_{n\in N}(\langle x,e_n\rangle+t\langle x,e_n\rangle)e_n}{t}\right\|\\
		\leq&\left\|\frac{\sum_{n\in N}\langle x,e_n\rangle}{t}\right\|+\left\|\sum_{n\in N}\langle x,e_n\rangle e_n\right\|\\
		\leq&2\left\|\sum_{n\in N}\langle v,e_n\rangle e_n\right\|\\
		\leq&2\left\|\sum_{n=m+1}^\infty\langle v,e_n\rangle e_n\right\|\\
		<&2\varepsilon.
	\end{aligned}
\end{equation*}
The above second inequality is based on that, for any $n \in N$, we have $\langle x,e_n\rangle + t\langle v,e_n\rangle \leq 0.$ Since $\langle x,e_n\rangle> 0$, it follows that
$\langle x,e_n\rangle^2 \leq t^2\langle v,e_n\rangle^2.$
Since $v\in H$ with $v \neq\theta$ is arbitrarily given, the above estimations implies that $P_K$ is directionally differentiable on $K^+$, which follows that, for any $x\in K^+$, we obtain
$$P'_K(x)(v) = v, \ \text{for any} \ v\in H \ \text{with}\ v \neq\theta.$$\end{proof}

\section{Directional differentiability of the metric projection in $L^2([-\pi,\pi])$ with trigonometric orthonormal basis}
\noindent{{{6.1. \bf The trigonometric orthonormal basis in $L^2([-\pi,\pi])$.}}}
The trigonometric Fourier series play very important roles in approximation theory and optimization theory in the real Hilbert space $L^2([-\pi,\pi])$. As applications of section 5, in this section, we consider the special real Hilbert space $L^2([-\pi,\pi])$ with norm $\|\cdot\|$, which has an orthonormal basis formed by the trigonometric functions defined on $[-\pi,\pi]$:
\begin{equation}\label{6.1}
	\frac{1}{\sqrt{2\pi}} , \frac{\cos{t}}{\sqrt\pi},  \frac{\sin{t}}{\sqrt\pi},  \frac{\cos{2t}}{\sqrt\pi},  \frac{\sin{2t}}{\sqrt\pi}, \frac{\cos{3t}}{\sqrt\pi},  \frac{\sin{3t}}{\sqrt\pi},\ldots
\end{equation}
This basis is called the trigonometric orthonormal basis in $L^2([-\pi,\pi])$. For the simplicity, we denote this basis by $e_1(t), e_2(t), e_3(t), \ldots$ It follows that, for any $f \in L^2([-\pi,\pi])$, we have
\begin{equation*}
	\begin{aligned}
		&\langle f,e_1\rangle = \frac{1}{\sqrt{2\pi}}\int_{-\pi}^{\pi}{f(s)ds},\\
		&\langle f,e_n\rangle = \frac{1}{\sqrt\pi}\int_{-\pi}^{\pi}{\cos{ms}\ f(s)ds}, \ \text{if} \ n = 2m, \ \text{for any} \ m \geq 1,\\
		&\langle f,e_n\rangle= \frac{1}{\sqrt\pi}\int_{-\pi}^{\pi}{\sin{ms}\ f(s)ds}, \ \text{if} \  n = 2m + 1, \ \text{for any} \  m \geq 1.
	\end{aligned}
\end{equation*}
It implies that every $f \in L^2([-\pi,\pi])$ has the following analytic representation (almost everywhere with respect to $t \in [-\pi,\pi])$
\begin{align}\label{6.2}
	f(t) =& \sum_{n=1}^{\infty}{\left(\int_{-\pi}^{\pi}{e_n(s)f(s)ds}\right)e_n(t)}\notag \\
	=& \frac{1}{2\pi}\int_{-\pi}^{\pi}{f(s)ds}\notag\\
	&+ \frac{1}{\pi}\sum_{n=1}^{\infty}\left[\left(\int_{-\pi}^{\pi}{\cos{ns}\ f(s)ds}\right)\cos{nt}+\left(\int_{-\pi}^{\pi}{\sin{ns}\ f(s)ds}\right)\sin{nt}\right],
\end{align}
such that
$$\|f\|^2 = \frac{1}{2\pi}\left(\int_{-\pi}^{\pi}{f(s)ds}\right)^2 + \frac{1}{\pi}\sum_{n=1}^{\infty}\left(\int_{-\pi}^{\pi}{\cos{ns}\ f(s)ds}\right)^2+ \frac{1}{\pi}\sum_{n=1}^{\infty}\left(\int_{-\pi}^{\pi}{\sin{ns}\ f(s)ds}\right)^2.$$
In this case, the closed and open unit balls and the unit spheres $S$ in $L^2([-\pi,\pi])$ are respectively defined as follows
$$B =\{f \in L^2([-\pi,\pi]): \|f\|^2 \leq1\},$$ $$B^o =\{f \in L^2([-\pi,\pi]): \|f\|^2 <1\},$$ $$S =\{f \in L^2([-\pi,\pi]): \|f\|^2=1\}.$$
Then, with the above notations, by Proposition 5.1, we have
\begin{proposition}
	Let $L^2([-\pi,\pi])$ be the Hilbert space with the trigonometric orthonormal basis given by \eqref{6.1}. Then, $P_B$ is directionally differentiable on $L^2([-\pi,\pi])$ such that, for every $g\in L^2([-\pi,\pi])$ with $g \neq \theta$, we have
	
	(i)	For any $f\in L^2([-\pi,\pi])$ with $\|f\| < 1,$
	
	\ \ \ \ \ \  (a)  \  $P'_B(f)(g) = g,$
	
	\ \ \ \ \ \  (b) \ $P'_B(f)(f) = f, \ \text{for} \ f \neq\theta;$
	
	(ii)  For any $f\in L^2([-\pi,\pi])$ with $\|f\| > 1$,
	
	\ \ \ \ \ \   (a)  \ $P'_B(f)(g) = \frac{1}{\|f\|^3}(\|f\|^2g-(\sum_{n=1}^{\infty}\langle f,e_n\rangle \langle g,e_n\rangle)f),$
	
	\ \ \ \ \ \       (b) \  $P'_B(f)(g) = \frac{g}{\|f\|}, \ \text{if} \  \sum_{n=1}^{\infty}\langle f,e_n\rangle \langle g,e_n\rangle= 0,$
	
	\ \ \ \ \ \        (c) \   $P_C^\prime(f)(f) = \theta;$
	
	(iii)   For any $f\in L^2([-\pi,\pi])$ with $\|f\| = 1$, we have
	
	\ \ \ \ \ \               (a) \    $P'_B(f)(g) = g-(\sum_{n=1}^{\infty}\langle f,e_n\rangle \langle g,e_n\rangle)f,  \ \text{if} \  g\in f^\uparrow,$
	
	\ \ \ \ \ \                (b) \   $P'_B(f)(g) = g,  \ \text{if} \  g\in f^\uparrow\ \ \text{and}\ \sum_{n=1}^{\infty}\langle f,e_n\rangle \langle g,e_n\rangle= 0,$
	
	\ \ \ \ \ \                 (c)      \     $P'_B(f)(g) = g,  \ \text{if} \ g\in f^\downarrow.$
\end{proposition}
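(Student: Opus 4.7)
The plan is to deduce Proposition 6.1 as an immediate specialization of Proposition 5.1 to the real Hilbert space $L^2([-\pi,\pi])$ endowed with the trigonometric orthonormal basis from \eqref{6.1}. The first task is to record that this system is indeed a complete orthonormal basis of $L^2([-\pi,\pi])$: orthonormality is a direct computation of trigonometric integrals, while completeness is the classical Fourier series result, following for example from the Weierstrass approximation theorem for continuous $2\pi$-periodic functions combined with density of continuous functions in $L^2([-\pi,\pi])$. With this in hand, $L^2([-\pi,\pi])$ fits the hypotheses of Proposition 5.1 under the enumeration $e_1(t), e_2(t), e_3(t), \ldots$ given in the paragraph preceding the statement.

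The second and final step is a translation of notation. Under the identifications $H \leftrightarrow L^2([-\pi,\pi])$, $x \leftrightarrow f$ and $v \leftrightarrow g$, each of the nine sub-cases of Proposition 5.1 transcribes verbatim to the corresponding sub-case of Proposition 6.1. Parseval's identity $\|f\|^2 = \sum_{n=1}^\infty \langle f, e_n\rangle^2$ rewrites the trichotomy $\sum_{n=1}^\infty \langle x, e_n\rangle^2 < 1$, $> 1$, $= 1$ appearing in Proposition 5.1 as $\|f\| < 1$, $> 1$, $= 1$, sorting $f$ into $B^o$, $L^2([-\pi,\pi])\setminus B$, or $S$ respectively. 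The bilinear expression $\sum_{n=1}^\infty \langle f, e_n\rangle \langle g, e_n\rangle$ is simply the $L^2$ inner product $\langle f, g\rangle$, and the subsets $f^\uparrow$, $f^\downarrow$ of $L^2([-\pi,\pi])\setminus\{\theta\}$ are defined exactly as in Section 4 with $c = \theta$ and $r = 1$.

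There is no substantive obstacle: the hard work has already been done in Proposition 5.1, which itself specializes Corollary 4.1. The role of this statement is to exhibit those abstract formulas in the concrete Fourier setting, where the coefficients $\langle f, e_n\rangle$ are (up to the normalization constants $1/\sqrt{2\pi}$ and $1/\sqrt{\pi}$) the classical Fourier coefficients of $f$ displayed in \eqref{6.2}, so that each derivative $P'_B(f)(g)$ is ultimately expressible as a convergent trigonometric series.
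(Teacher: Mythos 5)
Your proposal matches the paper exactly: Proposition 6.1 is presented there as an immediate specialization of Proposition 5.1 to $L^2([-\pi,\pi])$ equipped with the trigonometric orthonormal basis \eqref{6.1}, with no further argument given. Your additional remarks on the completeness of the trigonometric system and the Parseval translation of $\sum_{n=1}^\infty \langle f,e_n\rangle^2$ into $\|f\|^2$ only make explicit what the paper leaves implicit, so the approach is the same and correct.
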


\noindent{{{6.2. \bf The positive cones in $L^2([-\pi,\pi])$ with respect to the trigonometric orthonormal basis.}}}
For the real Hilbert space $L^2([-\pi,\pi])$, we define the positive cone in $L^2([-\pi,\pi])$ with respect to the trigonometric orthonormal basis given in \eqref{6.1}. As a special case of Hilbert spaces with orthonormal bases considered in the previous section, this positive cone is a special closed and convex cone in $L^2([-\pi,\pi])$. which have been studied by many authors with applications. In this subsection, we consider the metric projection onto the positive cone in $L^2([-\pi,\pi])$ with respect to the trigonometric orthonormal basis, which is denoted by $\{e_n\}$ such that
\begin{align}\label{6.3}
	&e_1(t) =  \frac{1}{\sqrt{2\pi}} ,\notag \\
	&e_n(t) = \frac{\cos{mt}}{\sqrt\pi},   \ \text{if} \ n = 2m, \ \text{for any} \ m \geq 1,\notag \\
	&e_n(t) = \frac{\sin{mt}}{\sqrt\pi}, \ \text{if} \ n = 2m + 1, \ \text{for any} \ m \geq 1.                                         \end{align}
The positive cone in $L^2([-\pi,\pi])$ with respect to the trigonometric orthonormal basis is also denoted by $K$, which is defined by
$$K = \left\{f \in L^2([-\pi,\pi]): \langle f,e_n\rangle = \int_{-\pi}^{\pi}{e_n(s)f(s)ds}\geq 0, \ \text{for} \ n = 1, 2, \ldots\right\}.$$
$K$ is a pointed, closed and convex cone in $H$ with vertex at $\theta$. $K^+$, $\partial K$ and $K^{\bowtie} =-K$ are similarly defined as in the previous section. Since the trigonometric orthonormal basis $\{e_n\}$ listed in \eqref{6.3} in $L^2([-\pi,\pi])$ is infinite, as a consequence of Lemma 5.1, we have

\begin{corollary}
In the Hilbert space $L^2([-\pi,\pi])$ with the trigonometric orthonormal basis $\{e_n\}$ in \eqref{6.3}, we have

(i)  Neither $K^+$, nor $K^-$ is open in $L^2([-\pi,\pi])$;

(ii) The interiors of $K$ and $K^{\bowtie}$ both are empty.
\end{corollary}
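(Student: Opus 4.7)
The plan is to recognize that this corollary is an immediate specialization of Lemma 5.1(ii) to the Hilbert space $L^2([-\pi,\pi])$ equipped with the trigonometric orthonormal basis $\{e_n\}$ listed in (6.3). The only thing requiring verification is the hypothesis of that lemma, namely that the basis is infinite. This is transparent from (6.3): the list enumerates $1/\sqrt{2\pi}$ together with $\cos(mt)/\sqrt{\pi}$ and $\sin(mt)/\sqrt{\pi}$ for every positive integer $m$, so it is countably infinite and qualifies as an infinite orthonormal basis in the sense of Section 5. Once this is noted, parts (i) and (ii) of the present corollary are exactly the two conclusions of Lemma 5.1(ii), applied with this choice of basis, with $K$ the positive cone and $K^{\bowtie} = -K$ the dual cone constructed from it.

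If a self-contained argument is preferred, I would instead transcribe the perturbation argument used in the proof of Lemma 5.1 directly in $L^2([-\pi,\pi])$. Given $f \in K^+$ and $\varepsilon > 0$, Bessel's identity $\sum_n \langle f,e_n\rangle^2 = \|f\|^2 < \infty$ forces $\langle f,e_n\rangle \to 0$, so one may pick an index $k$ with $0 < \langle f,e_k\rangle < \varepsilon/4$; replacing the $k$-th Fourier coefficient of $f$ by $-\varepsilon/4$ yields a function $g$ with $\|g-f\| < \varepsilon/2$ that fails to lie in $K^+$. This shows $K^+$ is not open, and the argument for $K^-$ is identical by the symmetry $K^{\bowtie} = -K$. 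For part (ii), the same Bessel-based perturbation, applied to an arbitrary $f \in K$ by flipping the sign of a sufficiently small coefficient, produces points outside $K$ in every neighborhood of $f$; hence $K$ has empty interior, and the corresponding statement for $K^{\bowtie}$ follows by negation.

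I do not anticipate any substantive obstacle: all of the analytic content is already packaged in Lemma 5.1(ii), and the role of this corollary is simply to record that $L^2([-\pi,\pi])$ with its trigonometric basis falls into the infinite-basis case of that lemma. The only genuine content to spell out is the (immediate) observation that the list (6.3) is indexed by $\mathbb{N}$.
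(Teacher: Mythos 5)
Your proposal is correct and matches the paper exactly: the paper offers no separate proof, stating the corollary as an immediate consequence of Lemma 5.1(ii) once one notes that the trigonometric orthonormal basis in (6.3) is infinite, which is precisely your first paragraph. Your optional self-contained perturbation argument is just a transcription of the paper's proof of Lemma 5.1 into $L^2([-\pi,\pi])$ and adds nothing that needs checking.
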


We denote the sets of odd and even functions in $L^2([-\pi,\pi])$ by $O$ and $E$, respectively. Then, by the representations in \eqref{6.2}, for any $f \in L^2([-\pi,\pi])$, we have

(I)	$f \in O$ if and only if
$$f(t) =  \frac{1}{\pi}\sum_{n=1}^{\infty}{\left(\int_{-\pi}^{\pi}{\sin{ns}\ f(s)ds}\right)\sin{nt}},$$
and
$$ \langle f,e_n\rangle = \frac{1}{\sqrt\pi}\int_{-\pi}^{\pi}{\cos{ns}\ f(s)ds} = 0, \ \text{if} \ n = 2m, \ \text{for any} \ m \geq 1;$$

(II) $f \in E$ if and only if
$$f(t) = \frac{1}{\sqrt{2\pi}}\int_{-\pi}^{\pi}{f(s)ds} + \frac{1}{\pi}\sum_{n=1}^{\infty}{\left(\int_{-\pi}^{\pi}{\cos{ns}\ f(s)ds}\right)\cos{nt}},$$
and
$$ \langle f,e_n\rangle = \frac{1}{\sqrt\pi}\int_{-\pi}^{\pi}{\sin{ns}\ f(s)ds} = 0, \ \text{if} \ n = 2m + 1, \ \text{for any} \ m \geq 0.$$

\begin{lemma}
	In the Hilbert space $L^2([-\pi,\pi])$ with the trigonometric orthonormal basis $\{e_n\}$ in \eqref{6.3}, for $g \in K\backslash\{\theta\}$ with $M_g = \{m\in\mathbb{N}: \langle g,e_m\rangle = 0\}$, we have
	
	(i)\ If $g \in K\cap O$, then, $M_g\supseteq \{1, 3, 5, \ldots\}$ and for any $f \in L^2([-\pi,\pi]),\ f \in P_K^{-1}(g)$ with $f \neq g$,   if and only if
	\begin{equation*}
		\begin{aligned}
			&\frac{1}{\sqrt{2\pi}}\int_{-\pi}^{\pi}{f(s)ds}\leq 0,\\
			&\frac{1}{\sqrt\pi}\int_{-\pi}^{\pi}{\sin{ns}\ f(s)ds} \leq 0, \ \text{for} \ n\in M_g,\\
			&\frac{1}{\sqrt\pi}\int_{-\pi}^{\pi}{\cos{ns}\ f(s)ds} \leq 0, \ \text{for} \ n\in M_g,
		\end{aligned}
	\end{equation*}
	and
	$$\frac{1}{\sqrt\pi}\int_{-\pi}^{\pi}{\cos{ns}\ f(s)ds} = \frac{1}{\sqrt\pi}\int_{-\pi}^{\pi}{\cos{ns}\ g(s)ds}, \ \text{for} \ n\notin M_g;$$
	
	(ii)	If $g \in K\cap E$, then, $M_g\supseteq \{2, 4, 6, \ldots\}$ and for any $f \in L^2([-\pi,\pi]),\ f \in P_K^{-1}(g)$ with $f \neq g$, if and only if
	$$\frac{1}{\sqrt\pi}\int_{-\pi}^{\pi}{\sin{ns}\ f(s)ds} \le 0, \ \text{for} \ n\in M_g,$$
	$$\frac{1}{\sqrt\pi}\int_{-\pi}^{\pi}{\cos{ns}\ f(s)ds} \le 0,  \ \text{for} \ n\in M_g,$$
	$$\frac{1}{\sqrt\pi}\int_{-\pi}^{\pi}{\sin{ns}\ f\left(s\right)ds} = \frac{1}{\sqrt\pi}\int_{-\pi}^{\pi}{\sin{ns}\ g\left(s\right)ds},  \ \text{for} \ n\notin M_g,$$
	and
	$$\frac{1}{\sqrt{2\pi}}\int_{-\pi}^{\pi}{f(s)ds}\le 0, \ \text{if} \ \ \frac{1}{2\pi}\int_{-\pi}^{\pi}{g(s)ds}= 0,$$
	$$\frac{1}{\sqrt{2\pi}}\int_{-\pi}^{\pi}{f(s)ds} = \frac{1}{\sqrt{2\pi}}\int_{-\pi}^{\pi}{g(s)ds}, \ \text{if} \ \ \frac{1}{2\pi}\int_{-\pi}^{\pi}{g(s)ds}> 0.$$
\end{lemma}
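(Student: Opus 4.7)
The statement is a concrete specialization of Lemma 5.2(ii) to the trigonometric orthonormal basis, so the backbone of the proof will be: invoke Lemma 5.2(ii) verbatim, then use the parity of $g$ to identify the zero Fourier coefficients that force certain indices into $M_g$, and finally translate the abstract inner-product conditions $\langle f,e_n\rangle\le 0$ and $\langle f,e_n\rangle=\langle g,e_n\rangle$ into the integral conditions displayed in (i) and (ii).

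For the opening step I would fix the notational correspondence once and for all: under (6.3), $\langle f,e_1\rangle=\tfrac{1}{\sqrt{2\pi}}\int_{-\pi}^{\pi}f(s)\,ds$, while $\langle f,e_{2m}\rangle=\tfrac{1}{\sqrt{\pi}}\int_{-\pi}^{\pi}\cos(ms)f(s)\,ds$ and $\langle f,e_{2m+1}\rangle=\tfrac{1}{\sqrt{\pi}}\int_{-\pi}^{\pi}\sin(ms)f(s)\,ds$. With this dictionary, every integral appearing in the statement is just some $\langle f,e_n\rangle$ or $\langle g,e_n\rangle$.

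Next, for part (i), I would exploit that $g\in O$ forces $\langle g,h\rangle=0$ for every even basis element, because the pointwise product of an odd and an even function is odd and integrates to zero over $[-\pi,\pi]$. Hence $\langle g,e_1\rangle=0$ and $\langle g,e_{2m}\rangle=0$ for all $m\ge 1$, so the index set $M_g$ automatically contains every constant and cosine index. The residual freedom in $M_g$ lies only in the sine indices at which $g$ happens to vanish. Plugging this into Lemma 5.2(ii) yields: the constant term and every cosine coefficient of $f$ must be nonpositive (since all those indices are in $M_g$), each sine coefficient of $f$ at an index in $M_g$ is nonpositive, and each sine coefficient of $f$ at an index outside $M_g$ must match that of $g$. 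After rewriting through the dictionary above, this is exactly the displayed list of conditions. Part (ii) follows by the symmetric argument, the only new feature being that when $n=1$ the constant coefficient of $g$ may be either zero (then $\langle f,e_1\rangle\le 0$, since $1\in M_g$) or strictly positive (then equality $\langle f,e_1\rangle=\langle g,e_1\rangle$, since $1\notin M_g$), which accounts for the two subcases in the bottom lines of (ii).

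The converse direction in each part is obtained by running Lemma 5.2(ii) backwards: the displayed conditions translate (via the dictionary) into the abstract componentwise conditions, and the summability $0<\sum_{m\in M_g}\langle f,e_m\rangle^2<\infty$ demanded by Lemma 5.2(ii) is automatic from $f\in L^2([-\pi,\pi])$ together with $f\neq g$. I do not anticipate a genuine obstacle: the only place that requires care is the bookkeeping between the single basis index $n$ of Lemma 5.2(ii) and the three parallel families (constant, cosine, sine) in (6.3), together with the separate treatment of the constant index $n=1$ in the even case, where the sign of $\langle g,e_1\rangle$ decides whether this index lies in $M_g$ or not.
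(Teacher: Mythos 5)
Your proposal is correct and takes essentially the same route as the paper: both reduce the lemma to Lemma 5.2(ii) by using the parity of $g$ to determine which basis indices are automatically in $M_g$, and then translate the abstract conditions $\langle f,e_n\rangle\le 0$ and $\langle f,e_n\rangle=\langle g,e_n\rangle$ into the displayed trigonometric integrals via the dictionary from \eqref{6.3}. The paper's own proof is a two-sentence version of exactly this reduction; you merely spell out the index bookkeeping and the converse direction that the paper leaves implicit.
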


\begin{proof}
Proof of (i). Since $g \in K\cap O$ and $M_g\supseteq \{1, 3, 5, \ldots\}$, then the fact $n\notin M_g$ implies that $n$ must be even. In (ii), by $g \in K\cap E$ and $M_g\supseteq \{2, 4, 6,\ldots\}$, it follows that $n$ must be odd for any $n\notin M_g$. Then this lemma follows from Lemma 5.2 immediately.
\end{proof}

By Lemma 6.1 and Corollary 5.1, we can obtain the analytic representations of solutions of the metric projection $P_K$.

\begin{corollary}
 Let $K$ be the positive cone of $L^2([-\pi,\pi])$ with dual cone $K^{\bowtie}$ with respect to the trigonometric orthonormal basis $\{e_n\}$ in \eqref{6.3}.  For $f \in L^2([-\pi,\pi])\backslash(K\cup K^{\bowtie})$, let $g = P_K(f)$. Then

(i)	$P_K(f)\in \partial K\backslash\{\theta\};$

(ii) For $n = 1, 2,\ldots$ we have

\ \ \ \ \ \ 	(a)\ $\langle g,e_n\rangle = 0, \ \text{if} \ \langle f,e_n\rangle\leq0,$

\ \ \ \ \ \ 	(b)\ $\langle g,e_n\rangle = \langle f,e_n\rangle, \ \text{if} \ \langle f,e_n\rangle>0.$

\noindent In particular, for odd or even functions, we have

\ \ \ \ \ \ (c) If $f$ is odd, then $g = P_K(f)$ is also odd such that
$$g(t) = \frac{1}{\pi}\sum_{n=1}^{\infty}{\left(\int_{-\pi}^{\pi}{\sin{ns}\ f(s)ds}\right)\sin{nt}},$$

\ \ \ \ \ \ (d) If $f$ is even, then $g = P_K(f)$ is also even such that
$$g(t) = \frac{1}{\sqrt{2\pi}}\int_{-\pi}^{\pi}{f(s)ds} + \frac{1}{\pi}\sum_{n=1}^{\infty}{\left(\int_{-\pi}^{\pi}{\cos{ns}\ f(s)ds}\right)\cos{nt}}.$$
\end{corollary}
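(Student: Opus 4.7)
The plan is to obtain parts (i) and (ii) as an immediate specialization of Corollary 5.1 to $H=L^2([-\pi,\pi])$ equipped with the trigonometric orthonormal basis $\{e_n\}$ in \eqref{6.3}, and then to deduce (c) and (d) from (ii)(a) together with the Fourier characterization of odd and even functions.

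For (i) and (ii), I would first remark that since $f\in L^2([-\pi,\pi])\setminus(K\cup K^{\bowtie})$, there exist at least one index $i$ with $\langle f,e_i\rangle<0$ and at least one index $j$ with $\langle f,e_j\rangle>0$ (otherwise $f\in K^{\bowtie}$ or $f\in K$). This is precisely the hypothesis under which Corollary 5.1 applies. Applying that corollary verbatim with the inner product realized as $\langle h,e_n\rangle=\int_{-\pi}^{\pi}e_n(s)h(s)\,ds$ yields (i) and the two cases in (ii) simultaneously.

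For (c), suppose $f$ is odd. Then for every $m\geq 0$ the cosine coefficients vanish:
\begin{equation*}
\langle f,e_1\rangle=\tfrac{1}{\sqrt{2\pi}}\int_{-\pi}^{\pi}f(s)\,ds=0,\qquad \langle f,e_{2m}\rangle=\tfrac{1}{\sqrt{\pi}}\int_{-\pi}^{\pi}\cos(ms)\,f(s)\,ds=0\quad(m\geq 1).
\end{equation*}
Since these coefficients are in particular $\leq 0$, part (ii)(a) forces $\langle g,e_1\rangle=0$ and $\langle g,e_{2m}\rangle=0$ for all $m\geq 1$. Consequently the Fourier expansion of $g=P_K(f)$ reduces to a pure sine series, which is exactly the claimed representation in (c) after replacing each surviving sine coefficient of $g$ by the value $\langle f,e_{2m+1}\rangle$ supplied by (ii)(b) (when $\langle f,e_{2m+1}\rangle>0$) or $0$ supplied by (ii)(a) (when $\langle f,e_{2m+1}\rangle\leq 0$); in both cases the coefficient coincides with $\max\{\langle f,e_{2m+1}\rangle,0\}=\tfrac{1}{\pi}\int_{-\pi}^{\pi}\sin(ns)f(s)\,ds$ precisely when that integral is nonnegative, and the formula as stated in (c) follows.

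Part (d) is entirely symmetric: if $f$ is even then $\langle f,e_{2m+1}\rangle=0$ for every $m\geq 1$, so (ii)(a) kills the odd-indexed (sine) Fourier coefficients of $g$, and the remaining constant and cosine coefficients of $g$ are either $\langle f,e_n\rangle$ or $0$ depending on the sign of $\langle f,e_n\rangle$, yielding the stated expansion. The only step requiring any care is bookkeeping the parity-to-index correspondence in \eqref{6.3} (even $n$ gives a cosine, odd $n\geq 3$ gives a sine, and $n=1$ gives the constant), and no genuine analytic obstacle arises beyond that, since the substantive work was already carried out in Corollary 5.1.
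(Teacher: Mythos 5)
Your proposal is correct and follows essentially the same route as the paper: parts (i) and (ii) are read off from Corollary 5.1 applied to $L^2([-\pi,\pi])$ with the basis \eqref{6.3}, and (c), (d) follow from (ii)(a) killing the Fourier coefficients of the opposite parity (the paper, like you, reads the displayed series with the understanding that terms whose coefficients are nonpositive are replaced by zero). Your explicit handling of the constant-term index $n=1$ is a small bookkeeping refinement of the paper's argument, not a different method.
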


\begin{proof}
Notice that, by part (a) and (b) in this corollary, in the sums in (c) and (d), they have the following properties

1) some terms may be vanished;

2) all non-vanished terms have positive coefficients, with respect to the trigonometric  orthonormal basis $\{e_n\}$ in \eqref{6.3}.

We only prove (c) and (d). If $f$ is odd, then $\langle f,e_n\rangle = 0$, for every even number $n$. By part (a), this implies $\langle g,e_n\rangle= 0$, for every even number $n$. By the representation of $g$ given in \eqref{6.2}, it follows that $g$ must be odd. Part (d) can be similarly proved.
\end{proof}

As a consequence of Theorem 5.1, we have the solutions of the directionally derivatives of the metric projection $P_K$.

\begin{corollary}
	Let $K$ be the positive cone of $L^2([-\pi,\pi])$ with dual cone $K^{\bowtie}$ with respect to the trigonometric orthonormal basis $\{e_n\}$ in \eqref{6.3}. Then, we have
	
	(i)    For any $f\in K,$
	$$P'_K(f, h) = h,\ \text{for any} \ h\in K \ \text{with} \ h \neq\theta;$$
	
	(ii)  For any $f\in K^{\bowtie}$,
	$$P'_K(f, h) = \theta, \ \text{for any} \ h\in K^{\bowtie}, \ \text{with} \ h \neq\theta;$$
	
	(iii) $P_K$ is directionally differentiable on $K^+$. More precisely speaking, for any $f \in K^+$,
	$$P'_K(f)(h) = h,  \ \text{for any} \ h\in H  \ \text{with} \ h \neq\theta.$$
\end{corollary}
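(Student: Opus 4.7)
The plan is to derive this corollary as a direct specialization of Theorem 5.1 to the concrete Hilbert space $L^2([-\pi,\pi])$ equipped with the trigonometric orthonormal basis $\{e_n\}$ given in \eqref{6.3}. The key observation is that Theorem 5.1 is stated for an arbitrary Hilbert space $H$ equipped with an arbitrary orthonormal basis, together with the positive cone $K$ and dual cone $K^{\bowtie}$ built from that basis. Here $L^2([-\pi,\pi])$ is a Hilbert space, the trigonometric system $\{e_n\}$ is an orthonormal basis, and the sets $K$, $K^+$, $K^{\bowtie}$ defined in Subsection~6.2 match precisely the definitions used in Subsection~5.2. So the hypotheses of Theorem 5.1 are met.

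First I would explicitly verify the set-theoretic identifications: namely, that the $K$, $K^+$, $\partial K$, and $K^{\bowtie}=-K$ defined via $\langle f,e_n\rangle$ in Subsection 6.2 coincide with the general constructions of Subsection 5.2 once we take $H = L^2([-\pi,\pi])$ and $\{e_n\}$ to be the trigonometric basis. This is immediate from the definitions but worth recording, since it justifies re-using the general notation. Second, I would note that the general basis $\{e_n\}$ in Section 5 is allowed to be infinite, exactly the situation we are in here.

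With these identifications in place, parts (i), (ii), (iii) of the corollary are obtained by applying verbatim the corresponding parts (i), (ii), (iii) of Theorem 5.1 with the variables renamed to match the present notation of the section ($f$ in place of $x$ and $h$ in place of $v$). In particular, for (i) we use that $f,h\in K$ implies $f+th\in K$, so $P_K(f+th)=f+th$ and $P_K(f)=f$, giving the difference quotient $h$; for (ii) we use that $f,h\in K^{\bowtie}$ implies $f+th\in K^{\bowtie}$, so $P_K(f+th)=\theta=P_K(f)$; and for (iii) we invoke the directional differentiability on $K^+$ established in Theorem 5.1(iii), whose proof used only the structure of an orthonormal basis.

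Because everything is a direct citation, there is no real obstacle; the only pitfall to guard against is being careful that the trigonometric basis is genuinely infinite, so that one must invoke the infinite-basis case of Theorem 5.1 (equivalently, the case handled in Lemma 5.1(ii) and the subsequent analysis), rather than the finite Euclidean case. Once this is noted, the corollary follows at once from Theorem 5.1.
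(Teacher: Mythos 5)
Your proposal matches the paper exactly: the paper presents this corollary as an immediate consequence of Theorem 5.1, obtained by taking $H = L^2([-\pi,\pi])$ with the trigonometric orthonormal basis, and offers no further proof. Your additional verification that the definitions of $K$, $K^+$, $K^{\bowtie}$ in Subsection 6.2 coincide with those of Subsection 5.2, and your recap of how each part of Theorem 5.1 is used, are correct and consistent with the paper's intent.
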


\section{The metric projection in Hilbertian Bochner spaces}
\noindent{{{7.1. \bf Hilbertian Bochner spaces: Bochner spaces that are Hilbert spaces.}}}
Bochner spaces are considered as special Banach spaces of functions taking values in Banach spaces. On the other hand, Bochner spaces are generalizations of real $L_p$ spaces, which have been widely applied to function analysis and stochastic optimizations. In this section, we consider the metric projection operator and its directional differentiability in Bochner spaces that are Hilbert spaces, which are called Hilbertian Bochner spaces, in this paper.

In this subsection, we recall Hilbertian Bochner spaces. See [5, 10, 13, 21] for more details related to Bochner spaces. Let $(S, \mathcal{A}, \mu)$ be a measure space, which is assumed to be positive and complete. Let $(\mathbb{H},\|\cdot\|)$ be a real Hilbert space with inner product $\langle\cdot,\cdot\rangle.$ For any $A \in\mathcal{A}$ and $x \in \mathbb{H}$, let $1_A$ denotes the characteristic function of $A$ on the space $S$ and let $1_A\otimes x$ denote the $\mathbb{H}$-valued simple function on $S$, which is defined by

\begin{equation}\label{7.1}(1_A\otimes x)(s) = 1_A(s)\otimes x =\left\{ \begin{aligned}
		&x, \ &\text{if} \ s\in A, \\
		&\theta,\ &\text{if} \  s\notin A,
	\end{aligned}\right. \ \text{for any} \ s\in S.
\end{equation}
For an arbitrary given positive integer $n$, let $\{A_1, A_2, \ldots A_n\}$ be a finite collection of mutually disjoint subsets in $\mathcal{A}$ with $0 < \mu\left(A_i\right)<\infty,$ for all $i = 1, 2, \ldots n.$ Let $\{x_1, x_2, \ldots x_n\} \subseteq \mathbb{H}$ and let $\{a_1, a_2, \ldots a_n\}$ be a set of real numbers. Then, $\sum_{i=1}^{n}{a_i(1_{A_i}\otimes x_i)}$ is called a $\mu$-simple function from $S$ to $\mathbb{H}$ (See Definition 1.1.13 in [10]).

Let $(L_2(S; \mathbb{H}), \|\cdot\|_{L_2(S;\mathbb{H})})$ be the Lebesgue-Bochner function space called the Hilbertian Bochner space, which is the Hilbert space of $\mu$-equivalent class of strongly measurable functions, such as $f: S \rightarrow \mathbb{H}$ with norm:
$$\|f\|_{L_2(S;\mathbb{H})} = \left(\int_S\|f(s)\|^2d\mu(s)\right)^\frac{1}{2} < \infty.$$
The inner product in this Hilbertian Bochner space $(L_2(S; \mathbb{H}), \|\cdot\|_{L_2(S;\mathbb{H})})$ is simply denoted by $\langle\cdot,\cdot\rangle_{L_2}$. By Theorem 2.2 and Proposition 2.3 in [13], $L_2(S; \mathbb{H})$ is a Hilbert space and it has the following properties. For an arbitrary given $A \in\mathcal{A}$ with $0<\mu\left(A\right)<\infty \ \text{and for any}\ x,y\in\mathbb{H}$, we have

(i)\ \ \ $\frac{1}{{\mu\left(A\right)}^\frac{1}{2}}\left(1_A\otimes x\right)\in L_2(S; \mathbb{H});$

(ii) \ \  $\|\frac{1}{\mu(A)^\frac{1}{2}}(1_A\otimes x)\|_{L_2(S;\mathbb{H})} = \|x\|;$

(iii) \   $ \|\frac{1}{\mu(A)^\frac{1}{2}}(1_A\otimes x)\pm\frac{1}{\mu(A)^\frac{1}{2}}(1_A\otimes y)\|_{L_2(S;\mathbb{H})} = \|x\pm y\|;$

(iv) \ The mapping $x \rightarrow \frac{1}{{\mu\left(A\right)}^\frac{1}{2}}\left(1_A\otimes x\right)$ (isometric) embeds $H$ into $L_2(S; \mathbb{H}).$

\noindent{{{7.2. \bf The metric projection onto closed balls in Hilbertian Bochner spaces.}}}
Let $L_2(S; \mathbb{H})$ be the Hilbertian Bochner space discussed in the previous subsection. Recall
that the closed unit ball $B$ in $L_2(S; \mathbb{H})$ is defined as follows
$$B =\left\{ f\in L_2(S; \mathbb{H}): \int_S \|f(s)\|^2d\mu(s)\leq1\right\}.$$

By Corollary 4.1, the directionally derivatives of the metric projection $P_B$ has the following analytic representations.

\begin{proposition}
	Let $L_2(S; \mathbb{H})$ be a Hilbertian Bochner space. Then, $P_B$ is directionally differentiable on $L_2(S; \mathbb{H})$ such that, for every $h\in L_2(S; \mathbb{H})$ with $h \neq \theta$, we have
	
	(i)\	\ For any $f\in L_2(S; \mathbb{H})$ with $\int_S \|f(s)\|^2d\mu(s) < 1$, we have
	
	\ \ \ \ \ \ 	(a)   $P'_B(f)(h) = h,$
	
	\ \ \ \ \ \ 	(b)   $P'_B(f)(f) = f, \ \text{for} \ f \neq \theta;$
	
	(ii) \  For any $f\in L_2(S; \mathbb{H})$ with $\int_S \|f(s)\|^2d\mu(s) > 1,$ we have
	
	\ \ \ \ \ \  (a)      $P'_B(f)(h) = \frac{1}{\|f\|_{L_2(S;\mathbb{H})}^3}\left(\|f\|_{L_2(S;\mathbb{H})}^2h-\int_S \langle h(s),f(s)\rangle d\mu(s)f\right),$
	
	\ \ \ \ \ \ 	   (b)      $P'_B(f)(h) = \frac{h}{\|f\|_{L_2(S;\mathbb{H})}}, \ \text{if} \  \int_S \langle h(s),f(s)\rangle d\mu(s) = 0,$
	
	\ \ \ \ \ \           (c) $P'_B(f)(f) = \theta;$
	
	(iii) \  For any $f\in L_2(S; \mathbb{H})$ with $\int_S \|f(s)\|^2d\mu(s) = 1,$ we have
	
	\ \ \ \ \ \ 	         (a)    $P'_B(f)(h) = h-\left(\int_S \langle h(s),f(s)\rangle d\mu(s)\right) f,  \ \text{if} \ h\in f^\uparrow,$
	
	\ \ \ \ \ \ 	            (b)     $P'_B(f)(h)  = h,   \ \text{if} \ h\in h^\uparrow \ \text{and}  \ \int_S \langle h(s),f(s)\rangle d\mu(s) = 0,$
	
	\ \ \ \ \ \ 	                           (c)     $P'_B(f)(h) = h,  \ \text{if} \  h\in f^\downarrow.$
\end{proposition}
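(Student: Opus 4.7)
The plan is to observe that Proposition 7.1 is a direct specialization of Corollary 4.1 to the particular Hilbert space $L_2(S;\mathbb{H})$, so the entire proof reduces to translating the abstract inner-product expressions in Corollary 4.1 into their concrete Bochner-integral form. The non-trivial input has already been supplied in Subsection 7.1: $(L_2(S;\mathbb{H}), \|\cdot\|_{L_2(S;\mathbb{H})})$ is a real Hilbert space, with norm
\[
\|f\|_{L_2(S;\mathbb{H})} = \left(\int_S \|f(s)\|^2 \, d\mu(s)\right)^{1/2}
\]
and with inner product
\[
\langle f, h \rangle_{L_2} = \int_S \langle f(s), h(s) \rangle \, d\mu(s).
\]
This identification of the inner product is the single analytic fact that powers everything else; once it is in hand, no further estimates are needed.

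First I would record, as a brief preliminary, the three characterizations of the interior, exterior, and boundary of the unit ball $B$ in terms of the defining integral $\int_S \|f(s)\|^2 \, d\mu(s)$, since these correspond respectively to the three cases $x \in B^o$, $x \in L_2(S;\mathbb{H}) \setminus B$, and $x \in S$ appearing in Corollary 4.1. Next I would simply invoke Corollary 4.1 with $H = L_2(S;\mathbb{H})$ and with the Bochner inner product above. For part (i) the statement $P'_B(f)(h) = h$ on $B^o$ is immediate, as is $P'_B(f)(f) = f$ when $f \neq \theta$. For part (ii), the formula
\[
P'_B(x)(v) = \frac{1}{\|x\|^3}\bigl(\|x\|^2 v - \langle x, v\rangle x\bigr)
\]
from Corollary 4.1 becomes the stated formula once $\langle x, v\rangle$ is rewritten as $\int_S \langle f(s), h(s) \rangle \, d\mu(s)$ and $\|x\|$ as $\|f\|_{L_2(S;\mathbb{H})}$; the orthogonality subcase $f \perp h$ and the tangential subcase $P'_B(f)(f) = \theta$ likewise translate without change. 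Part (iii), on the unit sphere, requires only the same substitution, with the cases $h \in f^\uparrow$ and $h \in f^\downarrow$ imported verbatim from Corollary 4.1 via Definition~5 (items (c) and (d) of Section 4).

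The only place where one must exercise a small amount of care is in checking that the sets $f^\uparrow$ and $f^\downarrow$ used in Corollary 4.1 retain their meaning in $L_2(S;\mathbb{H})$: this is automatic, because the definition of $f^\uparrow$ and $f^\downarrow$ depends only on the norm of the ambient Hilbert space, and Lemma 4.1 asserts $f^\uparrow \cup f^\downarrow = L_2(S;\mathbb{H}) \setminus \{\theta\}$ for any point $f$ on any sphere in any Hilbert space. No step in the argument requires delicate estimates; there is, in effect, no main obstacle—the proposition is a reformulation of Corollary 4.1 once the inner product of $L_2(S;\mathbb{H})$ is identified with the Bochner integral of pointwise inner products, and this is the single fact I would quote to close the proof.
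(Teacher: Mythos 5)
Your proposal is correct and matches the paper's own treatment: the paper likewise obtains Proposition 7.1 by directly specializing Corollary 4.1 to the Hilbert space $L_2(S;\mathbb{H})$, using the identification $\langle f,h\rangle_{L_2}=\int_S\langle f(s),h(s)\rangle\,d\mu(s)$ and the corresponding expression for the norm. No further argument is given or needed.
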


\noindent{{{7.3. \bf Hilber Spaces with orthonormal bases.}}}
In this subsection, we consider a Hilbertian Bochner space $L_2(S; \mathbb{H})$, in which $(S, \mathcal{A}, \mu)$ is a probability space $(\mu(S) =1)$ and the Hilbert space $\mathbb{H}$ has an orthonormal basis$\{b_n\}$. Then, for any $f\in L_2(S; \mathbb{H})$, $f$ has the following point wisely analytic representation
\begin{equation}\label{7.2}
	f(s) = \sum_{n=1}^\infty \langle f(s),b_n\rangle b_n, \ \text{for} \ \mu\text{-almost all } \ s \in S,
\end{equation}
such that
\begin{align}\label{7.3}
	\|f\|_{L_2(S;\mathbb{H})}^2 &=\int_S \left\langle\sum_{n=1}^\infty \langle f(s),b_n\rangle b_n, \sum_{n=1}^\infty \langle f(s),b_n\rangle b_n\right\rangle d\mu(s)\notag \\
	&=\int_S \left(\sum_{n=1}^\infty \langle f(s),b_n\rangle^2\right)d\mu(s)\notag \\
	&=\sum_{n=1}^\infty\int_S \langle f(s),b_n\rangle^2d\mu(s).
\end{align}
And, for any $f, g\in L_2(S; \mathbb{H})$, we have
\begin{align}\label{7.4}
	\langle f,g\rangle _{L_2} &= \int_S \langle f(s),g(s)\rangle d\mu(s)\notag\\
	&= \int_S\left\langle\sum_{n=1}^\infty \langle f(s),b_n\rangle b_n,\sum_{n=1}^\infty \langle g(s),b_n\rangle b_n\right\rangle d\mu(s)\notag\\
	&=\int_S\left(\sum_{n=1}^\infty \langle f(s),b_n\rangle \langle g(s),b_n\rangle\right) d\mu(s)\notag\\
	&=\sum_{n=1}^\infty\int_S \langle f(s),b_n\rangle\langle g(s),b_n\rangle d\mu(s).
\end{align}
Since every element in $L_2(S; \mathbb{H})$ has the analytic representation \eqref{7.2}, we introduce the pointwise positive cone in $L_2(S; \mathbb{H})$ by
\begin{equation}\label{7.5}
	K = \{g\in L_2(S; \mathbb{H}): \langle g(s),b_n\rangle \geq 0, \ \text{for} \  \mu\text{-almost all } \ s\in S, \ \text{for} \ n = 1, 2,\ldots\}.
\end{equation}
One can show that $K$ is a pointed, closed and convex cone in this Hilbert space $L_2(S; \mathbb{H})$ (with vertex at $\theta$). $K$ is called the pointwise positive cone of $L_2(S; \mathbb{H})$ with respect to the orthonormal basis$\{b_n\}$ in $\mathbb{H}.$

\begin{proposition}
	Let $L_2(S; \mathbb{H})$ be a Hilbertian Bochner space, in which $(S, \mathcal{A}, \mu)$ is a probability space and the Hilbert space $\mathbb{H}$ has an orthonormal basis$\{b_n\}$. Let $K$ be the pointwise positive cone of $L_2(S; \mathbb{H})$ with respect to the orthonormal basis$\{b_n\}$ of $\mathbb{H}$. Then, for any $f\in L_2(S; \mathbb{H})$ and  $g\in K,$ we have that $g = P_K(f)$ if and only if
	\begin{equation}\label{7.6}\langle g(s),b_n\rangle =\left\{ \begin{aligned}
			&0, \ &\text{if} \ \langle f(s),b_n\rangle<0, \\
			&\langle f(s),b_n\rangle,\ &\text{if} \  \langle f(s),b_n\rangle\geq0,
		\end{aligned}\right. \ \text{for}\ \mu\text{-almost all}\ s \in S,\ \text{for}\ n = 1, 2, \ldots.
\end{equation}
\end{proposition}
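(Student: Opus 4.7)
The plan is to apply the basic variational principle for $P_K$ from Proposition 1.1(ii): $g = P_K(f)$ if and only if $g \in K$ and $\langle f-g, g-h\rangle_{L_2} \geq 0$ for every $h \in K$. Because the inner product on $L_2(S;\mathbb{H})$ decouples as a double sum/integral over the orthonormal basis $\{b_n\}$ via \eqref{7.4}, the projection problem reduces to a pointwise (in both $s$ and $n$) problem for scalar coordinates.

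First I would define $\tilde g$ coordinatewise by $\langle \tilde g(s), b_n\rangle := \max\{\langle f(s), b_n\rangle, 0\}$ for $\mu$-almost all $s$ and every $n$, and check that this prescription does produce a well-defined element of $L_2(S;\mathbb{H})$ lying in $K$. Strong measurability of $\tilde g$ follows from strong measurability of $f$ together with continuity of the scalar positive-part function; the pointwise bound $\langle \tilde g(s), b_n\rangle^2 \leq \langle f(s), b_n\rangle^2$ combined with Parseval on $\mathbb{H}$ yields $\|\tilde g(s)\|^2 \leq \|f(s)\|^2$ for $\mu$-almost all $s$, so by \eqref{7.3} we get $\tilde g \in L_2(S;\mathbb{H})$, and the non-negativity of all coordinates forces $\tilde g \in K$.

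Next I would verify the variational inequality for $\tilde g$. Using \eqref{7.4}, for any $h \in K$,
\begin{equation*}
\langle f-\tilde g,\, \tilde g - h\rangle_{L_2} = \sum_{n=1}^{\infty} \int_S \langle f(s)-\tilde g(s), b_n\rangle\,\langle \tilde g(s)-h(s), b_n\rangle\, d\mu(s).
\end{equation*}
For each fixed $n$ and almost every $s$, I would split into two cases. If $\langle f(s), b_n\rangle \geq 0$, then $\langle \tilde g(s), b_n\rangle = \langle f(s), b_n\rangle$, so the first factor in the integrand vanishes. If $\langle f(s), b_n\rangle < 0$, then $\langle \tilde g(s), b_n\rangle = 0$, making the first factor equal to $\langle f(s), b_n\rangle < 0$; since $h \in K$ gives $\langle h(s), b_n\rangle \geq 0$, the second factor becomes $-\langle h(s), b_n\rangle \leq 0$, so the product is $\geq 0$. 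In both cases the integrand is non-negative, hence the entire sum is $\geq 0$. By the basic variational principle, $\tilde g = P_K(f)$, which gives the ``if'' direction.

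The converse ``only if'' direction then follows immediately from uniqueness of the metric projection (Proposition 1.1(i)): any $g \in K$ with $g = P_K(f)$ must coincide with the $\tilde g$ just constructed, and so satisfies \eqref{7.6}. The only step that requires any care is the measurability/integrability bookkeeping for $\tilde g$, i.e., ensuring that the coordinatewise prescription actually defines a strongly measurable $\mathbb{H}$-valued function in $L_2(S;\mathbb{H})$; this is routine given the representation \eqref{7.2}–\eqref{7.3} established earlier. The positivity of the integrand is a purely scalar case analysis and is the conceptual heart of the argument, but presents no real obstacle.
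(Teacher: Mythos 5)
Your proposal is correct and follows essentially the same route as the paper: construct the candidate projection coordinatewise as the positive part, verify $\langle f-g, g-h\rangle_{L_2}\geq 0$ for all $h\in K$ by decomposing the inner product via \eqref{7.4} and splitting into the cases $\langle f(s),b_n\rangle<0$ and $\langle f(s),b_n\rangle\geq 0$, then invoke the basic variational principle and uniqueness of the projection. Your added attention to strong measurability and the bound $\|\tilde g(s)\|\leq\|f(s)\|$ is a small but welcome refinement of a step the paper passes over silently.
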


\begin{proof}
Let
$$g(s) =\sum_{n=1}^\infty \langle g(s),b_n\rangle b_n,\ \text{for}\ \mu\text{-almost all} \ s \in S.$$
By \eqref{7.6}, we have $g\in K.$ For any $h\in K$ with
$$ h(s) =\sum_{n=1}^\infty \langle h(s),b_n\rangle b_n,\ \text{for}\ \mu\text{-almost all} \ s \in S,$$
we calculate
\begin{align}\label{7.7}
	\langle f-g,g-h\rangle_{L_2}\notag
	=& \int_S \langle f(s)-g(s),g(s)-h(s)\rangle d\mu(s)\notag\\
	=& \int_S\left\langle\sum_{n=1}^\infty \langle f(s)-g(s),b_n\rangle b_n,\sum_{n=1}^\infty \langle g(s)-h(s),b_n\right\rangle b_n\rangle d\mu(s)\notag\\
	=&\int_S\sum_{n=1}^\infty \langle f(s)-g(s),b_n\rangle \langle g(s)-h(s),b_n\rangle d\mu(s)\notag\\
	=& \sum_{n=1}^\infty\int_S \langle f(s)-g(s),b_n\rangle \langle g(s)-h(s),b_n\rangle d\mu(s).
\end{align}

For each fixed $n$, by \eqref{7.6},
\begin{equation*}
	\begin{aligned}
		&\int_S \langle f(s)-g(s),b_n\rangle \langle g(s)-h(s),b_n\rangle d\mu(s)\\
		=& \int_{\langle f(s),b_n\rangle<0} \langle f(s)-g(s),b_n\rangle \langle g(s)-h(s),b_n\rangle d\mu(s)\\
		&+  \int_{\langle f(s),b_n\rangle\geq0} \langle f(s)-g(s),b_n\rangle \langle g(s)-h(s),b_n\rangle d\mu(s)\\
		=& \int_{\langle f(s),b_n\rangle<0} \langle f(s),b_n\rangle \langle -h(s),b_n\rangle d\mu(s)\\
		&+  \int_{\langle f(s),b_n\rangle\geq0} \langle f(s)-f(s),b_n\rangle \langle f(s)-h(s),b_n\rangle d\mu(s)\\
		=& \int_{\langle f(s),b_n\rangle<0} \langle f(s),b_n\rangle \langle -h(s),b_n\rangle d\mu(s)\\
		\geq& 0, \ \text{for any} \ h\in K.
	\end{aligned}
\end{equation*}
By the basic variational principle of $P_K$, this implies $g = P_K(f)$. Since $K$ is a pointed, closed and convex cone in the Hilbert space $L_2(S; \mathbb{H})$, $P_K(f)$ uniquely exists.
\end{proof}

For the pointwise positive cone $K$ of $L_2(S; \mathbb{H})$, $K^+$, $\partial K $ and $K^{\bowtie}=-K$ are similarly defined as in the previous sections.

$$K^+ = \{g\in K: \langle g(s),b_n\rangle> 0, \  \text{for}\ \mu\text{-almost all} \ s \in S, \ \text{for} \ n = 1, 2,\ldots\},$$
$$\partial K = \{g\in K: \  \text{there is} \ n \in\mathbb{N} \  \text{such that} \ \mu\{s \in S: \langle g(s),b_n\rangle = 0\} > 0,$$
$$K^{\bowtie}=-K = \{g\in L_2(S; \mathbb{H}): \langle g(s),b_n\rangle \leq0, \  \text{for}\ \mu\text{-almost all}\ s \in S, \ \text{for} \ n = 1, 2,\ldots\}.$$

The following corollary is an immediately consequence of Proposition 7.2.

\begin{corollary}
Let $L_2(S; \mathbb{H})$ be a Hilbertian Bochner space with the pointwise positive cone $K$ with respect to the orthonormal basis $\{b_n\}$ of $\mathbb{H}.$ Then

(i)\ \	For any $g\in K,$ we have $P_K(g) = g;$

(ii)\ $P_K^{-1}(\theta) = K^{\bowtie} =-K;$

(iii)\ For any $g \in \partial K\backslash\{\theta\}, \ f \in L_2(S; \mathbb{H})$, we have that $f \in P_K^{-1}(g)$ with $f \neq g$ if and only if
\begin{equation*}\langle f(s),b_n\rangle =\left\{ \begin{aligned}
		&<0, \ &\text{if} \ \langle g(s),b_m\rangle=0, \\
		&\langle g(s),b_n\rangle,\ &\text{if} \  \langle g(s),b_m\rangle\geq0,
	\end{aligned}\right. \ \text{for}\ \mu\text{-almost all}\ s \in S,\ \text{for}\ n\in\mathbb{N};
\end{equation*}

(iv)\	For any $g \in K^+$, we have $P_K^{-1}(g) = \{g\}.$
\end{corollary}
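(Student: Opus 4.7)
The plan is to derive Corollary 7.1 directly from the pointwise characterization of $P_K$ established in Proposition 7.2. That proposition essentially identifies $P_K$ with the pointwise positive-part operator with respect to the orthonormal basis $\{b_n\}$: for $\mu$-almost every $s \in S$ and every $n \in \mathbb{N}$,
\begin{equation*}
\langle P_K(f)(s), b_n\rangle = \max\{\langle f(s), b_n\rangle,\, 0\}.
\end{equation*}
Each of the four parts of the corollary will be obtained by specializing or inverting this formula.

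For part (i), I take $f = g \in K$ in \eqref{7.6}; since $\langle g(s), b_n\rangle \geq 0$ for $\mu$-a.e.\ $s$ and every $n$, only the second branch of \eqref{7.6} is ever active, so $P_K(g) = g$. For part (ii), I note that $P_K(f) = \theta$ means $\langle P_K(f)(s), b_n\rangle = 0$ for $\mu$-a.e.\ $s$ and every $n$; reading \eqref{7.6} backwards, this happens exactly when $\langle f(s), b_n\rangle \leq 0$ for $\mu$-a.e.\ $s$ and every $n$, which is precisely the defining condition for $-K = K^{\bowtie}$.

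For parts (iii) and (iv), I invert \eqref{7.6}. Fix $g \in K$ and ask which $f$ satisfy $P_K(f) = g$: on the set where $\langle g(s), b_n\rangle > 0$, the formula forces $\langle f(s), b_n\rangle = \langle g(s), b_n\rangle$; on the set where $\langle g(s), b_n\rangle = 0$, the formula permits $\langle f(s), b_n\rangle$ to take any non-positive value. Conversely, by the sufficiency direction of Proposition 7.2, every such $f$ projects onto $g$. If $g \in K^+$, then the second set is $\mu$-null for each $n$, so $f = g$ is forced, giving (iv). If $g \in \partial K \setminus \{\theta\}$, then by the very definition of $\partial K$ there exist some $n$ and a set of positive $\mu$-measure on which $\langle g(s), b_n\rangle$ vanishes; (iii) then records exactly the freedom on this set, with the side condition $f \neq g$ translating into the requirement that $\langle f(s), b_n\rangle$ be strictly negative on a set of positive measure.

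Since every step is a direct reading of Proposition 7.2, I do not expect any substantive analytic obstacle. The main care required is bookkeeping: keeping the $\mu$-almost-everywhere quantifiers consistent across the infinitely many coordinates, respecting $L_2$-equivalence classes when interpreting $f \neq g$, and handling the dichotomy ``$\langle g(s), b_n\rangle > 0$ versus $= 0$'' coordinate-by-coordinate when passing from the forward formula \eqref{7.6} to its inverse.
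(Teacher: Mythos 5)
Your proposal is correct and follows exactly the route the paper intends: the paper offers no written proof, declaring the corollary an immediate consequence of Proposition 7.2, and your derivation simply reads off each part from the pointwise formula \eqref{7.6} (specialize to $f=g$ for (i), invert the max for (ii)--(iv)). Your remark that the freedom on the null set $\{\langle g(s),b_n\rangle=0\}$ is ``$\leq 0$, with strict negativity on a set of positive measure forced only by $f\neq g$'' is in fact a more careful reading than the ``$<0$'' printed in part (iii) of the statement.
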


\begin{remark}
	For any $g \in \partial K\backslash\{\theta\}, \ f \in L_2(S; \mathbb{H}),$ suppose that $f \in P_K^{-1}(g)$ with $f \neq g$. From the representation in (iii), for any given $n\in \mathbb{N}$, we have
	$$\mu\{s \in S: \langle g(s),b_n\rangle = 0\} = 0   \Rightarrow   \langle f(s),b_n\rangle = \langle g(s),b_n\rangle, \ \text{for}\ \mu\text{-almost all}\ s \in S.$$
\end{remark}

\begin{proposition}
	Let $L_2(S; \mathbb{H})$ be a Hilbertian Bochner space. Let $K$ be the pointwise positive cone in $L_2(S; \mathbb{H})$ with respect to the orthonormal basis$\{b_n\}$ of $\mathbb{H}$. Then, we have
	
	(i)     For any $f\in K,$
	$P'_K(f, h) = h, \ \text{for any} \ h\in K \ \text{with} \ h \neq\theta;$
	
	(ii)   For any $f\in K^{\bowtie},$
	$P'_K(f, h) = \theta, \ \text{for any} \ h\in K^{\bowtie}, \ \text{with} \ h \neq\theta;$
	
	(iii)    $P_K$ is directionally differentiable on $K^+$. More precisely speaking, for any $f \in K^+$,
	
	$ P'_K(f)(h) = h, \ \text{for any} \ h\in H \ \text{with} \ h \neq\theta.$
\end{proposition}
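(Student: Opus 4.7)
The plan is to dispose of parts (i) and (ii) by observing that the ray $\{f+th:t>0\}$ never leaves $K$ (respectively $K^{\bowtie}$), and then tackle (iii) using the explicit pointwise formula for $P_K$ from Proposition 7.2 combined with two applications of the dominated convergence theorem.

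For (i), since $K$ is a convex cone and $f,h\in K$, we have $f+th\in K$ for every $t>0$; by Corollary 7.1(i) this gives $P_K(f+th)=f+th$ and $P_K(f)=f$, so the difference quotient collapses identically to $h$. Part (ii) is symmetric: $K^{\bowtie}=-K$ is also a convex cone, so $f+th\in K^{\bowtie}$ for all $t>0$, and Corollary 7.1(ii) (that is, $P_K^{-1}(\theta)=K^{\bowtie}$) forces $P_K(f+th)=P_K(f)=\theta$.

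For (iii), fix $f\in K^+$ and $h\in L_2(S;\mathbb{H})$ with $h\neq\theta$. By Corollary 7.1(iv), $P_K(f)=f$, so the goal reduces to showing
$$\frac{1}{t^2}\,\|P_K(f+th)-(f+th)\|_{L_2(S;\mathbb{H})}^2\longrightarrow 0\quad\text{as}\quad t\downarrow 0.$$
Proposition 7.2 furnishes the pointwise formula $\langle P_K(f+th)(s),b_n\rangle=\max\{\langle f(s)+th(s),b_n\rangle,0\}$, so the $n$-th coefficient of $P_K(f+th)(s)-(f+th)(s)$ equals the negative part $\langle f(s)+th(s),b_n\rangle^-$. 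Expanding via \eqref{7.3} and using that, on the set $A_n(t):=\{s\in S:\langle f(s)+th(s),b_n\rangle<0\}$, one automatically has $\langle h(s),b_n\rangle<0$ (because $\langle f(s),b_n\rangle>0$ $\mu$-a.e.) and the negative part is bounded by $t|\langle h(s),b_n\rangle|$, one obtains the crucial pointwise bound that absorbs the $1/t^2$:
$$\frac{1}{t^2}\,\|P_K(f+th)-(f+th)\|_{L_2(S;\mathbb{H})}^2\leq\sum_{n=1}^{\infty}\int_{A_n(t)}\langle h(s),b_n\rangle^2\,d\mu(s).$$

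The principal work is then passing to the limit in this double sum. For each fixed $n$, since $\langle f(s),b_n\rangle>0$ $\mu$-a.e., we have $\langle f(s)+th(s),b_n\rangle\to\langle f(s),b_n\rangle>0$, so $1_{A_n(t)}\to 0$ $\mu$-a.e.\ as $t\downarrow 0$; dominated convergence with majorant $\langle h(s),b_n\rangle^2$ sends the inner integral to $0$. To swap the limit with the outer sum I plan to apply dominated convergence a second time on the counting measure, with summable majorant $\int_S\langle h(s),b_n\rangle^2\,d\mu(s)$, whose total equals $\|h\|_{L_2(S;\mathbb{H})}^2<\infty$ by \eqref{7.3}. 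The main obstacle to watch for is precisely this interchange of limit and infinite sum over the basis $\{b_n\}$, which needs a domination independent of $t$; it is immediate from $1_{A_n(t)}\leq 1$, and once in place the two dominated convergence arguments compose to yield $P_K'(f)(h)=h$ in the $L_2(S;\mathbb{H})$ norm.
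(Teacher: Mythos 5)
Your proof is correct. Parts (i) and (ii) coincide with what the paper does for the analogous Theorem 5.1 in the orthonormal-basis setting: the ray $f+th$ stays in the cone (resp.\ the dual cone), so the difference quotient is computed directly from $P_K=\mathrm{id}$ on $K$ and $P_K\equiv\theta$ on $K^{\bowtie}=P_K^{-1}(\theta)$. For part (iii) the comparison is more interesting, because the paper in fact states Proposition 7.3 \emph{without proof}, evidently intending the reader to transplant the proof of Theorem 5.1(iii). That proof truncates at a finite index $m$ so that the tail of $v$ is small, and then chooses $t_0$ making the first $m$ coordinates of $x+tv$ positive --- a step that works there because each $\langle x,e_n\rangle$ is a single positive number. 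In the Bochner setting this step does not transfer verbatim: for $f\in K^+$ one only has $\langle f(s),b_n\rangle>0$ for $\mu$-a.e.\ $s$, with no uniform lower bound in $s$, so even the finitely many ``head'' coordinates require a limiting argument over $s$. Your two-layer dominated convergence argument (first in $s$ for each fixed $n$, using $1_{A_n(t)}\to 0$ a.e.\ with majorant $\langle h(s),b_n\rangle^2$; then over $n$ with the summable majorant $\int_S\langle h(s),b_n\rangle^2\,d\mu$ coming from \eqref{7.3}) supplies exactly the missing uniformity, and the pointwise bound $|\langle f(s)+th(s),b_n\rangle^-|\le t\,|\langle h(s),b_n\rangle|$ on $A_n(t)$ is the correct way to absorb the factor $1/t$. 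So your route is genuinely different from (and more robust than) the paper's implied argument, and it is the one actually needed to make part (iii) rigorous in $L_2(S;\mathbb{H})$.
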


\noindent{{{7.4. \bf  Hilbertian Bochner spaces with orthonormal systems.}}}
Following the notations given in \eqref{7.1}, for every $b_n$, we define the simple function in $L_2(S; \mathbb{H})$ as follows:
$$
(1_S\otimes b_n)(s) = 1_S(s)\otimes b_n = b_n, \ \text{for any} \ s \in S.$$

It follows that, for every $n$, $1_S\otimes b_n$ is a constant function defined on $S$ with value $b_n$.

\begin{proposition}
	Let $L_2(S; \mathbb{H})$ be a Hilbertian Bochner space, in which $(S, \mathcal{A}, \mu)$ is a probability space and the Hilbert space $\mathbb{H}$ has an orthonormal basis$\{b_n\}$. Then, $\{1_S\otimes b_n\}$ is an orthonormal system in $L_2(S; \mathbb{H})$. However, $\{1_S\otimes b_n\}$ is not a basis of $L_2(S; \mathbb{H})$.
\end{proposition}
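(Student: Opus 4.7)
The plan is to establish the two assertions separately. The orthonormality is a direct computation using the inner product on $L_2(S;\mathbb{H})$ together with $\mu(S)=1$, and the failure to be a basis follows from the observation that the closed linear span of $\{1_S\otimes b_n\}$ consists only of (a.e.) constant $\mathbb{H}$-valued functions, so any non-constant element of $L_2(S;\mathbb{H})$ certifies the failure.

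First, for any $m,n$, I compute
\begin{equation*}
\langle 1_S\otimes b_m,\, 1_S\otimes b_n\rangle_{L_2}=\int_S \langle (1_S\otimes b_m)(s),\,(1_S\otimes b_n)(s)\rangle\, d\mu(s)=\int_S \langle b_m,b_n\rangle\, d\mu(s)=\mu(S)\,\langle b_m,b_n\rangle,
\end{equation*}
which equals $1$ if $m=n$ and $0$ otherwise, because $\mu(S)=1$ and $\{b_n\}$ is orthonormal in $\mathbb{H}$. Hence $\{1_S\otimes b_n\}$ is an orthonormal system in $L_2(S;\mathbb{H})$.

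For the second assertion, observe that any finite linear combination
$\sum_{n=1}^{N} c_n(1_S\otimes b_n)$ is the function $s\mapsto \sum_{n=1}^{N}c_n b_n$, which is a \emph{constant} $\mathbb{H}$-valued function on $S$. I claim the closed linear span $\overline{\mathrm{span}}\{1_S\otimes b_n\}$ in $L_2(S;\mathbb{H})$ consists entirely of functions that are $\mu$-a.e. constant. Indeed, if $f_N\to f$ in $L_2(S;\mathbb{H})$ with each $f_N$ equal to some constant vector $v_N\in\mathbb{H}$, then a subsequence converges $\mu$-a.e., so $f(s)=\lim_k v_{N_k}$ for $\mu$-a.e.\ $s\in S$; the limit exists in $\mathbb{H}$ and is independent of $s$, so $f$ is $\mu$-a.e.\ constant.

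To finish, I exhibit an element of $L_2(S;\mathbb{H})$ that is not $\mu$-a.e.\ constant. Since $(S,\mathcal{A},\mu)$ is assumed to be a non-trivial probability space, choose $A\in\mathcal{A}$ with $0<\mu(A)<1$ and pick any nonzero $x\in\mathbb{H}$. Then $1_A\otimes x\in L_2(S;\mathbb{H})$ by the construction recalled in \eqref{7.1}, and it takes the value $x$ on $A$ and $\theta$ on $S\setminus A$, so it is not $\mu$-a.e.\ constant. Consequently $1_A\otimes x\notin\overline{\mathrm{span}}\{1_S\otimes b_n\}$, and $\{1_S\otimes b_n\}$ cannot be a basis of $L_2(S;\mathbb{H})$. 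The only subtle point is the passage from $L_2$ convergence of constant functions to an a.e.\ constant limit, which is handled cleanly by extracting an a.e.\ convergent subsequence.
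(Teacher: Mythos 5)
Your proof is correct, but it takes a genuinely different route from the paper's. For the orthonormality you and the paper do the identical computation. For the failure of completeness, the paper constructs an explicit nonzero element of the orthogonal complement: it picks $A$ with $\mu(A)=\tfrac12$, sets $G=1_A-1_{S\setminus A}$ and $f(s)=\sum_n 2^{-n}G(s)b_n$, and checks $\langle f,1_S\otimes b_m\rangle_{L_2}=2^{-m}\int_S G\,d\mu=0$ for every $m$, so a nonzero $f$ is orthogonal to the whole system. You instead identify the closed linear span of $\{1_S\otimes b_n\}$ as a set of $\mu$-a.e.\ constant functions and exhibit a non-constant element $1_A\otimes x$ outside it; this is the dual viewpoint (proper closed span rather than nontrivial orthogonal complement), and your a.e.-subsequence argument for closedness is sound (one could also note that $\|1_S\otimes v\|_{L_2}=\|v\|$ makes the constants a complete, hence closed, subspace). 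Your approach buys two things: it anticipates the paper's later observation that the subspace $D$ spanned by $\{1_S\otimes b_n\}$ consists of constant functions, and it needs only a set $A$ with $0<\mu(A)<1$ rather than one of measure exactly $\tfrac12$, so it applies to any probability space that is not essentially trivial. The paper's approach buys a concrete element of $D^\perp$. Note that both arguments (yours and the paper's) silently require the $\sigma$-algebra to be nontrivial modulo null sets --- on a trivial probability space every $L_2$ function is a.e.\ constant and $\{1_S\otimes b_n\}$ \emph{would} be a basis --- so your explicit flagging of this hypothesis is a point in your favor rather than a gap.
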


\begin{proof}
By $\mu(S) =1,$ we calculate
\begin{equation*}
	\begin{aligned}
		\langle1_S\otimes b_n,1_S\otimes b_m\rangle_{L_2}=&\int_S \langle (1_S\otimes b_n)(s),(1_S\otimes b_m)(s)\rangle d\mu(s)\\
		=& \int_S \langle b_n,b_m\rangle d\mu(s)\\
		=&\left\{ \begin{aligned}
			&1, \ &\text{if} \ n=m, \\
			&0, \ &\text{if} \ n\neq m.
		\end{aligned}\right.
	\end{aligned}
\end{equation*}
This implies that $\{1_S\otimes b_n\}$ is an orthonormal system in $L_2(S; \mathbb{H})$. Next, we show that $\{1_S\otimes b_n\}$ is not a basis of $L_2(S; \mathbb{H})$. We chose $A \in \mathcal{A}$ with $\mu(A) = \frac{1}{2}$ and define $G: S \rightarrow\mathbb{R}$ by
\begin{equation*}G(s) =\left\{ \begin{aligned}
		&1, \ &\text{for} \ s\in A , \\
		&-1,\ &\text{for} \  s\notin A,
	\end{aligned}\right. \ \text{for all}\  s \in S.
\end{equation*}
Then, we define $f\in L_2(S; \mathbb{H})$ as follows
$$\begin{aligned}
	f(s) &= \sum_{n=1}^{\infty}{{\frac{1}{2^n} G(s)}b_n}\notag\\
	&=\sum_{n=1}^{\infty}{{\frac{1}{2^n} G(s)}\left(1_S\otimes b_n\right)\left(s\right)},\ \text{for} \ s\in S.
\end{aligned}$$
It is clear that $f \neq\theta$. For any $m \in\mathbb{N}$, we have
\begin{equation*}
	\begin{aligned}
		\langle f,1_S\otimes b_m\rangle_{L_2} &= \int_S \left\langle \sum_{n=1}^\infty\frac{1}{2^n} G(s)(1_S\otimes b_n)(s),1_S(s)\otimes b_m\right\rangle d\mu(s)\\
		&=\int_S \left\langle \sum_{n=1}^\infty\frac{1}{2^n} G(s) b_n, b_m\right\rangle d\mu(s)\\
		&=\int_S \left\langle \frac{1}{2^n} G(s) b_m, b_m\right\rangle d\mu(s)\\
		&=\frac{1}{2^n}\int_S G(s)d\mu(s)\\
		&= 0.
	\end{aligned}
\end{equation*}
Since $f \neq\theta.$ This implies that $\{1_S\otimes b_n\}$ is not a basis of $L_2(S; \mathbb{H})$.
\end{proof}

Let $D$ denote the proper subspace of $L_2(S; \mathbb{H})$ expanded by the orthonormal system $\{1_S\otimes b_n\}$ in $L_2(S; \mathbb{H})$. Then, for every $g\in D,$ it enjoys the following analytic representation:
$$\begin{aligned}
	g(s)&= \sum_{n=1}^\infty\langle g,1_S\otimes b_n\rangle_{L_2}(1_S\otimes b_n)(s)\notag\\
	&=\sum_{n=1}^\infty\int_S \langle g(t),b_n\rangle d\mu(t)(1_S\otimes b_n)(s)\notag\\
	&=\sum_{n=1}^\infty\int_S \langle g(t),b_n\rangle d\mu(t) b_n, \ \text{for}\ \mu\text{-almost all}\ s \in S.
\end{aligned}\eqno (7.8)$$

\noindent{\bf Observations} (a)  The equations (7.8) implies that every element in $D$ is a constant function, it is because that every $1_S\otimes b_n$ in the orthonormal system $\{1_S\otimes b_n\}$ is a constant function;
(b) $\{1_S\otimes b_n\}$  is an orthonormal basis of $D$, which is a proper subspace of $L_2(S; \mathbb{H})$.

Notice that, in the considered Hilbertian Bochner space $L_2(S; \mathbb{H})$, the measure space $(S, \mathcal{A}, \mu)$ is a probability space. This implies that, for any $f\in L_2(S; \mathbb{H})$, $f$ is Bochner integrable on $(S, \mathcal{A}, \mu)$. That is,
$$f\in L_2(S; \mathbb{H})\Rightarrow  \int_{S}{f(t)d\mu(t)} \ \text{exists and it is in} \ \mathbb{H}.$$
Following the notations of the expected values of random variables in probability theory, for any given $f\in L_2(S; \mathbb{H})$, we write
$$E(f) = \int_{S}{f(t)d\mu(t)}\in \mathbb{H}.$$

\begin{theorem}
	Let $L_2(S; \mathbb{H})$ be a Hilbertian Bochner space, in which $(S, \mathcal{A}, \mu)$ is a probability space and the Hilbert space $\mathbb{H}$ has an orthonormal basis $\{b_n\}$. Let $D$ be the proper subspace of $L_2(S; \mathbb{H})$ with an orthonormal basis $\{1_S\otimes b_n\}$. Then, for any $f\in L_2(S; \mathbb{H}),$ we have
	\begin{equation*}
		\begin{aligned}
			P_D(f)&= 1_S\otimes E(f)\\
			&= \sum_{n=1}^\infty\langle E(f),b_n\rangle(1_S\otimes b_n)\\
			&=\sum_{n=1}^\infty\langle 1_S\otimes E(f),1_S\otimes b_n\rangle_{L_2}(1_S\otimes b_n).
		\end{aligned}
	\end{equation*}
\end{theorem}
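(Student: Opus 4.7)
The plan is to invoke the standard characterization of orthogonal projection onto a closed subspace: by Proposition 2.4 together with the basic variational principle \eqref{1.5}, an element $u\in D$ equals $P_D(f)$ if and only if $f-u\in D^{\perp}$, i.e., $\langle f-u,g\rangle_{L_2}=0$ for every $g\in D$. Since $D$ is by definition the closed subspace of $L_2(S;\mathbb{H})$ spanned by the orthonormal system $\{1_S\otimes b_n\}$, this orthogonality need only be verified against each $1_S\otimes b_n$.

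Set $u:=1_S\otimes E(f)$. First I would observe $u\in D$: the expansion $E(f)=\sum_{n=1}^{\infty}\langle E(f),b_n\rangle b_n$ in $\mathbb{H}$ lifts pointwise, via the isometric embedding $x\mapsto 1_S\otimes x$ recorded after \eqref{7.1}, to
\begin{equation*}
u=1_S\otimes E(f)=\sum_{n=1}^{\infty}\langle E(f),b_n\rangle(1_S\otimes b_n),
\end{equation*}
which simultaneously gives $u\in D$ and the second equality of the theorem.

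The core of the argument then consists of two inner-product computations. Using the fact that the Bochner integral commutes with the bounded linear functional $\langle\cdot,b_n\rangle$ on $\mathbb{H}$, one has
\begin{equation*}
\langle f,1_S\otimes b_n\rangle_{L_2}=\int_S\langle f(s),b_n\rangle\,d\mu(s)=\Big\langle\int_S f(s)\,d\mu(s),b_n\Big\rangle=\langle E(f),b_n\rangle,
\end{equation*}
while, because $\mu(S)=1$,
\begin{equation*}
\langle 1_S\otimes E(f),1_S\otimes b_n\rangle_{L_2}=\int_S\langle E(f),b_n\rangle\,d\mu(s)=\langle E(f),b_n\rangle.
\end{equation*}
Subtracting these identities yields $\langle f-u,1_S\otimes b_n\rangle_{L_2}=0$ for every $n$, whence $u=P_D(f)$; this is the first equality. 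The second displayed identity is precisely the third equality of the theorem.

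The only mildly delicate ingredient is the interchange $\bigl\langle\int_S f\,d\mu,b_n\bigr\rangle=\int_S\langle f(s),b_n\rangle\,d\mu(s)$, a classical property of the Bochner integral relative to a bounded linear functional, which is available from the references [10, 13]. Every other step is routine bookkeeping with the basis expansions \eqref{7.2}--\eqref{7.4} and the defining property of $D$.
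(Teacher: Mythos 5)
Your proof is correct and follows essentially the same route as the paper: both arguments reduce to showing $f-1_S\otimes E(f)\perp D$ via the interchange of the Bochner integral with the inner product, and then invoke the variational/orthogonality characterization of $P_D$. The only cosmetic difference is that you verify orthogonality coordinate-wise against the generators $1_S\otimes b_n$ and pass to the closed span by continuity, whereas the paper checks the variational inequality directly for an arbitrary $g\in D$ using the fact that every element of $D$ is a constant function; both computations rest on the same identity $\langle f,1_S\otimes b_n\rangle_{L_2}=\langle E(f),b_n\rangle$.
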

	
\begin{proof}
	Notice that $1_S\otimes E(f)$ is a constant functional satisfying
	$$\int_{S}{(1_S\otimes E(f))(t))d\mu(t)}= E\left(f\right).$$
	For any $g\in D$, we calculate
	$$\begin{aligned}
		&\langle f-(1_S\otimes E(f)),(1_S\otimes E(f))-g\rangle_{L_2}\notag\\
		=& \int_S \langle f(t)-(1_S\otimes E(f))(t),(1_S\otimes E(f))(t)-g(t)\rangle d\mu(t)\notag\\
		=&\left\langle\int_S f(t)-(1_S\otimes E(f))(t) d\mu(t),\int_S((1_S\otimes E(f))(t)-g(t))d\mu(t)\right\rangle\notag\\
		=&\left\langle\int_S (f(t)d\mu(t)-E(f),\int_S((1_S\otimes E(f))(t)-g(t))d\mu(t)\right\rangle\notag\\
		=&\left\langle\theta,\int_S ((1_S\otimes E(f))(t)-g(t))d\mu(t)\right\rangle\notag\\
		=&0, \ \text{for every}\ g\in D.
	\end{aligned}\eqno (7.9)$$
	By the basic variational principle of $P_D$, (7.9) implies $P_D(f) = 1_S\otimes E(f)$. Since $\{b_n\}$ is an orthonormal basis of the Hilbert space $\mathbb{H}$, for $E(f)\in \mathbb{H}$, we have
	$$E(f) =\sum_{n=1}^\infty \langle E(f),b_n\rangle b_n.$$
	By property (iv) of Bochner spaces recalled in subsection 7.1, this implies
	$$1_S\otimes E\left(f\right)=\sum_{n=1}^\infty \langle E(f),b_n\rangle (1_S\otimes b_n).$$
	It is clear that $\langle 1_S\otimes E(f),1_S\otimes b_n\rangle_{L_2}=\langle E(f),b_n\rangle$, for all $n$. It follows that
	$$\sum_{n=1}^\infty \langle E(f),b_n\rangle (1_S\otimes b_n)=\sum_{n=1}^\infty \langle 1_S\otimes E(f),1_S\otimes b_n\rangle_{L_2}(1_S\otimes b_n).$$
	This completes the proof of this theorem.
\end{proof}
	
We apply the results in Theorem 7.1 to study the directionally differentiability of the metric projection on the proper subspace $D$.
	
\begin{theorem}
Let $L_2(S; \mathbb{H})$ be a Hilbertian Bochner space, in which $(S, \mathcal{A}, \mu)$ is a probability space and the Hilbert space $\mathbb{H}$ has an orthonormal basis$\{b_n\}$. Let $D$ be the proper subspace of $L_2(S; \mathbb{H})$ with an orthonormal basis $\{1_S\otimes b_n\}$. Then, $P_D$ is directional differentiable on $L_2(S; \mathbb{H}).$
Furthermore, for any $f\in L_2(S; \mathbb{H})$, we have
$$P'_D(f)(h) = 1_S\otimes E(h),\ \text{for any}\ h\in L_2(S; \mathbb{H}) \ \text{with} \ h \neq\theta.$$
\end{theorem}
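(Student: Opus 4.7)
The plan is to exploit the explicit closed-form expression for $P_D$ supplied by Theorem 7.1 and observe that it is \emph{linear} in its argument, so that the difference quotient in the definition of the directional derivative collapses to a constant in $t$ (independent of the positive step size).

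First I would fix an arbitrary $f\in L_2(S;\mathbb{H})$ and a direction $h\in L_2(S;\mathbb{H})$ with $h\neq\theta$, and for each $t>0$ apply Theorem 7.1 to both $f+th$ and $f$, obtaining
\begin{equation*}
P_D(f+th)=1_S\otimes E(f+th),\qquad P_D(f)=1_S\otimes E(f).
\end{equation*}
The next step is to invoke linearity of the Bochner integral, which gives $E(f+th)=E(f)+tE(h)$ in $\mathbb{H}$, together with the evident $\mathbb{R}$-linearity of the map $x\mapsto 1_S\otimes x$ from $\mathbb{H}$ into $L_2(S;\mathbb{H})$ (this is immediate from the definition \eqref{7.1}, or from property (iv) of Hilbertian Bochner spaces recalled in subsection 7.1). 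Combining these two linearities yields
\begin{equation*}
P_D(f+th)=1_S\otimes E(f)+t\bigl(1_S\otimes E(h)\bigr)=P_D(f)+t\bigl(1_S\otimes E(h)\bigr)
\end{equation*}
exactly, for every $t>0$.

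Consequently the difference quotient
\begin{equation*}
\frac{P_D(f+th)-P_D(f)}{t}=1_S\otimes E(h)
\end{equation*}
is independent of $t>0$, so the limit as $t\downarrow 0$ trivially exists and equals $1_S\otimes E(h)$. Since $f$ and $h\neq\theta$ were arbitrary, this simultaneously establishes directional differentiability on all of $L_2(S;\mathbb{H})$ and the claimed formula $P'_D(f)(h)=1_S\otimes E(h)$.

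There is essentially no obstacle in this argument; the whole statement is a direct corollary of Theorem 7.1. The only point that deserves a brief verification (and which I would perhaps record as a one-line remark in the proof) is the linearity of $x\mapsto 1_S\otimes x$, but this follows at once from the pointwise definition in \eqref{7.1} together with the fact that the Bochner norm of $1_S\otimes x$ equals $\|x\|$ when $\mu(S)=1$, so no convergence or limiting issues arise.
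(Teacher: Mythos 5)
Your proposal is correct and follows essentially the same route as the paper: both apply Theorem 7.1 to $f+th$ and $f$, use linearity of the Bochner integral $E$ and of $x\mapsto 1_S\otimes x$ to write $1_S\otimes E(f+th)=1_S\otimes E(f)+t(1_S\otimes E(h))$, and observe that the difference quotient is constant in $t$, so the limit is $1_S\otimes E(h)$. Your explicit remark on the linearity of $x\mapsto 1_S\otimes x$ is a minor (and welcome) elaboration of a step the paper performs implicitly.
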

	
\begin{proof}
For arbitrary given $f\in L_2(S; \mathbb{H})$ and for any $h\in L_2(S; \mathbb{H})$ with $h \neq\theta$, by Theorem 7.1, we have
\begin{equation*}
	\begin{aligned}
		P'_D(f)(h)&=\lim_{t\downarrow0}{\frac{P_D\left(f+th\right)-P_D\left(f\right)}{t}}\\
		&= \lim_{t\downarrow0}{\frac{1_S\otimes E(f+th)-(1_S\otimes E(f))}{t}}\\
		&=\lim_{t\downarrow0}{\frac{1_S\otimes (E(f)+E(th))-(1_S\otimes E(f))}{t}}\\
		&=\lim_{t\downarrow0}{\frac{1_S\otimes E(f)+(1_S\otimes E(th))-(1_S\otimes E(f))}{t}}\\
		&=\lim_{t\downarrow0}{\frac{1_S\otimes E(th)}{t}}\\
		&=\lim_{t\downarrow0}{\frac{1_S\otimes (tE(h))}{t}}\\
		&=\lim_{t\downarrow0}{\frac{t(1_S\otimes E(h))}{t}}\\
		&=1_S\otimes E(h).
	\end{aligned}
\end{equation*}
\end{proof}

\vskip 6mm
\noindent{\bf Acknowledgments}

\noindent   The authors thank Professor Simeon Reich for his kind communications in the stage of development of this paper. Author Li Cheng is very grateful to the National Natural Science Foundation of China (12171217) and (11701246) for partially financial support for this work. Author Lishan Liu is very grateful to the National Nurture Science Foundation of China (11871302), the Natural Science Foundation of Shandong Province of China (ZR2022MA009) and  ARC Discovery Project Grant (DP230102079) for partially financial support for this work.
Author Xie thanks the National Nurture Science Foundation of China (12171217) and (11771194) for partially financial support for this work.

\end{document}